\documentclass[a4paper,10pt]{amsart}
\usepackage[utf8]{inputenc}
\usepackage{amsthm}
\usepackage{amsmath}
\usepackage{bm}
\usepackage{amsfonts}
\usepackage{amssymb}
\usepackage{mathtools}
\usepackage{appendix}
\usepackage{mathrsfs}
\usepackage{setspace}
\usepackage{xcolor}
\usepackage{todonotes}
\usepackage{thmtools} 
\usepackage[foot]{amsaddr}
\usepackage[pdfdisplaydoctitle,colorlinks,breaklinks,urlcolor=blue,linkcolor=blue,citecolor=blue]{hyperref} 


\newcommand{\C}{\mathbb{C}}

\newcommand{\F}{\mathcal{F}}
\newcommand{\G}{\mathcal{G}}
\renewcommand{\H}{\mathbb{H}}

\newcommand{\N}{\mathbb{N}}

\newcommand{\PP}{\mathbb{P}}
\newcommand{\PR}{\mathcal{P}}

\newcommand{\R}{\mathbb{R}}

\newcommand{\T}{\mathbb{T}}


\DeclareMathOperator{\var}{Var}

\DeclareMathOperator{\re}{Re}
\DeclareMathOperator{\im}{Im}
\DeclareMathOperator{\e}{e}

\DeclareMathOperator{\cov}{Cov}

\renewcommand{\epsilon}{\varepsilon}
\renewcommand{\setminus}{\smallsetminus}

\newcommand{\one}{\bm{1}}
\newcommand{\obar}[1]{\overline{#1}}

\newcommand{\de}{\partial}


\newcommand{\set}[1]{\left\{#1\right\}}
\newcommand{\pa}[1]{\left(#1\right)}
\newcommand{\bra}[1]{\left[#1\right]}
\newcommand{\abs}[1]{\left|#1\right|}
\newcommand{\norm}[1]{\left\|#1\right\|}
\newcommand{\brak}[1]{\left\langle#1\right\rangle}

\newcommand{\expt}[2][]{\mathbb{E}_{#1}\left[#2\right]}

\newtheorem{theorem}{Theorem}[section]

\newtheorem{corollary}[theorem]{Corollary}
\newtheorem{lemma}[theorem]{Lemma}
\newtheorem{proposition}[theorem]{Proposition}

\theoremstyle{remark}
\newtheorem{remark}[theorem]{Remark}

\numberwithin{equation}{section}



\title[Nonlinear Functionals of Hyperbolic Random Waves]{Nonlinear Functionals of Hyperbolic \\ Random Waves: the Wiener Chaos Approach}
\author[F. Grotto]{Francesco Grotto}
\address{Universit\`a di Pisa, Dipartimento di Matematica, 5 Largo Bruno Pontecorvo, 56127 Pisa, Italia}
\email{\href{mailto:francesco.grotto at unipi.it}{francesco.grotto at unipi.it}}
\author[G. Peccati]{Giovanni Peccati}
\address{Universit\'e du Luxembourg, Maison du Nombre, 6 Avenue de la Fonte, 4364 Esch-sur-Alzette, Luxembourg}
\email{\href{mailto:giovanni.peccati at uni.lu}{giovanni.peccati at uni.lu}}
\keywords{Random Waves, Hyperbolic Space, Wiener Chaos}
\date\today

\begin{document}
	
	\begin{abstract}
		We consider Gaussian random waves on hyperbolic spaces and establish variance asymptotics and central limit theorems for a large class of their integral functionals, both in the high-frequency and large domain limits. Our strategy of proof relies on a fine analysis of Wiener chaos expansions, which in turn requires us to analytically assess the fluctuations of integrals involving mixed moments of covariance kernels. Our results complement several recent findings on non-linear transforms of planar and arithmetic random waves, as well as of random spherical harmonics. In the particular case of 2-dimensional hyperbolic spaces, our analysis reveals an intriguing discrepancy between the high-frequency and large domain fluctuations of the so-called {\em fourth polyspectra} --- a phenomenon that has no counterpart in the Euclidean setting. We develop applications of a geometric flavor, most notably to excursion volumes and occupation densities.
	\end{abstract}
	
	\maketitle
	
	\tableofcontents
	
	\section{Introduction}\label{sec:introduction}
	
	\subsection{Overview} The aim of this paper is to initiate the study of non-linear functionals of Gaussian random waves (that is, generalized Gaussian eigenfunctions of the Laplacian) defined on hyperbolic spaces of arbitrary dimension --- with specific emphasis on variance asymptotics and central limit theorems. As put forward in the title, our approach is based on a careful analysis of {\em Wiener chaos expansions}, which we implement by using several non-trivial refinements of the general theory developed in \cite{NPTRF, Nourdin2012}, see Section \ref{sec:intfunc}. One of the main contributions of our work is the derivation of new analytic estimates for covariance kernels of hyperbolic waves (stated in Section \ref{sec:moments}), which will allow us to deal simultaneously both with the high-frequency and large domain asymptotic regimes. We will see that our findings naturally complement several recent studies of Gaussian random waves on manifolds, such as {\em Euclidean random waves} \cite{Berry1977, Berry2002, Dalmao2021, Dalmao2022, Nourdin2019, Peccati2020, Notarnicola2022}, {\em arithmetic random waves} \cite{Cammarota2019, Dalmao2019, KKW2013, Oravecz2008, Rudnick2008} and {\em random spherical harmonics} \cite{Marinucci2011,Marinucci2011a,Marinucci2011book,Marinucci2014,Marinucci2015,Marinucci2021,Rossi2019}. 
	
	\subsection{First definitions} Denote by $\H^n$, $n\geq 2$,  the $n$-dimensional hyperbolic space (that is, the simply connected manifold with constant negative sectional curvature) and let $\lambda\geq (n-1)^2/4$. The {\em hyperbolic random wave} with frequency $\lambda$, written $u_\lambda:= \{u_\lambda(x) : x\in \H^n \}$, is defined as the unique (in distribution) centered and unit variance real Gaussian field on $\H^n$ such that (i) the law of $u_\lambda$ is invariant with respect to the isometries of $\H^n$, (ii) paths of $u_\lambda$ solve a.s. the Laplace-Beltrami eigenvalue problem 
	$\Delta_{\H^n} u_\lambda+\lambda u_\lambda=0$, where $\Delta_{\H^n}$ is the hyperbolic Laplacian (see \autoref{prop:randomwave} for details). 
	
	\smallskip
	
	The random wave $u_\lambda$ is the exact hyperbolic counterpart of the {\em Euclidean random wave} $v_\lambda:= \{v_\lambda(x) : x\in \R^n \}$ (see \cite{Berry1977, Berry2002}), that one can similarly characterize as being the unique centered and unit variance real Gaussian field on $\R^n$ verifying properties (i) and (ii) above, with $\H^n$ and $\Delta_{\H^n}$ replaced, respectively, by $\R^n$ and $\Delta = -\sum_i \partial^2/\partial x_i^2$. Further remarkable examples of non-Euclidean random waves, to which $u_\lambda$ should be compared, are the already discussed random spherical harmonics and arithmetic random waves (that are, respectively, Laplace eigenfunctions on the sphere $S^n$ and on the flat torus $\T^n$), as well as the class of Gaussian {\em monochromatic random waves} on general compact manifolds \cite{Canzani2020, DNPR, Zelditch2009}.
	We also recall that hyperbolic random waves appeared in another guise in \cite{Cohen2012}, in the context of spectral decomposition of stationary
	Gaussian fields on $\H^n$ (the latter as a particular case of homogeneous space).
	
	\begin{remark} \begin{itemize}
			\item[(i)] For future reference, we recall that (as an application e.g. of \cite[Theorem 5.7.2]{Adler2007}) the above characterization of $v_\lambda$ is equivalent to requiring that, for $x,y\in \R^n$,
			\begin{multline} \label{eq:berrycov}
			\expt{v_\lambda(x)v_\lambda(y)}=C_{n,\lambda}(x,y)
			\coloneqq\frac1{\omega_{n-1}}\int_{S^{n-1}} e^{i \sqrt\lambda u \cdot (x-y)}du\\
			=\frac{(2\pi)^{n/2}}{\omega_{n-1}} \pa{\sqrt\lambda |x-y|}^{1-n/2} J_{n/2-1}(\sqrt \lambda |x-y|),
			\end{multline}
			where $\omega_{n-1}$ is the hypersurface volume of $S^{n-1}$ and $J_\nu$ is the Bessel function of order $\nu$ (see e.g. \cite[Chapter 10]{Olver2010}); we also point out that an analogous representation of the covariance of $u_\lambda$ will emerge from the statement of \autoref{prop:randomwave} below. 
			\item[(ii)] The central role played by Euclidean random waves in the probabilistic analysis of Laplace eigenfunctions is amplified by the so-called {\em Berry's random wave conjecture} --- originally formulated in \cite{Berry1977} --- according to which the unit energy random wave $v_1$ is a universal model for the high-frequency local behavior of deterministic Laplace eigenfunctions on chaotic billiards, among which negatively curved manifolds are paradigmatic examples. We refer the reader to \cite{Jain2017} for a discussion of the role of random wave models in the physical literature, and to \cite{Abert2018, Ingremeau2020} for mathematically rigorous approaches toward Berry's conjecture. See also \cite{Canzani2020, DNPR}, as well as Section \ref{ssec:curvature} below.
		\end{itemize}
	\end{remark}

	\smallskip
	
	\subsection{Motivation and background} As discussed e.g. in the survey \cite{Wigman2022} (to which we refer the reader for an exhaustive list of references), in recent years considerable attention has been devoted to {\em local geometric functionals} associated with level sets of random waves, such as {\em excursion volumes}, {\em occupation measures} and {\em volumes of level sets} --- among which {\em nodal volumes} (i.e., the Haussdorff measures of zero loci) play a pivotal role. 
	
	\smallskip
	
	A remarkable phenomenon is that in a number of crucial cases (see \cite{Cammarota2016, Marinucci2021, Marinucci2015, Marinucci2016, Nourdin2019, Peccati2018, Peccati2020, Notarnicola2022} for a sample) the study of these geometric functionals can be fruitfully reduced to the asymptotic analysis of their orthogonal projections on {\em Wiener chaoses}, as formally defined in Section \ref{sss:chaos}. Such a strategy --- which corresponds to the ``Wiener chaos approach'' advertised in the title --- is described in detail in the forthcoming Section \ref{ss:wienerchaosstrategy} and relies pervasively on the abstract theory of probabilistic approximations presented in \cite{Nourdin2012}; see also \cite{KL2001, Slud1991, Slud1994} for some earlier use of Wiener chaos in the geometric study of Euclidean Gaussian fields\footnote{ Here, an important caveat is that the covariance structure of random waves typically {\em does not} satisfy the integrability assumptions required in order to directly apply the results from \cite{KL2001, Slud1991, Slud1994}, in such a way that, for random waves, several ad hoc arguments have to be developed on a case-by-case basis.}. 
	
	The main contribution of our work consists in the first explicit application of Wiener chaos techniques to a class of integral functionals associated with non-Euclidean random waves on non-compact manifolds, thus setting the bases for the asymptotic analysis of more general geometric quantities. 

	\subsection{Main contributions} The principal focus of our paper is on integral functionals of the form
	\begin{equation*}
	\G(u_\lambda)=\int_{B_R} G(u_\lambda(x)) dm_n(x), 
	\quad B_R\subset \H^n, \quad G:\R\to\R,
	\end{equation*} 
	where $m_n$ is the hyperbolic volume, and $B_R$ is a ball of radius $R$ in the hyperbolic distance.
	Most of our efforts will be devoted to the study of those functionals $\G(u_\lambda)$ (typically called {\em polyspectra}) obtained by taking $G$ to be a Hermite polynomial of a fixed order, whose behavior is investigated in two different limiting regimes: 
	high-energy ($\lambda\to\infty$) and large domain ($R\to\infty$).
	Our main results, stated in full detail in Section \ref{ssec:clts},
	yield variance estimates and Central Limit Theorems (CLTs) for polyspectra of arbitrary orders, from which one can deduce CLTs for functionals $\G(u_\lambda)$ associated with a generic $G$. 
	
	\smallskip

	\begin{remark} Theorem \ref{thm:cltHermite} --- which is one of the main contributions of the present work --- will reveal an interesting phenomenon, namely: whereas in the high-frequency regime the asymptotic behavior of hyperbolic and Euclidean polyspectra roughly coincide, the same conclusion {\em does not hold} in the large-domain limit in the case $n=2$. In the parlance of time-series analysis, such a result seems to indicate that, unlike Euclidean random waves, hyperbolic random waves on $\H^2$ display a form of {\em short memory}, see e.g. \cite{Doukhan2003, Nourdinfbm} for an introduction to this concept.
	\end{remark}
	
	In Sections \ref{ssec:excursion} and \ref{ssec:leray}, we will apply our results to study two remarkable functionals associated with the excursions of $u_\lambda$:
	
	\begin{itemize}
		\item[(a)] the \emph{volume of the excursion set}
		\begin{equation}\label{eq:excursionset}
		m_n \pa{\set{x\in B_R:u_\lambda(x)> t}}=\int_{B_R} \one_{u_\lambda> t}(x)dm_n(x);
		\end{equation}
		
		\item[(b)] the \emph{Leray measure}
		\begin{equation}\label{eq:leraymeasure}
		L_{R,\lambda}\coloneqq \lim_{\epsilon\to 0} \frac1{2\epsilon} m_n\pa{\set{x\in B_R: |u_\lambda(x)|\leq \epsilon}},
		\end{equation}
		which can be formally understood as the integral of a generalized function, as follows:
		\begin{equation*}
		L_{R,\lambda}=\int_{B_R} \delta_0(u_\lambda(x)) dm_n(x).
		\end{equation*}
	\end{itemize}
	\begin{remark}
		
		With probability one, the nodal set of $u_\lambda$ is a submanifold of codimension $1$.
		As a consequence a -- perhaps more natural -- local functional to consider is the induced $(n-1)$-dimensional volume.
		However, a functional such as the \emph{nodal length} in dimension $n=2$,
		(formally) given by
		\begin{equation*}
		\text{length}(\set{x\in B_R: u_\lambda(x)=0})
		=\int_{B_R} \delta_0(u_\lambda(x)) \sqrt{\brak{d u_\lambda, d u_\lambda}_{T^\ast_x\H^n}} dm_n(x),
		\end{equation*}
		also involves the differential $du_\lambda$ of the random field, making its study not directly achievable by the techniques of the present paper. We prefer to regard such an issue as a separate topic and defer it to future investigations. 
	\end{remark}
	
	\subsection{Structure} In Section \ref{sec:Hd}, we recall the necessary preliminaries on 
	geometry and spectral theory of $\H^n$, and then rigorously introduce the hyperbolic random wave model. Section
	\ref{sec:intfunc} contains a discussion of our main results on functionals of random waves. Finally,
	Section \ref{sec:moments} is devoted to the technical core of the proofs.
	
	\subsection{Notation}
	We write $X\sim Y$ when random variables $X,Y$ --taking values in the same space-- have the same law. We write $N(\alpha, \beta^2)$ to indicate a Gaussian random variable with mean $\alpha\in \R$ and variance $\beta^2\geq 0$. The term \emph{distribution} will always refer to an element of the dual space of smooth functions
	on some manifold, that is a generalized function, never to the law of a random variable. Landau $O$'s and $o$'s have their usual meaning, subscripts indicating eventual dependence on parameters.
	The symbol $C$ will denote a positive constant, possibly differing in any of its occurrence
	even in the same formula, depending only on eventual subscripts.
	The expression $A\simeq_{a,b} B$ indicates that $B$ is both an upper and lower bound by $A$
	up to strictly positive multiplicative constants depending only on eventual subscripts $a,b$. 
	Expressions $A\lesssim_{a,b} B$ or $A\gtrsim_{a,b} B$ indicate respectively 
	an upper and lower bound in the same sense.
	
	\subsection{Acknowledgments} Research supported by the Luxembourg National Research Fund (Grant: {\bf O21/16236290/HDSA}).
	F. G. acknowledges support of INdAM through the INdAM-GNAMPA Project CUP\_E55F22000270001. 
	
	\section{Geometry of Hyperbolic Space and Random Waves}\label{sec:Hd}
	
	The hyperbolic space $\H^n$ is the simply connected $n$-dimensional Riemannian manifold of constant negative curvature $-1$.
	It is modeled by one sheet of the two-sheeted hyperboloid $x_0^2-x_1^2-\dots -x_n^2=1$ in $\R^{n+1}$, say $x_0>0$,
	with the Riemannian metric being induced by Minkowski metric $-dx_0^2+dx_1^2+\dots+dx_n^2$ on the ambient space $\R^{n+1}$.
	The Riemannian distance in this parametrization $\H^n\ni x=(x_0,x_1,\dots, x_n)$ is given by
	\begin{equation*}
	d(x,y)=\cosh^{-1}\bra{x,y},\quad \bra{x,y}=x_0y_0-x_1y_1-\dots-x_ny_n,\quad x,y\in\H^n.
	\end{equation*}
	We will denote by $dm_n$ the Riemannian volume on $\H^n$, or rather an arbitrarily fixed positive multiple of it,
	such choice being completely irrelevant for our goals; accordingly, we will write for simplicity $L^2(\H^n)=L^2(\H^n,dm_n)$.
	
	Besides Cartesian coordinates of the hyperboloid model, we will often employ polar (geodesic) coordinates 
	$\H^n\ni x=(r,\vartheta)$, $r=d(x,x_0)>0$, $\vartheta\in S^{n-1}$,
	around a given point $x_0\in \H^n$, in terms of which the volume element is given by
	\begin{equation*}
	dm_n(x)=c_n \sinh(r)^{n-1} dr d\varsigma_{n-1}(\vartheta)
	\end{equation*}
	with $d\varsigma_{n-1}(\vartheta)$ denoting\footnote{We prefer the graphic variant $\varsigma$ (`final sigma') since the symbol $\sigma$
		is customarily used to parametrize the Laplacian's spectrum, see the subsequent Section.}
	the volume form on the sphere $S^{n-1}$, $c_n>0$ depending on the arbitrary choice of a positive multiple of the volume $m_n$. We will also employ the usual notation $\omega_{n}=\int_{S^n}d\varsigma_{n}$,
	with $\omega_0=2$.
	
	\medskip
	
	The content of the forthcoming Sections \ref{ssec:spectral} and \ref{ssec:hyperbolicwaves} is classical: the reader is referred to \cite[Section 4]{Strichartz1989} and \cite[Section 2]{Hislop1994} for definitions, proofs, and examples.
	
	\subsection{Spectral Theory of Hyperbolic Space}\label{ssec:spectral}
	
	We will denote by $\Delta=\Delta_{\H^n}$ the Laplace-Beltrami operator on $\H^n$. Since the metric of $\H^n$ is induced by the embedding into Minkowski space, we have a convenient representation of the Laplacian on $\H^n$ in terms of the d'Alembert operator $\square=-\de^2/\de{x_0^2}+\de^2/\de{x_1^2}+\dots+\de^2/\de{x_n^2}$ on the ambient space $\R^{n+1}\supset \H^n$, that is
	\begin{equation}\label{eq:dalembert}
	\Delta_{\H^n}f=\square \left.f\pa{x/\sqrt{[x,x]}}\right|_{x\in\H^n}.
	\end{equation}
	We recall the spectral theorem on the hyperbolic space:

	\begin{theorem}\label{thm:spectrum}\cite[Example 2.11, Theorem 2.12]{Hislop1994}
		The Laplace-Beltrami operator $\Delta$ on $\H^n$, regarded as an unbounded operator on $L^2(\H^n)$ densely defined
		on smooth functions, is essentially self-adjoint; 
		its spectrum is purely absolutely continuous and given by
		\begin{equation}\label{e:parispectrum}
		\left[\pa{\frac{n-1}{2}}^2,\infty\right)=\set{\lambda=\sigma^2+\alpha^2, \, \sigma = \sigma_n =\frac{n-1}{2},\, \alpha\in \R}.
		\end{equation}
		The projection operator on the eigenspace relative to $\lambda=\sigma^2+\alpha^2$ is given by
		\begin{gather*}
		\PR_\lambda f(z)=\omega_{n-1} \rho_n(\alpha) \int_{\H^n} F_{n,\lambda}(d(x,y)) f(y)dm_n(y),\quad f\in L^2(\H^n),\\
		\rho_n(\alpha)=\frac1{(2\pi)^n} \abs{\frac{\Gamma(\sigma+i\alpha)}{\Gamma(\alpha)}}^2,
		\end{gather*}
		which is expressed in terms of the so-called \emph{spherical function} \cite[(4.3)]{Strichartz1989}
		\begin{align}
		\label{eq:Fdef1}
		F_{n,\lambda}(d(x,y))&=\frac1{\omega_{n-1}}\int_{S^{n-1}} 
		\bra{x,(1,\vartheta)}^{-\sigma+i\alpha} \bra{y,(1,\vartheta)}^{-\sigma-i\alpha}d\varsigma_{n-1}(\vartheta),\\
		\label{eq:Fdef2}
		F_{n,\lambda}(r)&=\frac{\omega_{n-2}}{\omega_{n-1}} \int_0^\pi 
		\pa{\cosh r-\sinh r \cos\theta}^{-\sigma+i\alpha} (\sin \theta)^{n-2} d\theta. 
		\end{align}
	\end{theorem}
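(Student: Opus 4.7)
The plan is to diagonalize $\Delta$ through the spherical Fourier transform attached to the symmetric space $\H^n \simeq SO_0(n,1)/SO(n)$. The spherical functions $F_{n,\lambda}$ should form a generalized eigenbasis indexed by $\alpha\geq 0$ via $\lambda = \sigma^2 + \alpha^2$, implementing a unitary isomorphism from $L^2(\H^n)$ onto a model Hilbert space on which $\Delta$ acts as multiplication by $\lambda$.

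First I would verify that $F_{n,\lambda}(d(\cdot, y))$ satisfies $\Delta F_{n,\lambda} + \lambda F_{n,\lambda} = 0$ using the ambient-space representation \eqref{eq:dalembert}: each integrand $\bra{x,(1,\vartheta)}^{-\sigma + i\alpha}$ in \eqref{eq:Fdef1} extends $0$-homogeneously to $\R^{n+1}$ as a horocyclic plane wave, and a direct computation shows that the Minkowski d'Alembertian $\square$ acts on $\bra{x,(1,\vartheta)}^{-s}$ (in the $0$-homogeneous sense near the hyperboloid) as multiplication by $s(s - n + 1)$; specialising to $s = \sigma - i\alpha$ returns $\sigma^2 + \alpha^2$. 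Essential self-adjointness of $\Delta$ is then a direct consequence of Gaffney's theorem, valid on any complete Riemannian manifold.

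The core of the argument is the spectral decomposition. Restricting to functions radial around a chosen base point $o$, the spherical Fourier transform $\widehat f(\alpha) = \int_{\H^n} F_{n,\lambda}(d(o,x)) f(x) \, dm_n(x)$ extends to a unitary isomorphism from $L^2_{\mathrm{rad}}(\H^n)$ onto $L^2(\R_+, \omega_{n-1}\rho_n(\alpha) \, d\alpha)$, under which $\Delta$ becomes multiplication by $\sigma^2 + \alpha^2$. Since $\alpha \mapsto \sigma^2 + \alpha^2$ is smooth and strictly increasing on $\R_+$, the spectrum on this subspace is exactly $[\sigma^2, \infty)$ and purely absolutely continuous. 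Non-radial functions are handled by decomposing along $K$-types via spherical harmonics on the unit sphere in $T_o \H^n$, reducing each isotypic component to an analogous radial problem. Inverting the resulting transform and isolating the spectral slice at fixed $\lambda$ yields the stated integral formula for $\PR_\lambda$.

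The main technical hurdle is identifying the Plancherel density $\rho_n(\alpha)$ in closed form. This requires sharp asymptotics of $F_{n,\lambda}(r)$ as $r \to \infty$, from which the Harish-Chandra $c$-function is extracted. Starting from \eqref{eq:Fdef2}, one can either recognise $F_{n,\lambda}$ as a Legendre-type hypergeometric function and read off the gamma-function ratios from classical connection formulae, or perform a direct stationary-phase analysis of the integral. This is the one step requiring genuine computation rather than general structural arguments; everything else is standard harmonic analysis on rank-one noncompact symmetric spaces.
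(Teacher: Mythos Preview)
The paper does not prove this statement: it is quoted as a known result from Hislop's survey \cite{Hislop1994} and Strichartz \cite{Strichartz1989}, with no proof given. So there is nothing to compare your proposal against.

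That said, your sketch is a reasonable outline of the standard argument for rank-one noncompact symmetric spaces, and it is essentially what one finds in those references. A few minor points. Your invocation of Gaffney's theorem for essential self-adjointness is fine and is the cleanest route. The passage from radial to non-radial functions can be done either via $K$-types as you suggest or, more directly in this setting, via the full Helgason--Fourier transform of \autoref{prop:fourier}, which already handles all of $L^2(\H^n)$ at once and intertwines $\Delta$ with multiplication by $\sigma^2+\alpha^2$; the projection formula then drops out of Fourier inversion and the relation between $F_{n,\lambda}$ and the plane waves $\e_n$. Your remark that the only genuinely computational step is the identification of the Plancherel density via the Harish-Chandra $c$-function is accurate, and the hypergeometric/Legendre route you mention is exactly how the paper later controls $F_{n,\lambda}$ in Section~\ref{sec:moments}.
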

	
	The projection operators $\PR_\lambda$ naturally satisfy $f(x)=\int_0^\infty \PR_{\sigma^2+\alpha^2} f(x)d\alpha$
	\cite[(4.2)]{Strichartz1989}, and the function $\rho_n(\alpha)$ is thus the \emph{spectral measure} (see \autoref{prop:fourier}).
	Spherical functions take such a name because $\psi(x,y)=F_{n,\lambda}(d(x,y))$ is the unique (real) radial solution of
	the eigenvalue problem
	\begin{equation*}
	\Delta_x \psi(x,y)=\lambda \psi(x,y), \quad \psi(x,x)=1, \quad x,y\in\H^n,
	\end{equation*}
	where $\Delta_x$ indicates an application of the Laplacian $\Delta_{\H^n}$ to the mapping $x\mapsto \psi(x,y)$.
	
	\medskip
	
	\begin{remark}
		In what follows, we will use $F_{n,\lambda}(d(x,y))$ as the covariance function of a Gaussian field.
		In fact, formulas \eqref{eq:Fdef1} and \eqref{eq:Fdef2} define positive-definite functions also
		when $\alpha=0$ and $\sigma\in (0,(n-1)/2]$ \cite[Sec. 5.3]{Cohen2012}, thus one can consider Gaussian fields with such covariance
		for this additional choice of parameters, see \cite{Cohen2012}. We leave the study of these fields open for future research.
	\end{remark}
	
	\begin{remark}[Notational]
		Throughout the paper, the parameters $\lambda,n,\sigma,\alpha$ will always be related through the relations put forward in formula \eqref{e:parispectrum}. In particular, dependence $n$ can be given in terms of $\sigma$ only and,
		and given $n$, a dependence on $\lambda$ can be given in terms of $\alpha^2$ only.
		Equation \eqref{eq:Fdef1} is the prototypical example of this situation: it is easy to observe that the right-hand side
		does not depend on the sign of $\alpha$, so overall dependence is on $\lambda$.
	\end{remark}
	
	Writing the eigenvalue problem in polar coordinates \cite[(4.6)]{Cohen2012} one readily obtains the following ODE satisfied by $F_{n,\lambda}$:
	\begin{gather}\label{eq:Fode}
	\frac{d^2 }{d r^2}F_{n,\lambda}(r)+\frac{n-1}{\tanh r}\frac{d}{d r}F_{n,\lambda}(r)+\lambda F_{n,\lambda}(r)=0,
	\quad r>0,\\ \nonumber
	F_{n,\lambda}(0)=1, \qquad F_{n,\lambda}'(0)=0.
	\end{gather}
	As we recall in Section \ref{sec:moments}, solutions of such ODE can be represented with hypergeometric
	functions; together with the integral representation \eqref{eq:Fdef2} this will allow
	us to obtain good approximations on $F_{n,\lambda}$ on which our arguments heavily rely.
	
	\subsection{Waves on Hyperbolic Spaces}\label{ssec:hyperbolicwaves}
	As recalled in the Introduction, the class of {\em Euclidean plane waves} is the collection of all exponential functions $x\mapsto e^{ix\cdot k} $, $k\in \R^d$, and that each of them trivially verifies the Laplace equation
	\begin{align}\label{e:euclaplace}
	\Delta_{\R^d} e^{ix\cdot k}=|k|^2 e^{ix\cdot k}, \quad x\in\R^d.
	\end{align}
	Euclidean plane waves are \emph{generalized eigenfunctions} of the Laplacian $\Delta_{\R^d}=-\sum^d \partial_j^2$,
	in the sense that they are smooth functions satisfying \eqref{e:euclaplace} but they do not belong to $L^2(\R^d)$
	(in which we set spectral theory). 
	Plane waves as above are indexed by $k\in \R^d$, or equivalently by their wavenumber $|k|\in\R^+$
	(indicating the relative eigenvalue $|k|^2$) and the direction of the wave $k/|k|\in S^{n-1}$.   
	
	On the hyperbolic space $\H^n$, one actually has a perfect analog of plane random waves, that is obtained (for each $n\geq 2$) by considering the smooth functions $x\mapsto \e_n(x,\alpha, u)$ derived from the following mappings on the product space $\H^n\times\R^\ast\times S^{n-1}$:
	\begin{gather*}
	\e_n:\H^n\times\R^\ast\times S^{n-1}\to \C, \quad \e_n(x,\alpha,u)=\bra{x,(1,u)}^{-\sigma +i\alpha},
	\end{gather*}
	in such a way that the following equation is satisfied:
	\begin{gather}\label{e:hyplaplace}
	\Delta_{\H^n} \e_n(x,\alpha,u)= (\sigma^2+\alpha^2)\e_n(x,\alpha,u), \quad x\in \H^n.
	\end{gather}
	Note that in \eqref{e:hyplaplace} the operator $\Delta_{\H^n}$ is applied to the variable $x$; the formula can be directly checked by applying the expression \eqref{eq:dalembert} to $\e_n(x,\alpha,u)$ and carrying through the tedious but elementary computation.
	The functions $\e_n(\cdot,\alpha, u)$ are thus generalized eigenfunctions of the Laplace-Beltrami operator $\Delta=\Delta_{\H^n}$,
	and they are parametrized by the wavenumber $\alpha\in\R$ and the ``direction'' of the wave $u\in S^{n-1}$.
	
	\begin{remark}
		The analogy with the Euclidean case is perhaps more geometrically intuitive in the case $n=2$, once one moves to the disk model
		of the hyperbolic plane: in such a setting, $\e_2$ is rewritten as an imaginary exponential involving the distance between
		the horocycle through $x$ and $u\in S^1$ and the origin, the direction $u\in S^1$ being naturally identified with
		a point of the boundary of the Poincar\'e disk (a point at infinity of the hyperbolic plane).
		We refer to \cite[Introduction]{Helgason2000} for a thorough comparison between Euclidean and hyperbolic settings.
		We also observe that the analogy with the Euclidean case carries through when considering wave equations, justifying the ``wave'' terminology. In particular, solutions of the wave equation on $\H^n$,
		\begin{equation}\label{eq:waveequation}
		\pa{\de_t^2+\Delta_{\H^n}-\sigma^2}u(x)=0,
		\end{equation}
		(see \cite{Anker2012} for a discussion of this PDE) can be written as superpositions of waves $e^{it\alpha} \e_n(x,\alpha,u)$. 
		Notice that the wave operator in the previous display
		takes into account that the spectrum of $\Delta_{\H^n}$ begins at $\sigma^2$.
	\end{remark}
	\noindent
	Just as on $\R^n$, planar waves can be used to set up Fourier analysis on $\H^n$.
	
	\begin{proposition}\label{prop:fourier}\cite[Ssec. 2.11.4]{Hislop1994}
		Given $f\in C^\infty_c(\H^n)$, define its Fourier transform as
		\begin{equation}\label{eq:Fouriertransform}
		\F f(\alpha,\vartheta)= \int_{\H^n} \e_n(x,-\alpha,\vartheta)f(x)dm_n(x),
		\quad \alpha\in [0,\infty),\,\vartheta\in S^{n-1}.
		\end{equation}
		It holds (transform inversion)
		\begin{equation}\label{eq:Fourierinversion}
		f(x)=\int_0^\infty \int_{S^{n-1}} \F f(\alpha,\vartheta) \e_n(x,\alpha,\vartheta)
		\rho_n(\alpha) d\alpha d\varsigma_{n-1}(\vartheta).
		\end{equation}
		Moreover (Plancherel formula) $\F$ extends to an isometry
		\begin{equation*}
		\F:L^2(\H^n,m_n)\to L^2([0,\infty)\times S^{n-1}, \rho_n(\alpha)d\alpha d\varsigma_{n-1}),
		\end{equation*}
		whose inverse is given by (the extension of) \eqref{eq:Fourierinversion}.
	\end{proposition}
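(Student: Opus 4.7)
The plan is to derive both the inversion formula and the Plancherel identity directly from the spectral decomposition provided by \autoref{thm:spectrum}, the bridge being a factorization of the spherical function as a superposition of generalized plane waves.

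First, I would recast the integral representation \eqref{eq:Fdef1} in the form
\begin{equation*}
F_{n,\lambda}(d(x,y))=\frac{1}{\omega_{n-1}}\int_{S^{n-1}}\e_n(x,\alpha,\vartheta)\,\e_n(y,-\alpha,\vartheta)\,d\varsigma_{n-1}(\vartheta),
\end{equation*}
using that $\bra{x,(1,\vartheta)}^{-\sigma\pm i\alpha}=\e_n(x,\pm\alpha,\vartheta)$. Substituting this factorization into the formula for $\PR_{\sigma^2+\alpha^2}f$ from \autoref{thm:spectrum}, applying Fubini to swap the $\H^n$ and $S^{n-1}$ integrals, and recognizing the inner $\H^n$-integral as $\F f(\alpha,\vartheta)$ gives the pointwise identity
\begin{equation*}
\PR_{\sigma^2+\alpha^2}f(x)=\rho_n(\alpha)\int_{S^{n-1}}\F f(\alpha,\vartheta)\,\e_n(x,\alpha,\vartheta)\,d\varsigma_{n-1}(\vartheta).
\end{equation*}
Integrating over $\alpha\in[0,\infty)$ and invoking the resolution of identity $f=\int_0^\infty\PR_{\sigma^2+\alpha^2}f\,d\alpha$ yields \eqref{eq:Fourierinversion}.

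For the Plancherel identity I would insert \eqref{eq:Fourierinversion} into $\int_{\H^n}f(x)\overline{f(x)}\,dm_n(x)$, exchange orders of integration, and use the conjugation identity $\overline{\e_n(x,\alpha,\vartheta)}=\e_n(x,-\alpha,\vartheta)$---valid because $\bra{x,(1,\vartheta)}=x_0-\vec x\cdot\vartheta$ is strictly positive on the upper sheet $\H^n$---to collapse the inner $\H^n$-integral to $\overline{\F f(\alpha,\vartheta)}$. This establishes the isometry property on $C^\infty_c(\H^n)$, which then extends by density to $L^2(\H^n)$. Surjectivity onto $L^2([0,\infty)\times S^{n-1},\rho_n d\alpha d\varsigma_{n-1})$ is equivalent to the purity of the absolutely continuous spectrum, a statement already contained in \autoref{thm:spectrum}.

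The principal obstacle is the rigorous justification of the Fubini swaps, which demands absolute integrability in the joint variables. Compact support of $f$ controls the spatial integrals, and $|\e_n(x,\alpha,\vartheta)|=\bra{x,(1,\vartheta)}^{-\sigma}$ is bounded uniformly in $\alpha$; the non-trivial decay in $\alpha$ needed for the outer integration follows by iterating the self-adjointness identity $\F(\Delta^k f)(\alpha,\vartheta)=(\sigma^2+\alpha^2)^k\F f(\alpha,\vartheta)$ (a consequence of the eigenvalue relation \eqref{e:hyplaplace}), forcing $\F f$ to be rapidly decreasing on $[0,\infty)$ for $f\in C^\infty_c(\H^n)$ and legitimizing every interchange of order.
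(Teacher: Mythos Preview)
The paper does not supply its own proof of this proposition: it is stated as a classical fact with a direct citation to \cite[Ssec.~2.11.4]{Hislop1994}, and no argument is given in the text. There is therefore nothing to compare your proposal against within the paper itself.

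That said, your route is a legitimate and economical way to recover the result from the material the paper does state. The key observation---rewriting \eqref{eq:Fdef1} as $F_{n,\lambda}(d(x,y))=\omega_{n-1}^{-1}\int_{S^{n-1}}\e_n(x,\alpha,\vartheta)\e_n(y,-\alpha,\vartheta)\,d\varsigma_{n-1}(\vartheta)$ and substituting into the projection kernel of \autoref{thm:spectrum}---is exactly how one passes from the spectral picture to the Fourier picture, and the paper itself uses this factorization later (see the proof of \autoref{prop:randomwave}). Combined with the resolution of the identity $f=\int_0^\infty\PR_{\sigma^2+\alpha^2}f\,d\alpha$, which the paper records immediately after \autoref{thm:spectrum} with its own citation to Strichartz, the inversion formula drops out as you describe. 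Your Plancherel argument and the use of $\F(\Delta^k f)=(\sigma^2+\alpha^2)^k\F f$ to justify the Fubini interchanges are standard and correct. The only point worth flagging is that invoking ``purity of the absolutely continuous spectrum'' for surjectivity is slightly informal: what one really needs is that the range of $\F$ is dense, which does follow from the completeness of the spectral resolution in \autoref{thm:spectrum}, but the identification of the target $L^2$-space with the direct integral of eigenspaces requires a line or two more than you have written.
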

	\noindent
	Spherical functions can be regarded as spherical averages of waves $\e_n$,
	\begin{equation*}
	F_{n,\lambda}(d(x,(1,0,\dots,0)))=\frac1{\omega_{n-1}}\int_{S^{n-1}} \e_n(x,\alpha,\vartheta) d\varsigma_{n-1}(\vartheta),
	\end{equation*}
	(a special case of \autoref{eq:Fdef1}) 
	thus playing the role of Bessel functions in the Euclidean case --- see \eqref{eq:berrycov}. 
	
	\subsection{Hyperbolic Random Waves}\label{ssec:hyperbrw}
	In what follows we will consider both real-valued and complex-valued random fields;
	we refer to \cite[Chapter 6]{Hida1980} for a discussion of white noise analysis in the complex setting.
	
	\begin{remark}[Real and complex white noise] \label{r:whitenoise}
		Before stating the main result of the present Section, and for the reader's convenience, we recall the definition and basic properties
		of complex white noises, in the sense of \cite{Hida1980}. Fix a finite mesure space $(X, \mathcal{F}, \mu)$ and denote by $L^2(X,\mu;\R)$ and $L^2(X,\mu;\C)$, respectively, the associated $L^2$ spaces of real- and complex-valued functions. A (real) {\em white noise} on $(X, \mathcal{F}, \mu)$ (often called an {\em isonormal Gaussian process} with intensity $\mu$ --- see e.g. \cite[Chapter 2]{Nourdin2012}) is a centred real Gaussian family of the type $U=\{U(f) : f\in L^2(X,\mu;\R)\}$ such that  
		\begin{equation*}
		\expt{U(f)U(g)}=\int_X f g\,  d\mu,\quad f,g\in L^2(X,\mu;\R);
		\end{equation*}    
		the definition of $U$ is customarily extended to all $f\in L^2(X,\mu;\C)$ by setting $U(f) := U(\re(f)) +i U(\im(f))$.    
		A {\em complex white noise}
		$W$ on a finite measure space $(X,\mu)$ is a complex Gaussian family $W = \{W(f) : f\in L^2(X,\mu;\C)\}$ having the law of $U+ i V$, where $U,V$ are i.i.d. real white noises on $(X, \mathcal{F}, \mu)$, as defined above. The following computational rules can be easily checked: for all $f,g\in L^2(X,\mu;\C)$, one has that
		\begin{gather}\label{eq:cpxwncov}
		\expt{W(f)\obar{W(g)}}=\int_X f \obar{g} \, d\mu,\quad
		\expt{W(f)W(g)}=0,\\ \label{eq:cpxwnrere}
		\expt{\re[W(f)]\re[W(g)]}=\int_X [\re(f) \re(g)+\im(f) \im(g)] d\mu,\\ \label{eq:cpxwnreim}
		\expt{\re[W(f)]\im[W(g)]}=\int_X [\re(f) \im(g)-\im(f) \re(g)] d\mu,
		\end{gather}
		and the second and third equalities continue to hold when one switches the symbols `$\re$' and `$\im$' on both sides of each equation.
	\end{remark}

	The next proposition singles out a class of stationary random fields that can be regarded as {\em canonical Gaussian Laplace eigenfunctions} on $\H^n$ --- they will constitute our main object of study.

	\begin{proposition}\label{prop:randomwave}
		Fix $\alpha\in [0,\infty)$, and set $\lambda=\sigma^2+\alpha^2$, where the constant $\sigma^2$ is the same as in \eqref{e:parispectrum}.
		\begin{itemize}
			\item[\bf (1)] There exists a unique (in law) random field $u_\lambda:\H^n\to\R$ such that
			\begin{itemize}
				\item[(i)] $u_\lambda(x)$ is a Gaussian variable $N(0,1)$ for all $x\in\H^n$;
				\item[(ii)] the law of $u_\lambda$ is invariant under isometries of $\H^n$; 
				\item[(iii)] almost all samples of $u_\lambda$ are generalized $\lambda$-eigenfunction of $\Delta_{\H^n}$
				of class $C^\infty(\H^n)$.
			\end{itemize}
			The same conclusion holds for the complex version $u_\lambda^\C:\H^n\to\C$ if at Point {\rm (i)}
			one replaces the standard Gaussian random variable $N(0,1)$ with a standard complex Gaussian variable $N_\C(0,1)$.
			\item[\bf (2)] The Gaussian random field $u_\lambda$ is equivalently characterized 
			by its mean and covariance function
			\begin{equation}\label{eq:covstrut}
			\expt{u_\lambda(x)}=0, \quad \expt{u_\lambda(x)u_\lambda(y)}=F_\lambda(d(x,y)),
			\end{equation}
			for all $x,y\in\H^n$; samples of $u_\lambda$ are of class $C^\infty(\H^n)$.
			Moreover, $\frac1{\sqrt 2}\re u_\lambda^\C$ and $\frac1{\sqrt 2}\im u_\lambda^\C$
			are two independent identically distributed real Gaussian random fields with the same law as $u_\lambda$.
			\item[\bf (3)] We have the following representation: if $W$ is a complex white noise on $(S^{n-1}, \varsigma_{n-1}) $, 
			then $u_\lambda^\C$ has the same law as the stochastic integral
			\begin{equation}\label{eq:stochint}
			u_\lambda^\C(x) \sim \int_{S^{n-1}} \e_n(x,\alpha,\vartheta) W(d\vartheta).
			\end{equation}
		\end{itemize}
	\end{proposition}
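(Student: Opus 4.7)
The plan is to prove Point (3) first by constructing $u_\lambda^\C$ as an explicit stochastic integral against a complex white noise, then to deduce the real version and obtain existence in Points (1) and (2), and finally to prove uniqueness via a covariance computation. Since $\vartheta\mapsto \e_n(x,\alpha,\vartheta)=\bra{x,(1,\vartheta)}^{-\sigma+i\alpha}$ is bounded continuous on $S^{n-1}$ for each fixed $x$, it lies in $L^2(S^{n-1},\varsigma_{n-1};\C)$ and the stochastic integral in \eqref{eq:stochint} (inserting a normalizing factor $\omega_{n-1}^{-1/2}$ to match the unit-variance convention) defines a complex Gaussian variable for each $x$. Applying \eqref{eq:cpxwncov} together with the integral representation \eqref{eq:Fdef1} of the spherical function yields $\expt{u_\lambda^\C(x)\obar{u_\lambda^\C(y)}}=F_{n,\lambda}(d(x,y))$ and $\expt{u_\lambda^\C(x) u_\lambda^\C(y)}=0$. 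Invoking \eqref{eq:cpxwnrere}--\eqref{eq:cpxwnreim} and the fact that $F_{n,\lambda}$ is real-valued (as can be seen from \eqref{eq:Fdef2}, or equivalently from the $\vartheta\mapsto -\vartheta$ symmetry in \eqref{eq:Fdef1}) then shows that $\re u_\lambda^\C/\sqrt 2$ and $\im u_\lambda^\C/\sqrt 2$ are i.i.d. centered real Gaussian fields with covariance $F_{n,\lambda}(d(\cdot,\cdot))$; either one serves as a candidate $u_\lambda$ for Point (1), and the corresponding claims of Point (2) are verified.

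Next, I verify properties (i)--(iii) for this candidate. Property (i) is the identity $F_{n,\lambda}(0)=1$, which is the initial condition in \eqref{eq:Fode}. For (ii), a centered Gaussian field is determined in law by its covariance, and $F_{n,\lambda}(d(x,y))$ is manifestly isometry invariant. For (iii), each generalized plane wave $\e_n(\cdot,\alpha,\vartheta)$ is a smooth $\lambda$-eigenfunction of $\Delta_{\H^n}$ by \eqref{e:hyplaplace}, with derivatives of all orders bounded uniformly in $\vartheta\in S^{n-1}$ on compact subsets of $\H^n$. Linearity of the stochastic integral and differentiation under the integral in the mean-square sense show that $u_\lambda$ is a mean-square smooth $\lambda$-eigenfunction; applying Kolmogorov's continuity criterion to each fixed-order derivative --- whose covariance is a smooth function of $(x,y)$ on $\H^n\times\H^n$, inherited from the smoothness of $F_{n,\lambda}$ --- upgrades this to almost-sure classical smoothness and pathwise eigenfunctions.

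For uniqueness in Point (1), let $\tilde u_\lambda$ satisfy (i)--(iii). Its law, being that of a centered Gaussian field, is determined by the covariance $K(x,y)\coloneqq \expt{\tilde u_\lambda(x)\tilde u_\lambda(y)}$. Property (ii) combined with the two-point homogeneity of $\H^n$ (the isometry group acts transitively on pairs of points at any prescribed hyperbolic distance) forces $K(x,y)=f(d(x,y))$ for some function $f$ on $[0,\infty)$. The pathwise eigenvalue equation in (iii), together with enough regularity to commute $\Delta_x$ with expectation, gives $\Delta_x K(x,y)=\lambda K(x,y)$; passing to geodesic polar coordinates centered at $y$ reduces this to the radial ODE \eqref{eq:Fode} for $f$, with $f(0)=1$ by (i) and $f'(0)=0$ by smoothness of $K$ on $\H^n\times\H^n$ at the diagonal. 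The unique solution is $F_{n,\lambda}$, hence $\tilde u_\lambda\sim u_\lambda$.

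I expect the main difficulty to lie in the uniqueness step: one must justify rigorously the transfer from the pathwise eigenvalue equation to the covariance PDE $\Delta_x K=\lambda K$ (commuting $\Delta_x$ with expectation requires an integrability or regularity argument), and one must extract $f'(0)=0$ from the smoothness of $K$ as a function on $\H^n\times\H^n$ --- not merely as a function of $d(x,y)$ --- using that a radial function extending smoothly through the origin in geodesic polar coordinates must have vanishing derivative there. Everything else is bookkeeping with the white-noise identities of Remark \ref{r:whitenoise} and the spherical-function representation \eqref{eq:Fdef1}, modulo a careful tracking of the $\omega_{n-1}$ normalization.
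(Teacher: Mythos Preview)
Your proposal is correct and follows essentially the same route as the paper: construct $u_\lambda^\C$ via the stochastic integral \eqref{eq:stochint}, compute its covariance using \eqref{eq:cpxwncov} and \eqref{eq:Fdef1}, split into independent real and imaginary parts via \eqref{eq:cpxwnrere}--\eqref{eq:cpxwnreim}, and then argue uniqueness by showing that conditions (i)--(iii) force the covariance to be the unique radial eigenfunction $F_{n,\lambda}$. You are in fact more explicit than the paper about the normalization factor $\omega_{n-1}^{-1/2}$, about the boundary condition $f'(0)=0$ in the uniqueness step, and about the pathwise smoothness via Kolmogorov's criterion (the paper simply appeals to the general principle that a smooth covariance yields smooth paths).
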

	\begin{remark} The random fields $u_\lambda^\C$ are discussed -- with different notation and from a slightly different perspective -- in \cite[Section 5.3]{Cohen2012}, to which the reader is referred for further background material. For the rest of the paper, we will refer to $u_\lambda$ and $u_\lambda^\C$, respectively, as the real and complex {\em hyperbolic random wave} with eigenvalue $\lambda$. 
	\end{remark}

	\begin{proof}[Proof of Proposition \ref{prop:randomwave}]
		It is convenient to start by \emph{defining} $u_\lambda^\C(x)$ by means of \eqref{eq:stochint} 
		and show that it satisfies the properties put forward at Point {\bf (2)}. 
		This will show in particular that the real-valued function $(x,y)\mapsto F_{n,\lambda}(d(x,y))$ is positive definite for all $\lambda \in [\sigma^2,\infty)$,
		so that \eqref{eq:covstrut} uniquely identifies the law of a real Gaussian random field. To prove that the properties at Point {\bf (2)} are met by the random field in \eqref{eq:stochint}, we start by observing that the Gaussianity of the stochastic integral is trivial, and so is the fact that
		\begin{equation*}
		\expt{\int_{S^{n-1}} \e_n(x,\alpha,\vartheta) W(d\vartheta)}=0
		\end{equation*} 
		for all $x\in\H^n$ and $\alpha\geq 0$. As for the covariance,
		we deduce from \eqref{eq:cpxwncov} that
		\begin{multline*}
		\expt{\int_{S^{n-1}} \e_n(x,\alpha,\vartheta) W(d\vartheta)
			\overline{\int_{S^{n-1}} \e_n(y,\alpha,\vartheta) W(d\vartheta)}}\\
		=\int_{S^{n-1}} \e_n(x,\alpha,\vartheta)\e_n(y,-\alpha,\vartheta)d\varsigma_{n-1}(\vartheta),
		\end{multline*}
		which by \eqref{eq:Fdef1} equals $F_{n,\lambda}(d(z,w))$.
		Moreover, \eqref{eq:cpxwnrere} and \eqref{eq:cpxwnreim},
		combined with the fact that (by definition) $\re [\e_n(x,\alpha,\vartheta)]=\re [\e_n(x,-\alpha,\vartheta)]$ and $\im [\e_n(x,\alpha,\vartheta)]=-\im [\e_n(x,-\alpha,\vartheta)]$,
		show that $\frac1{\sqrt 2}\re u_\lambda^\C$ and $\frac1{\sqrt 2}\im u_\lambda^\C$
		are two i.i.d. centered real Gaussian random fields with covariance function
		$F_{n,\lambda}(d(\cdot,\cdot))$. This shows that \eqref{eq:stochint} satisfies the properties at Point {\bf (2)} (note that $u_\lambda$ and $u_\lambda^\C$ have paths of class $C^\infty$ because the covariance function of these fields
		is of class $C^\infty$: this implication is proved e.g. in \cite[Subsection A.9]{NS2016} for Gaussian fields on Euclidean spaces, and it is straightforwardly adapted to the hyperbolic setting after composition with a (smooth, global) chart of $\H^n$. The proof of the Theorem is concluded if we show the equivalence of \eqref{eq:covstrut}
		and of the properties listed at Point {\bf (1)}. Let us assume that $u_\lambda$ verifies the properties at Point {\bf (1)}. Then, by invariance under isometries (and since the isometry group of $\H^n$ acts transitively), the covariance function
		\begin{equation}
		C(z,w)=\expt{u_\lambda(z)u_\lambda(w)}=f(d(z,w))
		\end{equation}
		only depends on the distance $d(z,w)$. Since the samples of $u_\lambda$ are smooth generalized eigenfunctions,
		we then deduce that
		\begin{equation}
		\Delta_{\H^n}C(z,w)=\expt{\Delta_{\H^n}u_\lambda(z)u_\lambda(w)}
		=\lambda \expt{u_\lambda(z)u_\lambda(w)}=\lambda C(z,w),
		\end{equation}
		and from the discussion in Subsection \ref{ssec:spectral}, we conclude that $C(z,w)=F_\lambda(d(z,w))$. The proof that \eqref{eq:covstrut} implies the conditions at Point {\bf (1)} follows from similar arguments, both in the real- and complex-valued cases. \end{proof}
	
	To further the analogy with Berry's random waves on $\R^n$, one can also derive
	the random field $u_\lambda$ with a Central Limit result for a superposition
	of finitely many generalized eigenfunctions of $\Delta_{\H^n}$.
	
	\begin{proposition}
		Let $\alpha\in [0,\infty)$, $\lambda=\sigma^2+\alpha^2$ be fixed, 
		and consider two independent sequences of i.i.d. uniform random
		variables $\vartheta_1,\vartheta_2,\dots$ on $S^{n-1}$ and $\phi_1,\phi_2,\dots$ on $[0,2\pi]$.
		Define the following finite combination of hyperbolic waves
		\begin{equation}
		u^N_\lambda(z)=\frac1{\sqrt N} \sum_{j=1}^N e^{i\phi_j} \e_n(x,\alpha,\vartheta_j),
		\end{equation}
		to be regarded as a random element of $C^\infty(\H^n;\C)$.
		As $N\to \infty$, finite-dimensional distributions of $u^N_\lambda$ converge in law 
		to the ones of $u_\lambda^\C$.
	\end{proposition}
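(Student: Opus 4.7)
The plan is to apply a standard multivariate central limit theorem to i.i.d.\ complex random vectors. Fix points $x_1,\dots,x_k\in\H^n$ and set
\begin{equation*}
Y_j = \bigl( e^{i\phi_j}\e_n(x_1,\alpha,\vartheta_j),\dots,e^{i\phi_j}\e_n(x_k,\alpha,\vartheta_j) \bigr)\in\C^k,\qquad j\geq 1,
\end{equation*}
so that $(u^N_\lambda(x_1),\dots,u^N_\lambda(x_k))=N^{-1/2}\sum_{j=1}^N Y_j$. The $Y_j$ are i.i.d., and since $|\e_n(x,\alpha,\vartheta)|=[x,(1,\vartheta)]^{-\sigma}$ is a bounded continuous function of $\vartheta\in S^{n-1}$ for each fixed $x$ (with $[x,(1,\vartheta)]>0$ uniformly by Cauchy--Schwarz in Minkowski signature), while $|e^{i\phi_j}|=1$, the vectors $Y_j$ are uniformly bounded in $\C^k$; any moment condition needed for a CLT is thus trivially satisfied.

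Next I would compute the first and second moments of $Y_j$ so as to identify the limiting Gaussian law. Independence of $\phi_j$ and $\vartheta_j$ together with $\EE[e^{i\phi_j}]=0$ gives $\EE[Y_j]=0$. The identities $\EE[e^{i\phi_j}e^{-i\phi_j}]=1$ and $\EE[e^{2i\phi_j}]=0$, combined with $\overline{\e_n(y,\alpha,\vartheta)}=\e_n(y,-\alpha,\vartheta)$ and \eqref{eq:Fdef1}, yield
\begin{equation*}
\EE[(Y_j)_i\,\overline{(Y_j)_l}]=\frac{1}{\omega_{n-1}}\int_{S^{n-1}}\e_n(x_i,\alpha,\vartheta)\,\e_n(x_l,-\alpha,\vartheta)\,d\varsigma_{n-1}(\vartheta)=F_{n,\lambda}(d(x_i,x_l)),
\end{equation*}
while $\EE[(Y_j)_i(Y_j)_l]=0$. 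This matches precisely the Hermitian and pseudo-covariance structure of $u_\lambda^\C$ provided by \autoref{prop:randomwave} via \eqref{eq:cpxwncov}.

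With uniformly bounded i.i.d.\ summands and the limiting covariance structure explicitly identified, I would conclude by regarding each $Y_j$ as a real random vector in $\R^{2k}$ and invoking the classical multivariate CLT: the rescaled sum $N^{-1/2}\sum_{j=1}^N Y_j$ converges in law to a centered Gaussian vector in $\R^{2k}$ whose covariance matrix is dictated by the computations above. Relations \eqref{eq:cpxwnrere}--\eqref{eq:cpxwnreim} then identify this limit as the law of $(u_\lambda^\C(x_1),\dots,u_\lambda^\C(x_k))$, since the stochastic integral \eqref{eq:stochint} has the same Hermitian covariance and the same vanishing pseudo-covariance. No substantive obstacle is anticipated; the only mildly delicate point is to carry along both covariances $\EE[Z\overline{W}]$ and $\EE[ZW]$ so that the correct circularly symmetric complex Gaussian structure (rather than a more general real Gaussian on $\R^{2k}$) is transmitted to the limit.
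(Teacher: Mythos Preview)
Your argument is correct. You take a more elementary and more direct route than the paper: you fix the evaluation points from the outset, form bounded i.i.d.\ vectors $Y_j\in\C^k$, compute their Hermitian and pseudo-covariances, and invoke the classical multivariate CLT in $\R^{2k}$. The paper instead works upstream of the evaluation map: it regards $N^{-1/2}\sum_j e^{i\phi_j}\delta_{\vartheta_j}$ as an i.i.d.\ sum of random elements in a negative-order Sobolev space $H^s(S^{n-1};\C)$, applies a CLT for i.i.d.\ variables in Hilbert spaces to obtain convergence to complex white noise $W$ on $S^{n-1}$, and then pairs with the smooth functions $\e_n(x,\alpha,\cdot)$ to recover the finite-dimensional statement via the representation \eqref{eq:stochint}. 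Your approach is shorter and needs only the finite-dimensional CLT; the paper's approach proves a slightly stronger intermediate result (convergence of the random generalized function itself) but at the cost of invoking a Hilbert-space CLT and the Sobolev embedding for point masses. For the statement as phrased, your proof is fully adequate and arguably preferable.
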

	
	A real analogue of the latter can be obtained by taking the real (or imaginary) part of all involved objects.
	Notice how, in sight of Subsection \ref{ssec:spectral}, 
	this result represents $u_\lambda$ as a stochastic superposition
	of single waves solving \eqref{eq:waveequation} with wavenumber $\alpha$.
	
	\begin{proof}
		For fixed $x\in \H^n$, $u^N_\lambda(z)$ can be regarded as the duality coupling between the smooth function
		$\e_n(x,\alpha,\cdot)\in C^\infty(S^{n-1};\C)$ and the generalized function
		\begin{equation*}
		\frac1{\sqrt N} \sum_{j=1}^N e^{i\phi_j}\delta_{b_j}(\cdot)\in C^\infty(S^{n-1};\C)^\ast.
		\end{equation*}
		Since generalized functions $e^{i\phi_j}\delta_{b_j}$ can be regarded as i.i.d. random elements of the Sobolev space
		$H^s(S^{n-1};\C)$ for $s<-n/2$, the Central Limit Theorem for i.i.d. variables in Hilbert spaces applies
		(\emph{cf.} \cite[10.1]{Ledoux1991}),
		and the sum in display converges in law to complex white noise $W$ on $S^{n-1}$.
		The thesis then follows by \autoref{prop:randomwave} considering couplings with $\e_n(x,\alpha,\cdot)$
		at finitely many distinct points $x$.
	\end{proof}
	
	\subsection{Curvature, Large Scale and Local Behavior of Random Waves}\label{ssec:curvature}
	
	As already discussed, the principal aim of the present paper is to characterize the fluctuations of integral functionals of the hyperbolic waves $\{u_\lambda\}$, as defined in the previous Subsection \ref{ssec:hyperbrw}, both as $\lambda\to \infty$ on a fixed domain ({\em high-frequency limit}), and for fixed $\lambda$ on expanding domains ({\em large domain limit}). Our main achievements on the matter are discussed in full detail in the forthcoming Section \ref{sec:intfunc}: in particular, our findings will show some remarkable discrepancies between the large domain behaviours of hyperbolic and Euclidean polyspectra.
	
	In order to develop some basic intuition on the relation between hyperbolic and Euclidean settings, in Proposition \ref{prop:localbehavior} we will characterize the local behaviour of hyperbolic random waves around a fixed point -- that we will encode in terms of the scaling limit of the associated pullback waves on tangent spaces. Some preliminary considerations are, however, in order.
	
	\subsubsection{Remarks on scaling limits}

	We start by pointing out a fundamental difference between the hyperbolic and Euclidean settings, that is: {\em in the hyperbolic framework} -- and differently from the Euclidean one -- {\it there is no direct relation linking high-frequency and large distance limits}. 
	
	To see this, fix $\lambda>0$  and recall the definition of the Euclidean random waves $\{v_\lambda\}$ introduced in \eqref{eq:berrycov}. Trivially, the fact that $C_{n,\lambda}(x,y)$ is a function of $\sqrt\lambda |x-y|$ makes it so that for Euclidean random waves it is equivalent to consider limits at high frequency (for a fixed distance) and at large distance (at a fixed frequency). 
	We will see that this is \emph{not} the case for (functionals of) hyperbolic waves.
	
	Indeed, the counterpart of scaling lengths on a Euclidean space is to consider a positive multiple of the metric tensor on a Riemannian manifold. Namely, if $M=(M,g)$ is a Riemannian manifold we set $M_R=(M,R^2g)$, $R>0$, a transformation that amounts to
	multiply all distances by $R$: if $x,y\in M$, $d_M(x,y)=r$, then $d_{M_R}(x,y)=Rr$. Under this transformation, eigenvalues of Laplace-Beltrami operator are scaled by a factor $1/R^2$.
	
	In the Euclidean case $M=\R^n$, if $\phi^R_\lambda(x,y)=\phi^R_\lambda(|x-y|)$ is the unique radial solution of
	\begin{equation*}
	\Delta_R \phi^R_\lambda(x,y)=\lambda \phi^R_\lambda(x,y), \quad \phi^R_\lambda(x,x)=1,
	\end{equation*}
	where $\Delta_R=\frac1{R^2}\Delta$ is the Laplace operator on $\R^n_R$, then
	\begin{equation}\label{eq:scalings}
	\phi^R_\lambda(|x-y|)=\phi^1_{R^2\lambda}(|x-y|)=C_{R^2\lambda}(x,y)=\phi^1_\lambda(R|x-y|),
	\end{equation}
	where the last equality is a consequence of the particular form of spherical functions on flat space.
	
	Consider now the hyperbolic case: a crucial difference is that $\R^n_R, R>0,$ are all isometric,
	whereas $\H^n_R$ has sectional curvature $-1/R^2$. Looking at spherical functions, 
	if $\psi^R_\lambda(x,y)=\psi^R_\lambda(d(x,y))$ is the unique radial solution of
	\begin{equation*}
	\Delta_{\H^n_R} \psi^R_\lambda(x,y) = \lambda \psi^R_\lambda(x,y), \quad \psi^R_\lambda(x,x)=1,
	\end{equation*}
	then the first equation of \eqref{eq:scalings} still holds,
	\begin{equation*}
	\psi^R_\lambda(d(x,y))=\psi^1_{R^2\lambda}(d(x,y))
	\end{equation*}
	it being a general fact (notice that $d(x,y)$ is the distance of $\H^n=\H^n_1$, not the rescaled one).
	However, in sight of the last display and the previous paragraphs, we can write
	\begin{align*}
	\psi^R_\lambda(x,y)&=F_{n,R^2\lambda}(d(x,y))\\
	&=	C_n \int_0^\pi \pa{\cosh d(x,y)-\sinh d(x,y) \cos\theta}^{-\sigma+iR\sqrt{\lambda-\sigma^2/R^2}} (\sin \theta)^{n-2} d\theta, 
	\end{align*}
	in terms of the function $F_{n,\lambda}$ defined in \eqref{eq:Fdef2}, which makes it clear that
	\begin{equation*}
	\psi^R_\lambda(d(x,y))=\psi^1_{R^2\lambda}(d(x,y))\neq \psi^1_\lambda(Rd(x,y)),
	\end{equation*}
	marking the difference with the Euclidean case.
	
	\subsubsection{A local scaling limit result} In the light of the above discussion, 
	a natural question is whether the local behavior of the hyperbolic waves $u_\lambda$
	around a given point resembles that of Berry's model at high frequencies.
	This turns out to be the case --- at least from the standpoint of covariance functions.
	Since the two models are defined on different manifolds, such a statement
	is made precise by comparing the planar random wave on $\R^n$ with covariance function
	as in \eqref{eq:berrycov} and frequency $\lambda=1$, 
	and a properly rescaled pullback of $u_\lambda$ to the tangent space
	(at a given point $x\in \H^n$) given by the exponential map.
	
	\begin{proposition}\label{prop:localbehavior}
		Let $\alpha\geq0$ be fixed, set $\lambda=\sigma^2+\alpha^2$ and fix $x\in\H^n$.
		Consider the covariance function 
		of the pullback random wave $u_\lambda(\exp_x(\cdot/\sqrt\lambda))$ on $T_x\H^n\simeq \R^n$,
		\begin{equation*}
		C_{n,\lambda}^H(u,v)=F_\lambda\pa{d\pa{\exp_x \frac{v}{\sqrt\lambda},\exp_x \frac{v'}{\sqrt\lambda}}}, \quad v,v'\in\R^n.
		\end{equation*}
		Let $r_\lambda=o(\sqrt \lambda)$ as $\lambda\to \infty$. Then,
		recalling that $C_{n,\lambda}$ denotes the covariance of Berry's Euclidean model \eqref{eq:berrycov}, one has that
		\begin{equation*}
		\sup_{v,v'\in\R^2: |v|,|v'|< r_\lambda} \abs{C_{n,\lambda}^H(v,v')- C_{n,1}(v,v')}=o(1),
		\quad \lambda\to\infty.
		\end{equation*}
	\end{proposition}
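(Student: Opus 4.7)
The plan is to combine the integral representation \eqref{eq:Fdef2} of the spherical function with the hyperbolic law of cosines and a Taylor expansion in the small-distance regime. First I would compute the pulled-back distance explicitly. Writing $\rho := d(\exp_x(v/\sqrt\lambda),\exp_x(v'/\sqrt\lambda))$ and letting $\phi$ denote the angle between $v$ and $v'$ in $T_x\H^n$, the hyperboloid's law of cosines gives
\[
\cosh\rho = \cosh(|v|/\sqrt\lambda)\cosh(|v'|/\sqrt\lambda) - \sinh(|v|/\sqrt\lambda)\sinh(|v'|/\sqrt\lambda)\cos\phi.
\]
Since $|v|,|v'|\leq r_\lambda = o(\sqrt\lambda)$, both arguments are $o(1)$, and Taylor expansion yields $\rho^2 = |v-v'|^2/\lambda + O(r_\lambda^4/\lambda^2)$ uniformly. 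Combined with $\alpha/\sqrt\lambda \to 1$, this gives $\alpha\rho = |v-v'| + o(1)$ uniformly over $|v|,|v'|<r_\lambda$.

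Next I would plug $r = \rho$ into \eqref{eq:Fdef2} and expand the integrand for small $\rho$. Factoring $(\cosh\rho - \sinh\rho\cos\theta)^{-\sigma+i\alpha} = (\cosh\rho - \sinh\rho\cos\theta)^{-\sigma}\, e^{i\alpha\log(\cosh\rho-\sinh\rho\cos\theta)}$, the first factor tends to $1$ uniformly (since $\rho \to 0$ and $\sigma$ is fixed), while a direct Taylor expansion of the logarithm yields
\[
i\alpha\log(\cosh\rho - \sinh\rho\cos\theta) = -i\alpha\rho\cos\theta + \tfrac{i}{2}\alpha\rho^2\sin^2\theta + O(\alpha\rho^3).
\]
The leading term converges to $-i|v-v'|\cos\theta$ by the previous step, while the correction terms are of order $r_\lambda^2/\sqrt\lambda$ and $r_\lambda^3/\lambda$ respectively, vanishing uniformly under a suitable decay assumption on $r_\lambda$. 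Dominated convergence against the integrable weight $(\sin\theta)^{n-2}$ then yields
\[
F_{n,\lambda}(\rho) \to \frac{\omega_{n-2}}{\omega_{n-1}}\int_0^\pi e^{-i|v-v'|\cos\theta}(\sin\theta)^{n-2}\, d\theta = C_{n,1}(v,v'),
\]
where the last equality is \eqref{eq:berrycov} rewritten in polar coordinates (using $\theta\mapsto\pi-\theta$ to flip the sign of the phase).

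The main obstacle is making the error estimates uniform over $|v|,|v'|<r_\lambda$: the phase correction $\tfrac{i}{2}\alpha\rho^2\sin^2\theta$ has magnitude of order $r_\lambda^2/\sqrt\lambda$, which is the bottleneck that forces a quantitative decay condition on $r_\lambda$. A cleaner route that side-steps part of this bookkeeping is to pass through the ODE \eqref{eq:Fode}: the substitution $G(t) := F_{n,\lambda}(t/\sqrt\lambda)$ transforms \eqref{eq:Fode} into
\[
G''(t) + \frac{n-1}{\sqrt\lambda\,\tanh(t/\sqrt\lambda)}\, G'(t) + G(t) = 0,
\]
whose coefficient converges uniformly on compact intervals to $(n-1)/t$. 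The limiting equation $G''(t) + \tfrac{n-1}{t}G'(t) + G(t) = 0$ admits the Bessel combination from \eqref{eq:berrycov} as its unique solution with $G(0)=1$ and $G'(0)=0$, and continuous dependence of ODE solutions on coefficients gives uniform convergence of $G$ on compact $t$-intervals, which can be extended to $t\in[0,2r_\lambda]$ provided the coefficient convergence is quantitative enough.
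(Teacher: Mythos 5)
Your computation of the pulled-back distance via the hyperbolic law of cosines is exactly the paper's first step, and your identification of the limiting integral with $C_{n,1}(v,v')$ is correct. The gap is in the second half. The paper does not Taylor-expand the integral representation \eqref{eq:Fdef2}: it invokes item (3) of Lemma \ref{lem:Fasymp}, i.e.\ the approximation \eqref{eq:Fasympclose} of $F_{n,\lambda}(r)$ by a Bessel profile with a \emph{multiplicative} error $1+O_n(1/|\alpha|)$ that is \emph{uniform in} $r>0$ (resting on the uniform Legendre--Bessel asymptotic \eqref{eq:legendreandbessel}). That uniformity is precisely what allows the conclusion over the whole range $|v|,|v'|<r_\lambda$ with $r_\lambda=o(\sqrt\lambda)$. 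Your direct expansion of the phase produces the additive correction $\tfrac12\alpha\rho^2\sin^2\theta$, which for $\rho\simeq|v-v'|/\sqrt\lambda$ and $|v-v'|\simeq r_\lambda$ has size $r_\lambda^2/\sqrt\lambda$; this is $o(1)$ only when $r_\lambda=o(\lambda^{1/4})$, and outside that range the unimodular factor $e^{i\alpha\rho^2\sin^2\theta/2}$ is not negligible in the integral. So your first route, as you yourself note, proves a strictly weaker statement than the one claimed.

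Your ODE alternative is the more promising repair --- the rescaled equation, its formal limit, and the identification of the limiting solution with the Berry profile are all correct --- but ``continuous dependence on coefficients'' only yields uniformity on \emph{compact} $t$-intervals, whereas here $t=\sqrt\lambda\,\rho$ ranges over $[0,2r_\lambda]$, an interval of diverging length. Closing this requires an explicit perturbation estimate (e.g.\ putting both equations in Liouville normal form, as is done in the proof of Lemma \ref{lem:Fasymp}(1), and running a variation-of-constants/Gr\"onwall argument to show the accumulated error is $O(r_\lambda^2/\lambda)$); this quantitative step is essentially the content of \eqref{eq:Fasympclose} and it is exactly what your sketch leaves unproven. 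As written, the proposal therefore establishes the proposition only for $r_\lambda=o(\lambda^{1/4})$, not for all $r_\lambda=o(\sqrt\lambda)$.
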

	
	\medskip
	
	We refer to the forthcoming Subsection \ref{ssec:rasymptotic} for a proof. The content of Proposition \ref{prop:localbehavior} mirrors the characterisation of the local behaviour of {\em monochromatic random waves} on compact Riemannian manifolds --- see e.g. \cite{Canzani2020, DNPR} and the references therein --- with the important difference that, in the high-frequency limit, the covariance function of monochromatic random waves locally converge to $C_{n,1}$ in any $C^k$ topology.  Although we did not check the details, it is plausible that a similar result could be achieved also in our setting, by exploiting an asymptotic expansion similar to the one used in our proof of Proposition \ref{prop:localbehavior}. Plainly, if this result was available, one could directly implement coupling techniques analogous to the ones developed in \cite{DNPR}, and deduce {\em small scale CLTs} for geometric functionals of hyperbolic waves. 
	
	\section{Integral Functionals: Wiener Chaos Expansion and CLTs}\label{sec:intfunc}
	
	It is a standard fact (see e.g. \cite[Chapter 2]{Nourdin2012}) that square-integrable random variables of the form $G(u_\lambda(x))$, where $u_\lambda(x)\sim N(0,1)$,
	can be decomposed into a converging series of random elements with the form $H_q(u_\lambda(x))$, where $\{H_q : q\geq 0\}$ denotes the sequence of uni-variate {\it Hermite polynomials}  (see Section \ref{sss:chaos} for more details). The aim of the present Section is thus to characterize the asymptotic behaviour of integral functionals of the form
	\begin{equation}\label{e:gul}
	\G_R(u_\lambda):=\int_{B_R} G(u_\lambda(x))dm_n(x)
	\end{equation}
	(in both regimes $\lambda\to\infty$ and $R\to \infty$, $B_R\subset \H^n$ being a hyperbolic ball of radius $R$) by first studying the normal approximation of the \emph{polyspectra}
	\begin{equation}\label{eq:polyspectra}
	h^{n,q}_{R,\lambda}\coloneqq \int_{B_R} H_q(u_\lambda(x))dm_n(x), \quad q\geq 1.
	\end{equation}
	We will see in Section \ref{ssec:leray} that our results allow one to deal even with cases
	in which $G$ is not a proper function, but a distribution on $\R$.
	
	\begin{remark}\label{r:wass} We will characterize the fluctuations of random variables by using a specific probabilistic distance. More precisely, given two integrable random variables $X,Y$, we define the {\it 1-Wasserstein distance} between the laws of $X$ and $Y$ as 
		$$
		{\bf W}_1(X,Y) := \sup_h \Big|\,  \expt{h(X)} - \expt{h(Y)} \Big|,
		$$
		where the supremum runs over all 1-Lipschitz functions $h : \R\to \R$. See e.g. \cite[Appendix C]{Nourdin2012}, and the references therein, for some basic results about the distance ${\bf W}_1$. The following facts can be easily checked:

		\medskip
		
		\begin{itemize}
			\item[\bf (a)] if ${\bf W}_1(X_k,Y) \to 0$, as $k\to \infty$, then $X_k$ converges in distribution to $Y$;
			
			\medskip
			
			\item[\bf (b)] For any $X,Y$ integrable and $b>0$,
			\begin{equation}\label{e:wassscale}
			{\bf W}_1(X,Y) = b \cdot {\bf W}_1(X/b, Y/b).
			\end{equation}

			\medskip
			
			\item[\bf (c)] let $X_k$, $k\geq 1$, be a centered and square-integrable random variable such that $ a\, c^2_k \leq \var{X_k} \leq b\, c^2_k $, for some strictly positive sequence $\{c^2_k\}$ and constants $a,b>0$, and let $N_k$ denote a centered Gaussian random variable with variance $\var{X_k} / c^2_k$; then if 
			\begin{equation}
			\label{e:conwass1}
			{\bf W}_1(X_k / c_k , N_k) \to 0,
			\end{equation}
			for every subsequence $k(n)\to \infty$ there exists a sub-subsequence $k({n'})$ such that $\var{X_{k({n'})}} / c^2_{k(n')}$ converges and
			\begin{equation}
			\label{e:conwass2}
			\frac{X_{k(n')}}{c_{k(n')}} \stackrel{{\rm Law}}{\Longrightarrow} N(0, c^2),
			\end{equation}
			where $c^2 := \lim_{n'} c^{-2}_{k(n')}\var{X_{k({n'})}} >0$. 
		\end{itemize}
		The content of Point {\bf (c)} amplifies the relevance of the forthcoming Theorem \ref{thm:fourthmoment}.
	\end{remark}
	
	\subsection{Some elements of Gaussian analysis} \label{ssec:polyspectra} 
	
	\subsubsection{Representation of hyperbolic waves} Let the notation and assumptions of the previous Sections prevail. Then, for all $n\geq 1$ and $\lambda\in \left[\pa{\frac{n-1}{2}}^2,\infty\right)$ (see \eqref{e:parispectrum}), one has that there exists a finite measure space $(X,\F,\mu)$, a real white noise $W$ on $(X,\mu)$ (as defined in Remark \ref{r:whitenoise}) and an integral kernel $K_{n,\lambda}:\H^n\times X \to \R$ such that the random field
	\begin{equation}\label{e:wnrep}
	\H^n \ni x\mapsto I_1(K_{n,\lambda}(x,\cdot)):= \int_{X} K_{n,\lambda}(x,y) W(dy),
	\end{equation}
	has the same law as $\{u_\lambda(x) : x\in \H^n\}$. A representation of the type \eqref{e:wnrep} can be deduced from Point {\bf (3)} of Proposition \ref{prop:randomwave} (in which case, $X = S^{n-1}$ and $\mu$ is the uniform measure). In general, we stress that (i) the representation \eqref{e:wnrep} is not unique, (ii) the validity of \eqref{e:wnrep} is a standard consequence of the fact that each random field $\{u_\lambda\}$ is separable, and (iii) the space $(X,\F,\mu)$ and the random measure $W$ can be chosen to be the same for each $n$. 
	
	\smallskip
	
	{\it Without loss of generality, from now on we will assume that representation \eqref{e:wnrep} is in order, and that the real white noise $W$ on $(X,\mu)$ is defined on a probability space $(\Omega, \mathcal{G}, \mathbb{P})$ in such a way that $W$ is the same for each $n$ and $\mathcal{G}$ is the canonical completion of $\sigma(W)$.}
	
	\subsubsection{Wiener chaos}\label{sss:chaos} The following basic elements of Gaussian stochastic analysis will be used for the rest of the paper --- see \cite{Janson1997, Nourdin2012} for a full discussion. For every $f\in L^2(X,\F,\mu)$, the stochastic integral $I_1(f) := \int_X f \, dW$ is well-defined, and the class $\{I_1(f) : f\in L^2(X,\F,\mu)\}$ is a centered Gaussian family (known as the {\it first Wiener chaos} of $W$) with covariance given by the relation: for all $f,g\in L^2(X,\F,\mu):= L^2(\mu)$, $\expt{I_1(f)I_1(g)}=\brak{f,g}_{L^2(\mu)} $. 
	
	\smallskip
	
	It is a standard fact that the space $L^2(\PP):=L^2(\Omega,\G,\PP)$ admits the {\it Wiener chaotic decomposition}
	\begin{equation}\label{e:wienerchaos}
	L^2(\PP)=\bigoplus_{q=0}^\infty H^{:q:}, \quad
	H^{:0:}:=\R,\,\,\, H^{:q:}:=\set{I^q(f):\, f\in L^2_{\rm sym}(\mu^q)}, \, \,\, q\geq 1,
	\end{equation}
	where the symbol $L^2_{\rm sym}(\mu^q)$ stands for the Hilbert subspace of $L^2(X^q, \mathcal{F}^{\otimes q}, \mu^q) : = L^2(\mu^q)$ composed of (the equivalence classes of) those kernels $f$ that are $\mu^q$-almost everywhere symmetric, and $I^q(f)$ denotes the $q$-fold stochastic integral of $f$ with respect to $W$. For $q\geq 1$, $H^{:q:}$ is called the $q$th {\it Wiener chaos} of $W$, and one has the isometric relation: $\mathbb{E}[I_q(f) I_p(g)] ={\bf 1}_{p=q} \, q! \brak{f,g}_{L^2(\mu^q)}$, valid for all $p,q\geq 1$, and all $f\in L^2_{\rm sym} (\mu^q)$ and $g\in L^2_{\rm sym} (\mu^p)$. 
	
	\smallskip
	
	We will often exploit the fact that, for all $f\in L^2(X,\F,\mu)$, one has that $I_q(f^{\otimes q})=H_q(I_1(f))$, where $H_q$ is the $q$-th (probabilistic) Hermite polynomial on the real-line; see e.g. \cite[Theorem 2.7.7]{Nourdin2012}. We recall that the collection $\{H_q : q=0,1,...\}$ of Hermite polynomials coincides with the coefficients of the exponential generating function
	\begin{equation*}
	e^{st-t^2/2}=\sum_{q=0}^{\infty} H_q(s) \frac{t^q}{q!},\quad t,s\in\R.\footnote{The first few polynomials are: $H_0(x) = 1$, $H_1(x) = x$, $H_2(x) = x^2-1$, $H_3(x) = x^3-3x$, $H_4(x) = x^4-6x^2+3$, and so on.}
	\end{equation*}
	It is well-known that $\set{H_q/\sqrt{q!}}_{q\geq 0}$ is an orthonormal basis of $L^2(\R,\phi(s)ds)$, where $\phi(s) =(2\pi)^{-1/2}  e^{-s^2/2} $ is the standard Gaussian density. 
	
	\smallskip
	
	According to the previous conventions and discussion, for all $x\in \H^n$, $u_\lambda (x) = I_1(K_{n,\lambda}(x,\cdot))$
	belongs to the first Wiener chaos $H^{:1:}$ and, consequently, the random variable
	\begin{equation*}
	H_q(u_\lambda(x))=I^q\pa{K_{n,\lambda}(x,\cdot)^{\otimes q}}
	\end{equation*}
	is an element of $H^{:q:}$, as defined in \eqref{e:wienerchaos}.
	Given $G\in L^2(\R)$ we deduce 
	--- e.g. by dominated convergence and Jensen's inequality ---  
	that the chaos expansion of the integral functional $\G_R(u_\lambda)$ defined in \eqref{e:gul} is given by
	\begin{gather}\label{eq:functionalG}
	\G_R(u_\lambda)=\sum_{q=0}^\infty h^{n,q}_{R,\lambda} \frac1{q!}\int_\R G(t)H_q(t)\phi(t)dt, \\
	\label{eq:polyspectraint}
	h^{n,q}_{R,\lambda}= I_q\pa{\int_{B_R} K_{n,\lambda}(x,\cdot)^{\otimes q}dm_n(x)},
	\end{gather}
	where the series \eqref{eq:functionalG} converges in $L^2(\PP)$ and its $q$th summand coincides with the projection of $G_R(u_\lambda)$ onto $H^{:q:}$; in particular, for $q\geq 1$, the \emph{polyspectrum} $h^{n,q}_{R,\lambda}$ is an element of $H^{:q:}$ and (by applying e.g. a stochastic Fubini argument) it is easily seen to coincide with \eqref{eq:polyspectra} above.
	The smallest $q\geq 0$ for which the coefficient $\int_\R G(t)H_q(t)\phi(t)dt$
	does not vanish is called the \emph{Hermite rank} of $G$ (and, by extension, of the functional $\G_R(u_\lambda)$). As recalled in the Introduction, the notion of Hermite rank plays a pivotal role in the asymptotic theory of Gaussian-subordinated random fields on Euclidean spaces, see e.g. \cite[Chapter 7]{Nourdin2012}.

	\subsubsection{The Wiener chaos approach to CLTs}\label{ss:wienerchaosstrategy} Mantaining the notation and assumptions of the previous Section, we will now state three results allowing one to prove CLTs by using Wiener chaos expansions: these statements are the core of the ``Wiener chaos approach'' advertised in the title of the paper.
	
	\smallskip
	
	Given $q\geq 1$ and $f,g\in L^2_{\rm sym}(\mu^{q})$, and $r=0,...,q$, the $r$-{\it contraction} of $f$ and $g$ is defined as $f\otimes_0 g = f\otimes g$ and, for $r=1,...,q$,
	\begin{multline*}
	(f\otimes_r g)(x_1,\dots x_{2q-2r})\\=\int_{X^r} f(x_1,\dots x_{q-r}, z_1,\dots,z_r)
	g(x_{q-r+1},\dots, x_{2q-2r}, z_1,\dots,z_r)d\mu^{r}(z_1,\dots,z_r),
	\end{multline*}
	in such a way that $f\otimes_r g \in L^2(\mu^{2q-2r})$, with the convention $L^2(\mu^0) := \R$.  
	
	\smallskip
	
	As made clear in the next statement, which is a direct consequence of \cite[Theorem 5.2.7 and Theorem 6.3.1]{Nourdin2012}, contractions can be used in order to quantitatively assess the distance to Gaussian within a fixed Wiener chaos.
	
	\begin{theorem}\label{t:wasschaos} Fix $q\geq 2$ and let $f\in L^2_{\rm sym}(\mu^{q})$ be such that $q!\| f\|^2_{L^2(\mu^q)} = \expt{I_q(f)^2} = \sigma^2>0$. Then, there exists a combinatorial constant $\Gamma(q) >0$, uniquely depending on $q$, such that
		\begin{equation}\label{e:wasschaos}
		{\bf W}_1(I_q(f), N(0,\sigma^2)) \leq \frac{\Gamma(q)}{\sigma}  \, \max_{r=1,...,q-1} \| f\otimes_r f \|_{L^2(\mu^{2(q-r)})}.
		\end{equation}
	\end{theorem}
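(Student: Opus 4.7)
The plan is to combine Stein's method for the $1$-Wasserstein distance with Malliavin integration by parts, exploiting the special structure of $q$th Wiener chaos elements. Setting $F:=I_q(f)$, Stein's characterization of the Gaussian law provides, for every $1$-Lipschitz $h:\R\to\R$, a smooth solution $\phi_h$ to the equation $\sigma^2 \phi_h'(x) - x\phi_h(x) = h(x)-\expt{h(N)}$ (with $N\sim N(0,\sigma^2)$) satisfying $\norm{\phi_h'}_\infty \lesssim 1/\sigma$. Taking expectations with respect to $F$ and supremizing over $h$ reduces the problem to bounding
\[
{\bf W}_1(F, N) \leq \sup_h \abs{ \expt{\sigma^2 \phi_h'(F) - F \phi_h(F)} }.
\]

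The next step is to rewrite $\expt{F\phi_h(F)}$ using Malliavin integration by parts: denoting by $D$ the Malliavin derivative and by $L$ the Ornstein--Uhlenbeck generator, which acts as $LF = -qF$ on $H^{:q:}$, one has
\[
\expt{F\phi_h(F)} = \expt{\phi_h'(F) \brak{DF, -DL^{-1}F}_{L^2(\mu)}} = q^{-1}\expt{\phi_h'(F)\norm{DF}_{L^2(\mu)}^2},
\]
where the second equality uses $-DL^{-1}F = q^{-1}DF$. Combining with the previous display and applying Cauchy--Schwarz together with the identity $\expt{q^{-1}\norm{DF}_{L^2(\mu)}^2} = \expt{F^2} = \sigma^2$, one obtains
\[
{\bf W}_1(F,N) \lesssim \frac{1}{\sigma}\sqrt{\var\pa{q^{-1}\norm{DF}_{L^2(\mu)}^2}}.
\]

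It remains to express the variance of $q^{-1}\norm{DF}_{L^2(\mu)}^2$ in terms of the contractions $f\otimes_r f$. Using $D_x F = q I_{q-1}(f(\cdot,x))$ and the product formula for multiple Wiener--It\^o integrals,
\[
q^{-1}\norm{DF}_{L^2(\mu)}^2 = q\int_X I_{q-1}(f(\cdot,x))^2 \, d\mu(x) = \sum_{r=0}^{q-1} c_{q,r}\, I_{2(q-1-r)}\bigl(f\tilde\otimes_{r+1} f\bigr),
\]
for explicit combinatorial coefficients $c_{q,r}$. The term $r=q-1$ contributes the deterministic constant $\sigma^2$, while the remaining summands are pairwise orthogonal in $L^2(\PP)$. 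The isometry of multiple integrals, together with the elementary bound $\norm{f\tilde\otimes_s f}\leq \norm{f\otimes_s f}$ coming from the fact that symmetrization is a contraction in $L^2$, produces
\[
\var\pa{q^{-1}\norm{DF}_{L^2(\mu)}^2} \leq \Gamma(q)^2 \max_{r=1,\dots,q-1}\norm{f\otimes_r f}^2_{L^2(\mu^{2(q-r)})}
\]
for a suitable $q$-dependent $\Gamma(q)$, which combined with the previous display yields the advertised bound.

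The main obstacle will be the combinatorial bookkeeping in the product formula and the tracking of the index shift from $r$ to $r+1$ in the contractions, which must be carried out carefully to identify the correct constants $c_{q,r}$ and to verify that only contractions of strict order $r\in\{1,\dots,q-1\}$ survive on the right-hand side after subtracting the mean. The passage from symmetrized to unsymmetrized contractions, although standard, is also a key simplifying step.
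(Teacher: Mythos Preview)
Your proposal is correct and is precisely the Malliavin--Stein argument underlying \cite[Theorem 5.2.7 and Theorem 6.3.1]{Nourdin2012}, which the paper invokes without reproducing a proof. In other words, you have written out the argument that the paper simply cites, so the approaches coincide.
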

	
	Combining \eqref{e:wassscale} with \cite[Proposition 3.7]{NPTRF}, one can use contractions to derive effective bounds on the normal approximation of random variables living in a finite sum of Wiener chaoses. 
	\begin{proposition}\label{p:sumclt} Let $Q\geq 1$ and let 
		$$
		F = \sum_{q=1}^Q I_q(g_q), \quad g_q\in L^2_{\rm sym}(\mu^{q}),
		$$
		be such that $\expt{F^2} = \sum_{q=1}^Q q! \|g_q\|^2_{L^2(\mu^q)} = \sigma^2>0$. Then, there exists a combinatorial constant $\Gamma_1(Q)$, uniquely depending on $Q$, such that
		\begin{equation}
		{\bf W}_1(F, N(0,\sigma^2))\leq \frac{\Gamma_1(Q)}{\sigma} \cdot \max_{q,r} \|g_q \otimes_r g_q\|_{L^{2(q-r)}}.
		\end{equation} 
		where the maximum runs over all $q=1,...,Q$ and all $r=1,...,q-1$.
	\end{proposition}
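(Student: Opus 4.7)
The plan is to combine the scaling identity \eqref{e:wassscale} with \cite[Proposition 3.7]{NPTRF}, exactly as suggested in the paragraph preceding the statement. First I would normalize by setting $\tilde F := F/\sigma$, which admits the chaotic decomposition $\tilde F = \sum_{q=1}^Q I_q(\tilde g_q)$ with $\tilde g_q := g_q/\sigma$ and has unit variance. By \eqref{e:wassscale},
$$
{\bf W}_1(F, N(0,\sigma^2)) = \sigma \cdot {\bf W}_1(\tilde F, N(0,1)),
$$
so it suffices to control the right-hand side.

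Next I would invoke \cite[Proposition 3.7]{NPTRF}, a Stein--Malliavin estimate that, for any centered, unit-variance random variable living in the first $Q$ Wiener chaoses, bounds the $1$-Wasserstein distance to $N(0,1)$ by a finite linear combination of contraction norms $\|\tilde g_q \otimes_r \tilde g_q\|_{L^2(\mu^{2(q-r)})}$ with $q = 1,\dots,Q$ and $r=1,\dots,q-1$, the coefficients depending only on $Q$. The cross-chaos terms are absent by orthogonality between distinct chaoses, so only these intra-chaos contractions survive. Bounding each of the finitely many summands by their maximum and absorbing the total number of summands into a new constant $c(Q)$ yields
$$
{\bf W}_1(\tilde F, N(0,1)) \leq c(Q) \cdot \max_{q,r} \|\tilde g_q \otimes_r \tilde g_q\|_{L^2(\mu^{2(q-r)})}.
$$

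To conclude, I would exploit the quadratic homogeneity of the contraction: since $\tilde g_q = g_q/\sigma$, one has $\|\tilde g_q \otimes_r \tilde g_q\|_{L^2(\mu^{2(q-r)})} = \sigma^{-2}\|g_q \otimes_r g_q\|_{L^2(\mu^{2(q-r)})}$. Multiplying the previous display by $\sigma$ then produces the stated inequality with $\Gamma_1(Q) := c(Q)$. The whole argument is essentially bookkeeping once the NPTRF estimate is in hand; the only point requiring care is to verify that the sum-type bound of \cite[Proposition 3.7]{NPTRF} can be recast in the ``max over $(q,r)$'' form written here (a trivial step, since we are absorbing a $Q$-dependent number of terms). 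There is no genuine analytic obstacle, as the heavy lifting -- Stein's method combined with the Malliavin calculus representation of the Stein discrepancy via $\langle DF, -DL^{-1}F\rangle$ -- is already carried out in the reference.
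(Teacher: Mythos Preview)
Your proposal is correct and matches the paper's approach exactly: the paper does not give a detailed proof but simply attributes the statement to combining the scaling identity \eqref{e:wassscale} with \cite[Proposition 3.7]{NPTRF}, which is precisely what you do. One minor caveat: the reason only self-contractions appear in the final bound is not ``orthogonality between distinct chaoses'' (cross-terms $g_p\otimes_r g_q$ with $p\neq q$ do arise when one expands $\langle DF,-DL^{-1}F\rangle$ via the product formula) but rather that such cross-contractions are dominated by the self-contractions through Cauchy--Schwarz, a reduction already built into \cite[Proposition 3.7]{NPTRF}; since you defer to that reference anyway, this does not affect the validity of your argument.
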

	
	\smallskip
	
	Finally, in order to derive CLTs in the context of generic nonlinear functionals of hyperbolic waves, we will need the following general result.
	
	\begin{theorem}\label{thm:fourthmoment}
		Under the above assumptions and notation, consider a sequence 
		of square-integrable random variables with the form
		\begin{equation*}
		F_j=\sum_{q=1}^\infty I_q(f_{j,q}), \quad f_{j,q}\in L^2_{\rm sym}(\mu^{q}),\quad j\geq 1,
		\end{equation*}
		and write $c^2_{j,q} := q! \| f_{j,q} \|^2_{L^2(\mu^q)}$. Consider a sequence $c^2(j) \to \infty$ with the following property: there exist $T,T_0\subset \mathbb{N}$ such that $T\cap T_0 =\emptyset$, $T\neq \emptyset$ and $T\cup T_0 = \mathbb{N}$, as well as finite constants $0<b_1<b_2$ and $\sigma^2_q\geq 0$, verifying
		\begin{enumerate}
			\item $\sum_{q=1}^\infty \sigma_q^2:= \Sigma_0<\infty$;
			\item for all $q\in T$, $\sigma^2_q>0$ and
			$$
			b_1\, \sigma^2_q \,  c^2(j) \leq c^2_{j,q} \leq b_2 \, \sigma^2_q \,  c^2(j), \quad j\geq 1; 
			$$
			\item for all $q\in T_0$, $c^2_{q,j}\leq b_2 \, c^2(j)\,  \sigma^2_q$;
			\item for fixed $q\geq 1$ and all $r=1,\dots,q-1$,	
			\begin{equation*}
			\lim_{j\to\infty}c^{-2}(j) \norm{f_{j,q}\otimes_r f_{j,q}}_{L^2(\mu^{2 (q-r) })}=0.
			\end{equation*}
		\end{enumerate}
		Then, as $j\to \infty$, 
		\begin{equation}
		\label{e:conwass3}
		{\rm Var}(F_j) \simeq c^2(j)\mbox{\quad and \quad} {\bf W}_1\left(\frac{F_j}{ c(j)} , N(0, \gamma^2(j)) \right) \longrightarrow 0,
		\end{equation}
		where $\gamma^2(j) :=  {\rm Var}(F_j) /c^2(j)$. 	
		
		\begin{remark} The discussion around formulae \eqref{e:conwass1}--\eqref{e:conwass2} above illustrates the significance of the second asymptotic relation in \eqref{e:conwass3}.
		\end{remark}

		\begin{proof}[Proof of Theorem \ref{thm:fourthmoment}] One has that
			$$
			b_1\, \sum_{q\in T} \sigma^2_q \, \, c^2(j) \leq  {\rm Var}(F_j)\leq b_2\, \Sigma_0 \, c^2(j),
			$$
			from which one deduces that ${\rm Var}(F_j) \simeq c^2(j)$. Now write $N_j := N(0, \gamma^2(j))$, fix $Q$ such that $T\cap \{1,...,Q\} \neq \emptyset$ (such a $Q$ exists because $T$ is non empty), and observe that -- by applying twice the triangle inequality -- 
			$$
			{\bf W}_1(F_j / c(j) , N_j)\leq 2 \sqrt{b_2\!\!\sum_{q\geq Q+1} \sigma^2_q}  + {\bf W}_1\left(\frac{1}{c(j)} \sum_{q=1}^Q I_q(f_{j,q}), N(0, \sigma^2(j,Q) ) \right)
			$$
			where $\sigma^2(j,Q) := c(j)^{-2} \sum_{q=1}^Q c^2_{j,q}\geq b_1 \sigma^2_{q_0}>0$, and $q_0$ is any element of $T\cap \{1,...,Q\}$. It follows from Proposition \ref{p:sumclt} that
			\begin{eqnarray*}
				{\bf W}_1(F_j / c(j) , N_j)\!\!\!\!&\leq&\!\!\!\! 2\sqrt{b_2\!\!\sum_{q\geq Q+1} \sigma^2_q} +\frac{\Gamma_1(Q)}{{b^{1/2}_1} \sigma_{q_0}} \max_{q,r} \frac{\norm{f_{j,q}\otimes_r f_{j,q}}_{L^2(\mu^{2 (q-r) })}}{c^2(j)} ,\\
				&:=& A(Q) + B(Q,j).
			\end{eqnarray*}
			where the maximum runs over all $q\leq Q$ and all $r=1,...,q-1$. Since $A(Q)\to 0$ (as $Q\to \infty$) and $B(Q,j)\to 0$ (as $j\to \infty$ for fixed $Q$), the conclusion follows.
		\end{proof}	
	\end{theorem}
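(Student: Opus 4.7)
The plan is to reduce the infinite-chaos statement to the finite-chaos case via truncation, then apply Proposition~\ref{p:sumclt} to the truncated sum, controlling the remainder by its variance.

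\emph{Step 1 (variance asymptotics).} By orthogonality of the Wiener chaoses, $\mathrm{Var}(F_j) = \sum_{q\geq 1} c^2_{j,q}$. Picking any $q_0\in T$ (which exists since $T\neq\emptyset$), hypothesis~(2) gives the lower bound $\mathrm{Var}(F_j)\geq b_1\sigma^2_{q_0}c^2(j)$, while combining (2) and (3) with (1) yields $\mathrm{Var}(F_j)\leq b_2 \Sigma_0\, c^2(j)$. Hence $\mathrm{Var}(F_j)\simeq c^2(j)$, so that $\gamma^2(j)$ lies in a compact subinterval of $(0,\infty)$ and every subsequence of $\gamma^2(j)$ admits a convergent sub-subsequence.

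\emph{Step 2 (truncation).} Choose $Q\geq 1$ with $T\cap\{1,\dots,Q\}\neq\emptyset$, and split
\[
F_j = F_j^{(Q)} + R_j^{(Q)}, \qquad F_j^{(Q)} := \sum_{q=1}^{Q} I_q(f_{j,q}),\quad R_j^{(Q)} := \sum_{q>Q} I_q(f_{j,q}).
\]
The tail variance satisfies $\mathrm{Var}(R_j^{(Q)})\leq b_2 c^2(j)\sum_{q>Q}\sigma^2_q$. Writing $\sigma^2(j,Q):=\mathrm{Var}(F_j^{(Q)})/c^2(j)$, two applications of the triangle inequality for $\mathbf{W}_1$ yield
\[
\mathbf{W}_1\!\left(\tfrac{F_j}{c(j)},N(0,\gamma^2(j))\right)
\leq \mathbf{W}_1\!\left(\tfrac{F_j^{(Q)}}{c(j)},N(0,\sigma^2(j,Q))\right) + \mathbf{W}_1\!\left(\tfrac{R_j^{(Q)}}{c(j)},0\right) + \mathbf{W}_1\!\left(N(0,\sigma^2(j,Q)),N(0,\gamma^2(j))\right).
\]
The last two terms are each bounded by a constant multiple of $\sqrt{b_2\sum_{q>Q}\sigma^2_q}$, using $\mathbf{W}_1(X,0)\leq (\mathbb{E} X^2)^{1/2}$ for the remainder and the standard comparison $\mathbf{W}_1(N(0,a^2),N(0,b^2))\leq |a-b|$ for the Gaussian mismatch.

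\emph{Step 3 (finite chaos sum and order of limits).} For the first term, Proposition~\ref{p:sumclt} applied to $F_j^{(Q)}$, whose variance is bounded below by $b_1 \sigma^2_{q_0}c^2(j)>0$, gives
\[
\mathbf{W}_1\!\left(\tfrac{F_j^{(Q)}}{c(j)},N(0,\sigma^2(j,Q))\right)
\leq \frac{\Gamma_1(Q)}{b_1^{1/2}\sigma_{q_0}}\cdot \max_{1\leq q\leq Q,\,1\leq r\leq q-1}\frac{\lVert f_{j,q}\otimes_r f_{j,q}\rVert_{L^2(\mu^{2(q-r)})}}{c^2(j)},
\]
(after rescaling by $c(j)$ via \eqref{e:wassscale}). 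By hypothesis~(4), for each fixed $Q$ the right-hand side vanishes as $j\to\infty$. We conclude by a standard two-parameter argument: given $\varepsilon>0$, pick $Q$ large so that the tail contribution $\lesssim\sqrt{\sum_{q>Q}\sigma^2_q}<\varepsilon$, then let $j\to\infty$ to make the finite-sum contraction term $<\varepsilon$.

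\emph{Main obstacle.} The only delicate point is the interchange between the truncation level $Q$ and the index $j$: the chaos-by-chaos contraction hypothesis does not provide uniform control in $q$, so we cannot truncate after sending $j\to\infty$. The triangle inequality makes this rigorous, provided we uniformly bound the tail variance via (1)--(3) --- which is what hypothesis~(1) is tailored for.
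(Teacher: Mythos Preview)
Your proof is correct and follows essentially the same approach as the paper's: truncate at level $Q$, control the tail by $\sqrt{b_2\sum_{q>Q}\sigma_q^2}$ via the triangle inequality, apply Proposition~\ref{p:sumclt} to the finite sum, and conclude by choosing $Q$ then sending $j\to\infty$. The only cosmetic difference is that you spell out the triangle inequality as three explicit terms (remainder, Gaussian mismatch, finite sum), whereas the paper compresses the first two into a single term with a factor~$2$.
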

	\noindent

	\subsection{CLTs for integral functionals}\label{ssec:clts}
	Our aim is now to apply \autoref{thm:fourthmoment} to the integral functionals $\G_R(u_\lambda)$ (as defined in \eqref{e:gul}), both as $\lambda\to \infty$ and as $R\to \infty$. In view of \eqref{eq:functionalG}--\eqref{eq:polyspectraint}, this task requires one to assess both the variances and the contraction norms associated with the polyspectra $h^{n,q}_{R,\lambda}$, once these random elements are represented as multiple stochastic integrals --- with respect to the white noise $W$ on $(X, \mu)$ featured in \eqref{e:wnrep} --- of kernels of the type $$\int_{B_R} K_{n,\lambda}(x,\cdot )^{\otimes q}dm_n(x) \in L^2_{\rm sym} (\mu^q).$$ 
	
	One fundamental fact (explaining why, for our purposes, the precise choice of the white noise $W$ and measure space $(X, \mathcal{F}, \mu)$ is immaterial) is that both the variances and the contraction norms associated with the polyspectra  $h^{n,q}_{R,\lambda}$ uniquely depend on the covariance function $\cov(u_\lambda(x),u_\lambda(y))=F_{n,\lambda}(d(x,y))$. To see this, we observe that, by a standard argument based e.g. on \cite[Proposition 2.2.1]{Nourdin2012}, for all $q\geq 1$ one has that
	\begin{eqnarray}\label{e:varh}
	\var(h^{n,q}_{R,\lambda})
	&=&\expt{\int_{B_R}\int_{B_R} H_q(u_\lambda(x))H_q(u_\lambda(y)) dm_n(x) dm_n(y)}\\ \notag
	&=& q! \int_{B_R}\int_{B_R} F_{n,\lambda}(d(x,y))^q dm_n(x) dm_n(y).
	\end{eqnarray}
	Analogously, a direct computation (based on the use of Fubini theorem as well as on the representation \eqref{e:wnrep} and the isometric properties of real white noises stated in Remark \ref{r:whitenoise}) yields the equation
	\begin{multline}\label{eq:contractionsK}
	\norm{\pa{\int_{B_R} K_{n,\lambda}(x,\cdot)^{\otimes q}dm_n(x)}
		\otimes_r \pa{\int_{B_R} K_{n,\lambda}(y,\cdot)^{\otimes q}dm_n(y)}}_{L^2(\mu^{\otimes 2 (q-r) })}^2\\
	=\int_{B_R^4}
	F_{n,\lambda}(d(x,y))^r F_{n,\lambda}(d(y,z))^{q-r}\\ \cdot F_{n,\lambda}(d(z,w))^r F_{n,\lambda}(d(w,x))^{q-r}
	dm_n^{\otimes 4}(x,y,z,w).
	\end{multline}
	
	\medskip

	Appropriate tools for estimating expressions such as \eqref{e:varh}--\eqref{eq:contractionsK} are developed in Section \ref{sec:moments}. As an application of these techniques, we will prove the following statement, which is one of the main achievements of the paper.
	\begin{theorem}\label{thm:cltHermite}
		Let $n\geq 2$ and $q\geq 1$. We have the following asymptotic relations for ${q!}^{-1} \, \var(h^{n,q}_{R,\lambda})$:
		\begin{center}
			\begin{tabular}{l|l|l}
				${q!}^{-1}\var(h^{n,q}_{R,\lambda})$ & $\lambda\to\infty$, $R$ fixed & $R\to\infty$, $\lambda$ fixed \\
				\hline
				$q=1$& $\lesssim_{n,R} \lambda^{-\sigma-1}$& $\lesssim_{n,\lambda} m_n(B_R)$\\
				$q=2$& $\simeq_{n,R} \lambda^{-\sigma}$& $\simeq_{n,\lambda} R\cdot m_n(B_R)$\\
				even $q\geq 4$, except $n=2,q=4$& $\simeq_{n,R} \lambda^{-\sigma-1/2}$ & $\simeq_{n,\lambda} m_n(B_R)$\\
				$n=2,q=4$& $\simeq_{n,R} \log(\lambda)/\lambda$ & $\simeq_{n,\lambda} m_n(B_R)$ \\
				odd $q\geq 3$, except $n=q=3$& $\lesssim_{n,R} \lambda^{-\sigma-1/2}$ & $\lesssim_{n,\lambda} m_n(B_R)$\\
				$n=q=3$& $\lesssim_{n,R} \lambda^{-3/2}\log(\lambda)$ & $\lesssim_{n,\lambda} m_n(B_R)$\\
			\end{tabular}
		\end{center}
		\noindent
		In both limiting regimes considered above, for all $1\leq r\leq q-1$, the squared contraction norms \eqref{eq:contractionsK} 
		are of order $o\left(( q')!^{-2} \var(h^{n,q'}_{R,\lambda})^2\right)$ for all even $q'\leq q$, where the implicit constants depend on $n$ and $R$ (as $\lambda\to \infty$) and on $n$ and $\lambda$ (as $R\to \infty$). As a consequence, one has that, for all $n\geq1$ and all $q$ even,
		\begin{equation}\label{e:cltpoly}
		\frac{h^{n,q}_{R,\lambda}}{\var(h^{n,q}_{R,\lambda})^{1/2}}\stackrel{{\rm Law}}{\Longrightarrow} N(0, 1),
		\end{equation}
		both as $\lambda \to\infty$, for $R$ fixed, and as $R\to \infty$, for $\lambda$ fixed.
	\end{theorem}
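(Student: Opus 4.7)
The plan is to derive all conclusions from analytic estimates on the spherical function $F_{n,\lambda}$, combined with the variance identity \eqref{e:varh} and the quadruple-integral representation \eqref{eq:contractionsK}. The two analytic inputs required are: (i) an asymptotic expansion of $F_{n,\lambda}(r)$ for $r$ bounded away from zero as $\lambda \to \infty$, obtained by stationary-phase analysis of the integral representation \eqref{eq:Fdef2} and producing an oscillatory main term of shape $c_n \, \lambda^{-\sigma/2}\sinh(r)^{-\sigma}\cos(\alpha r + \phi_n)$ plus lower-order corrections; and (ii) an expansion for $r\to\infty$ at fixed $\lambda$, obtained from the hypergeometric representation of solutions to the ODE \eqref{eq:Fode}, yielding a decay of the shape $c(\lambda)\, e^{-\sigma r}\cos(\alpha r + \phi)$. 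These are precisely the technical estimates announced for Section \ref{sec:moments}.

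Given these, I would compute the variance by using isometry invariance to reduce \eqref{e:varh} to a one-dimensional integral of $F_{n,\lambda}(r)^q$ against the volume-of-intersection kernel of two balls, which behaves like $\sinh(r)^{n-1}$ times a slowly varying factor. In the large-domain regime, the integrand $F_{n,\lambda}^q\sinh^{n-1}$ decays like $\sinh(r)^{(1-q/2)(n-1)}$ times a bounded oscillation: it is integrable on $\H^n$ for $q\geq 3$ (yielding order $m_n(B_R)$), marginally non-integrable for $q=2$ with logarithmic divergence in $R$ (yielding $R\cdot m_n(B_R)$), and handled by oscillatory cancellation for $q=1$. In the high-frequency regime, substituting $r=s/\sqrt\lambda$ localizes the integral near the origin, and invoking the local scaling limit of Proposition \ref{prop:localbehavior} reduces the main contribution to $\lambda^{-(n+q\sigma)/2}$ times an integral of $C_{n,1}^q$ against Lebesgue measure; this is a classical Bessel-type integral, finite except precisely in the resonant case $q=4$, $n=2$, where it diverges logarithmically and produces the anomalous rate $\log(\lambda)/\lambda$. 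For even $q$, the oscillatory averages have strictly positive leading coefficient (time-average of $\cos^q$), yielding matching lower bounds.

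To handle the contraction norm \eqref{eq:contractionsK}, I would bound the fourfold integral of $F^r(x,y)F^{q-r}(y,z)F^r(z,w)F^{q-r}(w,x)$ by Cauchy--Schwarz in two different ways: pairing the $F^r$ factors together produces $\int_{B_R^2} F^{2r}\cdot\int_{B_R^2} F^{2(q-r)}$, while an alternative rearrangement gives terms involving $\int F^{q}\cdot \int F^{|2r-q|}$ controlled via the variance-type integrals already estimated. In both regimes, the point is that for $1\leq r\leq q-1$ at least one of the resulting factors enjoys a strictly faster decay (in $\lambda^{-1/2}$, respectively in $m_n(B_R)^{-1}$) than the square of the variance, so $\|f\otimes_r f\|^2 = o(\var^2)$. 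Applying the same bound against the variance of any even $q'\leq q$ gives the full $o((q')!^{-2}\var(h^{n,q'}_{R,\lambda})^2)$ claim needed downstream for Theorem \ref{thm:fourthmoment}.

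Once the variance lower bounds and the contraction upper bounds are in place, the CLT \eqref{e:cltpoly} for even $q$ follows immediately from Theorem \ref{t:wasschaos} applied to the single Wiener chaos $H^{:q:}$ containing $h^{n,q}_{R,\lambda}$. The main obstacle will be pinning down the resonant case $q=4$, $n=2$ in the high-frequency limit: the logarithmic correction forces one to track the phase $\alpha r + \phi$ accurately through the integration against $\sinh(r)^{n-1}$ in the intermediate zone, and one must simultaneously verify that every contraction norm enjoys a strictly stronger-than-logarithmic decay, so that the normal approximation survives the anomalous variance rate. Secondary difficulties arise in the other exceptional entries of the table (odd $q$ and the $n=q=3$ case), where the oscillatory cancellation must be exploited more carefully to obtain the claimed upper bounds rather than relying on absolute-value estimates.
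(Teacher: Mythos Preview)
Your plan for the variance asymptotics is sound and close in spirit to the paper's, though the paper organizes the high-frequency computation differently: rather than substituting $r=s/\sqrt\lambda$ and invoking the local scaling limit of Proposition~\ref{prop:localbehavior}, it splits the radial integral at $r=1/\alpha$, uses $|F_{n,\lambda}|\leq 1$ on $[0,1/\alpha]$, and applies the expansion $F_{n,\lambda}(r)\sim c_n\alpha^{-\sigma}\sinh(r)^{-\sigma}\cos(\alpha r+\phi)$ on $[1/\alpha,2R]$. Both routes recover the table. (A minor slip: for $q=2$ in the large-domain regime the radial integrand $F^2\sinh^{n-1}$ is bounded and oscillatory, so the inner integral diverges \emph{linearly} in $R$, not logarithmically; your final answer $R\cdot m_n(B_R)$ is nonetheless correct.)

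The genuine gap is in the contraction step. Your Cauchy--Schwarz pairing produces at best a product of \emph{two} moment integrals, $C^{n,2r}_{R,\lambda}\cdot C^{n,2(q-r)}_{R,\lambda}$, and your assertion that ``at least one of the resulting factors enjoys a strictly faster decay'' is exactly what fails: for $r=q/2$ (any even $q$) this bound equals $(C^{n,q}_{R,\lambda})^2$ on the nose, and for $q=2$, $r=1$ there is no other choice of $r$ to try. Cauchy--Schwarz is saturated on the diagonal, so no rearranged pairing can do better than $\var^2$, and the ``alternative rearrangement'' you sketch yields only larger terms. The paper's mechanism is different: it applies the AM--GM inequality
\[
|F(z,w)|^{r}\,|F(w,x)|^{q-r}\leq \tfrac12\bigl(|F(z,w)|^{2r}+|F(w,x)|^{2(q-r)}\bigr)
\]
to break the \emph{cyclic} dependence in the quadruple integral, so that one variable ($w$) now appears in a single factor. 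Integrating $w$ out and iterating decouples the integral into $m_n(B_R)\cdot J_p\,J_{p'}\,J_{2p}$, where $J_p=\int_0^{2R}|F_{n,\lambda}(r)|^p\sinh(r)^{2\sigma}dr$. Obtaining \emph{three} one-dimensional moment factors rather than two is precisely what supplies the extra power of $\lambda^{-1/2}$ (resp.\ of $m_n(B_R)^{-1}$) needed for $o(\var^2)$; the argument then closes with a case-by-case comparison of $J_pJ_{p'}J_{2p}$ against the variance table.

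One small point: for $n=q=3$ you anticipate needing oscillatory cancellation, but the table only claims $\lesssim\lambda^{-3/2}\log\lambda$, and the paper obtains this from absolute-value estimates alone (the logarithm is an artifact of the bound, not the true order; see Remark~\ref{rmk:n=q=3}).
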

	
	The first part of the statement (concerning variances) is proved in \autoref{lem:momentlambda} and \autoref{lem:momentr};
	the second part (on contraction norms) is proved in \autoref{lem:contractionslambda} and \autoref{lem:contractionsr}. The CLT \eqref{e:cltpoly} is a direct consequence of \eqref{e:wasschaos}.
	
	\begin{remark}\label{r:hr2} By inspection of the proofs of the four Lemmas \ref{lem:momentlambda}, \ref{lem:momentr}, \ref{lem:contractionslambda} and \ref{lem:contractionsr}, one can extrapolate some more precise information about the rate of convergence of squared contraction norms. For instance, in the case $q=2$, one has that, writing $X(R,\lambda, n)$ for the ratio
		$$
		\frac{\int_{B_R^4}
			F_{n,\lambda}(d(x,y)) F_{n,\lambda}(d(y,z))\\ \cdot F_{n,\lambda}(d(z,w)) F_{n,\lambda}(d(w,x))
			dm_n^{ 4}(x,y,z,w)}{\var(h^{n,2}_{R,\lambda})^2}, $$
		for all $n\geq 1$,
		\begin{equation}\label{e:q2} 
		X(R,\lambda, n) \,\, \begin{cases}
		\lesssim_{n,R}\lambda^{-\sigma-2}, \quad \lambda\to\infty,\\
		\lesssim_{n,\lambda} \frac{1}{R}, \, \quad R\to\infty;\\
		\end{cases}
		\end{equation}
		Finally, applying Theorem \ref{t:wasschaos} yields the following estimate on the speed of convergence in \eqref{e:cltpoly}:
		\begin{equation}\label{e:x}
		{\bf W}_1\left( \frac{h^{n,2}_{R,\lambda}}{\var(h^{n,2}_{R,\lambda})^{1/2}}, N(0,1)\right) \leq c\cdot  X(R,\lambda, n)^{1/2},
		\end{equation}
		where $c>0$ is some absolute constant. For general $q>2$ even, similar estimates can be deduced from the proof of Lemma \ref{lem:contractionslambda} and the statement of Lemma \ref{lem:contractionsr}. Again by virtue of Theorem \ref{t:wasschaos}, such estimates yield bounds on the speed of convergence (in the 1-Wasserstein distance) in the CLT \eqref{e:cltpoly}.
	\end{remark}

	\begin{remark}
		We recall that, as $R\to \infty$, one has that $m_n(B_R)\simeq_n e^{2\sigma R}$. 
		In the previous statement, we preferred to use expressions involving $m_n(B_R)$ in order to better streamline the comparison with the Euclidean case, as done in the next remark.
	\end{remark}
	
	\begin{remark}[Comparison with the Euclidean case] For every $n\geq 2$, consider the Euclidean random wave with energy $\lambda>0$ featured in \eqref{eq:berrycov}, and define the Euclidean polyspectrum $h_{e;R,\lambda}^{n,q}$ according to \eqref{eq:polyspectra}, by replacing $u_\lambda$ with $v_\lambda$, and by considering that $B_R$ is the Euclidean ball of radius $R$ centered at the origin, and $m_n$ is the Lebesgue measure on $\R^n$. Then, the following table of asymptotic relations (that we extrapolated from \cite[Theorem V.1.1]{MNPHD} -- see also \cite{Nourdin2019, Peccati2020, Notarnicola2022} and \cite{Dalmao2022}, respectively, for the cases $n=2$ and $n=3$) is valid, with $\sigma$ defined as in \eqref{e:parispectrum}:
		
		\smallskip
		
		\begin{center}
			\begin{tabular}{l|l|l}
				$q!^{-1} \var(h^{n,q}_{e;R,\lambda})$ & $\lambda\to\infty$, $R$ fixed & $R\to\infty$, $\lambda$ fixed \\
				\hline
				$q=1$& $\lesssim_{n,R} \lambda^{-\sigma-1}$& $\lesssim_{n,\lambda} m_n(B_R)/R$\\
				$q=2$& $\simeq_{n,R} \lambda^{-\sigma}$& $\simeq_{n,\lambda} R\cdot m_n(B_R)$\\
				even $q\geq 2$, except $n=2,q=4$& $\simeq_{n,R} \lambda^{-\sigma-1/2}$ & $\simeq_{n,\lambda} m_n(B_R)$\\
				$n=2,q=4$& $\simeq_{R} \log(\lambda)/\lambda$ & $\simeq_{\lambda}\log(R)\cdot m_n(B_R)$ \\
				odd $q\geq 3$, except $n=q=3$& $\lesssim_{n,R} \lambda^{-\sigma-1/2}$ & $\lesssim_{n,\lambda} m_n(B_R)$\\
				$n=q=3$& $\lesssim_{R} \lambda^{-3/2}\log(\lambda)$ & $\lesssim_{\lambda} \log(R)\cdot m_n(B_R)$\\
			\end{tabular}
		\end{center}
		
		\smallskip
		
		\noindent Moreover, estimates on the contraction norms analogous to the ones in the statement of Theorem \ref{thm:cltHermite} hold. As discussed in the Introduction, the most remarkable difference between this table and the one in Theorem \ref{thm:cltHermite} is that, in the case $n=2, \, q=4$ and for $R\to\infty$, the variance of $h^{n,q}_{e;R,\lambda}$ does not display any logarithmic correction.
	\end{remark}
	
	\begin{remark}[Comparison with random spherical harmonics]
		Laplace-Beltrami operator $\Delta_{S^n}$ on the sphere $S^n$ has a discrete spectrum,
		and an orthonormal basis of $L^2(S^n)$ that diagonalizes $\Delta_{S^n}$ is provided by spherical harmonics
		\begin{gather*}
		\Delta_{S^n} Y_{\ell,m;n}=\ell(\ell+n-1)Y_{\ell,m;n}, \quad\ell\in\N,\,m=1,2\dots,d_{\ell;n},\\
		d_{\ell;n}=\frac{2\ell+n-1}{\ell}\binom{\ell+n-2}{\ell-1},
		\end{gather*}
		$\ell$ playing a role similar to the one of $\alpha$ in hyperbolic waves, that is $\ell$ is proportional to the
		square root of the eigenvalue $\lambda=\ell(\ell+n-1)$ and $\ell\in\N$ parametrizes the spectrum.
		The index $m$ on the other hand parametrizes a single eigenspace, $d_{\ell;n}$ denoting its dimension.
		Random spherical harmonics are defined by
		\begin{equation*}
		T_\lambda(x)=\sum_{m=1}^{d_{\ell;n}} a_{\ell,m} Y_{\ell,m;n},\quad x\in S^n,\,\lambda=\ell(\ell+n-1),
		\end{equation*}
		with $a_{\ell,m}$ being i.i.d. Gaussian variables with
		$\expt{a_{\ell,m}a_{\ell,m'}}=\delta_{m=m'}\omega_{n}/d_{\ell;n}$,
		and we can consider polyspectra $h_{\operatorname{sph},\lambda}^{n,q}=\int_{S^n} H_q(T_\lambda(x))d\varsigma_n(x)$. We cannot consider a large-domain limiting regime on $S^n$: in this case we report variance asymptotics only
		in the high-frequency regime, matching the ones of Euclidean and hyperbolic cases.
		\begin{center}
			\begin{tabular}{l|l}
				$q!^{-1} \var(h_{\operatorname{sph},\lambda}^{n,q})$ & $\lambda\to\infty$\\
				\hline
				$q=1$& $=0$\\
				$q=2$& $\simeq_{n} \lambda^{-\sigma}$\\
				even $q\geq 2$, except $n=2,q=4$& $\simeq_{n} \lambda^{-\sigma-1/2}$ \\
				$n=2,q=4$& $\simeq \log \lambda/\lambda$ \\
				odd $q\geq 3$ & $\lesssim_{n} \lambda^{-\sigma-1/2}$
			\end{tabular}
		\end{center}
		We refer to \cite{Marinucci2015} for the latter results.
		Notice that the case $n=q=3$ is not included as an exception, because in fact
		$h_{\operatorname{sph},\lambda}^{n,q}$ identically vanish if
		$n,q$ are both odd, just as in the case $q=1$ (any $n$) in the table.
		This is an artifact of having chosen the whole $S^n$ as integration domain:
		symmetries of spherical harmonics come into play producing cancellations.
		In dimension $n=2$ the study was extended to polyspectra over spherical caps
		in \cite{Todino2019}, obtaining the following: for $\Omega\subset S^2$ 
		a spherical cap subtended by a solid angle,
		\begin{center}
			\begin{tabular}{l|l}
				$q!^{-1} \var(\int_\Omega H_q(T_\lambda(x))d\varsigma_2(x))$ & $\lambda\to\infty$\\
				\hline
				$q=1$& $\lesssim \lambda^{-3/2}$\\
				$q=2$& $\simeq_{n} \lambda^{-1/2}$\\
				$q=4$& $\simeq \log \lambda/\lambda$ \\
				even $q\geq 6$& $\simeq_{n} \lambda^{-1}$ \\
				odd $q\geq 3$ & $\lesssim_{n} \lambda^{-1}$
			\end{tabular}
		\end{center}
		(constants in the estimates are independent of $\Omega$).
		We refer to the series of works \cite{Marinucci2011,Marinucci2011a,Marinucci2011book,Marinucci2014,Marinucci2015,Marinucci2021,Rossi2019}
		for a complete overview of the Wiener chaos approach to
		integral functionals of random spherical harmonics.
	\end{remark}

	\begin{remark}\label{r:odd}
		When $q$ is odd, Theorem \ref{thm:cltHermite} only yields upper bounds for $\var(h^{n,q}_{R,\lambda})$ (in both regimes):
		obtaining a lower bound in this case is indeed complicated by the fact that such a variance displays oscillations that are arbitrarily close to zero. For this reason, in the forthcoming Proposition \ref{prop:cltgeneral} we are not able establish CLTs for functionals of odd Hermite rank. This point, together with an explanation of the fact that the cases $n=2,q=4$ and $n=q=3$ of the table feature a logarithmic correction in the high-frequency regime, will be fully discussed in Section \ref{sec:moments} below.
	\end{remark}
	
	\begin{remark}\label{rmk:unifinq}
		In the table appearing in Theorem \ref{thm:cltHermite}, the asymptotic relations for $q\geq 5$ have no explicit dependence on $q$ (the latter is indeed not featured among the subscripts). In fact, for all $n\geq 2$,
		\begin{multline}\label{eq:unifinq}
		\sup_{q\geq 6} q!^{-1} \var(h^{n,q}_{R,\lambda})
		\leq \sup_{q\geq 6} \int_{B_R}\int_{B_R} | F_{n,\lambda}(d(x,y))| ^q dm_n(x) dm_n(y)\\
		\leq \int_{B_R}\int_{B_R} |F_{n,\lambda}(d(x,y))|^6 dm_n(x) dm_n(y) 
		\\=6!^{-1} \var(h^{n,6}_{R,\lambda})
		\begin{cases}
		\lesssim_{n,R}\lambda^{-\sigma-1/2}, \quad \lambda\to\infty,\\
		\lesssim_{n,\lambda} m_n(B_R), \, \quad R\to\infty,\\
		\end{cases}
		\end{multline}
		so that, together with upper bounds for polyspectra with smaller $q$ (if needed),
		one can deduce asymptotic upper bounds for variances of polyspectra that are \emph{uniform in} $q$.
	\end{remark}
	
	Combining \autoref{thm:cltHermite} and \autoref{thm:fourthmoment}, one deduces the following general result for integral functionals of hyperbolic random waves (see once again the discussion around \eqref{e:conwass1}--\eqref{e:conwass2} in order to appreciate the significance of the conclusion \eqref{e:dwintegral}).
	
	\begin{proposition}\label{prop:cltgeneral}
		Consider the random variable $\G_R(u_\lambda)$, as defined in \eqref{eq:functionalG} for some $G\in L^2(\R, \,  \phi(s)ds)$ 
		with even Hermite rank $q_0\geq 2$. Write $v^2(q_0; R; \lambda) :=  \var(h^{n,q_0}_{R,\lambda})$.
		\begin{itemize}
			\item[{\bf (1)}] Suppose $q_0 \geq 4$. Then, the following asymptotic relations hold:
			\begin{eqnarray}
			&& \var \G_R(u_{\lambda})\simeq_{n,R,G}  v^2(q_0; R; \lambda), \quad \mbox{as } \lambda\to \infty,\\ 
			&& \var \G_R(u_{\lambda})\simeq_{n,\lambda,G}  v^2(q_0; R; \lambda), \quad\mbox{as } R \to \infty.
			\end{eqnarray} 
			Moreover, denoting by $N(R ; \lambda)$ a centered Gaussian random variable with the same variance of $\widetilde{\G_R}(u_{\lambda}) :=\G_R(u_{\lambda})/v(q_0; R; \lambda)$, one has that, both as $\lambda\to\infty$ and as $R\to\infty$,
			\begin{equation}\label{e:dwintegral}
			{\bf W}_1\left( \widetilde{\G_R}(u_{\lambda}), N(R ; \lambda) \right) \longrightarrow 0.
			\end{equation}
			\item[{\bf (2)}] If $q_0=2$, then, writing $a(G) := \frac1{2}\int_\R G(t)H_2(t)\phi(t)dt \neq 0$, one has that
			\begin{equation}\label{e:hr2v}
			\frac{\var \G_R(u_{\lambda})}{a^2(G)\cdot v^2(2; R; \lambda)} \longrightarrow 1, \mbox{ both as } \lambda\to \infty \mbox{ and as }R\to\infty.
			\end{equation}
			Moreover, one has the following bounds: if $R\to \infty$,
			\begin{equation}\label{e:v1}
			{\bf W}_1\left(\frac{\G_R(u_{\lambda})}{  | a(G) | \cdot v(2; R; \lambda)} , N(0,1) \right)     \lesssim_{n,\lambda,G} \frac{1}{R^{1/2}}; 
			\end{equation}
			if $\lambda\to\infty$,
			\begin{equation}\label{e:v2}
			{\bf W}_1\left(\frac{\G_R(u_{\lambda}) }{  | a(G) | \cdot v(2; R; \lambda)} , N(0,1) \right)  \,\, \begin{cases}
			\lesssim_{R,G}\sqrt{ \frac{\log \lambda }{\lambda } } ,\,  \quad n=2,\\
			\lesssim_{R,G} \sqrt{ \frac{\log \lambda }{\lambda^{1/2} }  }, \, \quad n=3,\\
			\lesssim_{n,R,G}\frac{1}{\lambda^{1/4}}, \, \, \,\quad n\geq 4.
			\end{cases}    
			\end{equation}
			\item[{\bf (3)}] If $q_0=4$ and $n=2$, writing $b(G) := \frac1{24}\int_\R G(t)H_4(t)\phi(t)dt \neq 0$,  it holds moreover that,
			as $\lambda\to\infty$,
			\begin{gather}
			\frac{\var \G_R(u_{\lambda})}{b^2(G)\cdot v^2(4; R; \lambda)} \longrightarrow 1,\\
			{\bf W}_1\pa{\frac{\G_R(u_{\lambda}) }{  | b(G) | \cdot v(4; R; \lambda)} , N(0,1)}\lesssim_R \frac1{\log \lambda}.
			\end{gather}
		\end{itemize}
	\end{proposition}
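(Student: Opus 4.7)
The plan is to apply \autoref{thm:fourthmoment} to the Wiener chaos expansion
\[
\G_R(u_\lambda) - \expt{\G_R(u_\lambda)} = \sum_{q\geq q_0} c_q\, h^{n,q}_{R,\lambda} = \sum_{q\geq q_0} I_q(c_q g_q),
\]
with $c_q := \frac{1}{q!}\int_\R G(t) H_q(t)\phi(t)dt$ and $g_q := \int_{B_R} K_{n,\lambda}(x,\cdot)^{\otimes q} dm_n(x)$, reading off the chaos variances $c^2_{j,q} = c_q^2 \var(h^{n,q}_{R,\lambda})$ and contraction norms $\|c_q g_q \otimes_r c_q g_q\| = c_q^2 \|g_q\otimes_r g_q\|$ from \autoref{thm:cltHermite}. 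With the normalization $c^2(j) := v^2(q_0;R;\lambda)$, condition (4) of \autoref{thm:fourthmoment} reduces, for each fixed $q \geq q_0$ and each $r$, to the statement $c_q^2 \|g_q \otimes_r g_q\| = o(v^2(q_0;R;\lambda))$, which is delivered by the last part of \autoref{thm:cltHermite} applied with $q' := q_0$ (an even integer $\leq q$).

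The next step is to choose the partition $(T,T_0)$ and the weights $\sigma_q^2$. In the generic setting of Part (1) --- namely $q_0 \geq 6$, or $q_0 = 4$ with $n \geq 3$, or $q_0 = 4$ with $n = 2$ in the large-domain regime --- I take $T := \{q\text{ even}: q \geq q_0\}$ and $T_0 := \N \setminus T$: the table of \autoref{thm:cltHermite} yields two-sided bounds $\var(h^{n,q}_{R,\lambda}) \simeq (q!/q_0!)\, v^2(q_0;R;\lambda)$ for $q \in T$, and matching-order upper bounds for the odd $q \in T_0$. Picking $\sigma_q^2$ proportional to $c_q^2 q!$ (with the constants inherited from the table), the summability $\sum_q \sigma_q^2 < \infty$ reduces to Parseval's identity $\sum_q c_q^2 q! \leq \|G\|^2_{L^2(\R,\phi)}$, once the $q$-uniformity guaranteed by \autoref{rmk:unifinq} is invoked. \autoref{thm:fourthmoment} then yields both the variance asymptotics and the convergence \eqref{e:dwintegral}. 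For Part (2) the same scheme applies with $T = \{2\}$, since the $q=2$ entries of the table strictly dominate all $q \geq 3$ entries in both regimes; as $c_2 = a(G)$, this proves \eqref{e:hr2v}. The residual case $q_0 = 4$, $n = 2$ in the high-frequency regime needs $T = \{4\}$: the rate $\log\lambda/\lambda$ strictly dominates $\lambda^{-1}$ at $q \geq 6$, and condition (3) of \autoref{thm:fourthmoment} holds after absorbing the $\log\lambda$ slack into $b_2$ for $\lambda$ sufficiently large, while $c_4 = b(G)$ yields the first assertion of Part (3).

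The explicit Wasserstein rates \eqref{e:v1}-\eqref{e:v2} in Part (2) and the $1/\log\lambda$ rate in Part (3) require \emph{quantitative} contraction bounds, in place of the qualitative $o$-statements of \autoref{thm:cltHermite}. The strategy is to truncate the chaos expansion at some level $Q$, control the tail in $L^2$ by Parseval, and apply \autoref{p:sumclt} to the head, obtaining a bound of the form $\Gamma_1(Q)\, \sigma^{-2}\, \max_{q,r} c_q^2 \|g_q \otimes_r g_q\|$. For the dominant chaos $q_0 = 2$ one inserts the explicit rate $X(R,\lambda,n)$ of \eqref{e:q2} together with its high-frequency counterparts, giving ${\bf W}_1 \lesssim \sqrt{X(R,\lambda,n)}$ and hence \eqref{e:v1}-\eqref{e:v2}; for $q \geq 4$ the higher-$q$ analogues indicated at the end of \autoref{r:hr2} contribute contractions of at most the same order. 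Part (3) follows the identical scheme with $q_0 = 4$ as the dominant chaos. The main obstacle is precisely this quantitative bookkeeping: one must revisit the proofs of \autoref{lem:contractionslambda}-\autoref{lem:contractionsr} to extract explicit $\lambda$- and $R$-dependent constants for the contraction norms; the remaining work --- truncation error, tail summation, and reconciling the $Q\to\infty$ and $j\to\infty$ limits --- is routine and mirrors the proof of \autoref{thm:fourthmoment} itself.
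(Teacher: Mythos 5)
Your treatment of Part {\bf (1)} and of the variance asymptotics \eqref{e:hr2v} follows essentially the paper's route: both apply \autoref{thm:fourthmoment} with $c^2(j)=v^2(q_0;R;\lambda_j)$, $\sigma_q^2=q!^{-1}\left(\int_\R G H_q\phi\right)^2$ and $\Sigma_0=\expt{G(N)^2}$, with the $q$-uniform upper bounds for $q>q_0$ supplied by \autoref{rmk:unifinq} and condition (4) supplied by the contraction estimates of \autoref{thm:cltHermite} applied with $q'=q_0$. One caution: you place every even $q\geq q_0$ into $T$, which requires the implied constants in the two-sided bounds to be uniform over $q\in T$ (conditions (1)--(2) of \autoref{thm:fourthmoment} demand a single pair $b_1,b_2$); \autoref{rmk:unifinq} only provides $q$-uniform \emph{upper} bounds. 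The paper sidesteps this by taking $T=\{q_0\}$ and relegating all other orders to $T_0$, and you should do the same.

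The genuine gap is in the quantitative rates \eqref{e:v1}--\eqref{e:v2} and in Part {\bf (3)}. Your head/tail decomposition is the right structure --- it is the paper's splitting ${\bf W}_1\le A+B$ with $Z=a(G)\,h^{n,2}_{R,\lambda}$ the projection onto $H^{:2:}$, $A={\bf W}_1\left(Z/(|a(G)|v),N\right)$ and $B=\var\!\left(\G_R(u_\lambda)-Z\right)^{1/2}\!/(|a(G)|v)$ --- but you then assert that the resulting bound is $\sqrt{X(R,\lambda,n)}$ ``and hence \eqref{e:v1}--\eqref{e:v2}''. That attribution is wrong: $\sqrt{X}$ only controls the term $A$, and in the high-frequency regime it is far smaller than the stated rates (by \eqref{e:q2}, $\sqrt{X}\lesssim_{n,R}\lambda^{-(\sigma+2)/2}\leq\lambda^{-7/4}$ for $n\geq 4$, versus the claimed $\lambda^{-1/4}$), so it cannot be the source of \eqref{e:v2}. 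The rates are in fact governed by the term $B$, i.e.\ by the ratio $\sum_{q\geq 3}c_q^2\var(h^{n,q}_{R,\lambda})\big/\var(h^{n,2}_{R,\lambda})$, which must be read off the variance table: for instance $B^2\lesssim_{n,\lambda,G} m_n(B_R)/(R\, m_n(B_R))=1/R$ in the large-domain regime, giving \eqref{e:v1}, and $B^2\lesssim_{n,R,G}\lambda^{-\sigma-1/2}/\lambda^{-\sigma}=\lambda^{-1/2}$ for $n\geq 4$; the logarithmic corrections for $n=2,3$ enter through the exceptional entries $q=4,n=2$ and $q=n=3$ of the table. You mention controlling ``the tail in $L^2$ by Parseval'' but never carry out this computation, and it is precisely where all the rates in \eqref{e:v1}--\eqref{e:v2} and in Part {\bf (3)} come from; without it the quantitative half of the proposition is not established.
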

	
	In the case $q_0=2$, any $n\geq 2$ and both regimes, and $q_0=4,n=2$, large frequency,
	we are able to obtain quantitative statements since
	the first non-vanishing Hermite projection dominates
	the chaos expansion in the asymptotic regime.
	In the other discussed cases this does not happen,
	and a finer control of the considered functional is required.
	
	\begin{proof}{}
		[Proof of {\bf (1)}] Since the arguments needed to deal with the case $R\to \infty$ are analogous, we will only discuss the limiting regime $\lambda\to\infty$.
		Let $q_0\geq 2$ be the Hermite rank of $\G_R(u_\lambda)$.
		Since we are assuming that $q_0$ is even, the table in the statement of \autoref{thm:cltHermite}, combined with \autoref{rmk:unifinq}, implies the uniform estimate
		\begin{equation*}
		\sup_{q> q_0} q!^{-1} \var(h^{n,q}_{R,\lambda})= O_{n,R}(\var(h^{n,q_0}_{R,\lambda})).
		\end{equation*}
		The conclusion now follows by selecting a sequence $\lambda_j\to\infty$ and by applying \autoref{thm:fourthmoment} to the following special case:
		\begin{itemize}
			\item[(a)] $c^2(j) = v^2(q_0; R; \lambda_j)$;
			\item[(b)] $T_0 = \{q_0\}$;
			\item[(c)] $$ f_{j,q}=\int_{B_R} K_{n,\lambda_j}(x,\cdot)^{\otimes q}dm_n(x)\cdot  q!^{-1} \int_\R G(s)H_q(s)\phi(s)ds;$$
			\item[(d)] $\sigma^2_q = \frac{1}{q!}\left(  \int_\R G(s)H_q(s)\phi(s)ds\right)^2$;
			\item[(e)] $\Sigma_0 = \expt{G(N)^2}$, where $N$ is a standard Gaussian random variable.
		\end{itemize}
		\smallskip
		[Proof of {\bf (2)}] By virtue of \eqref{eq:functionalG}, the projection of ${\G_R}(u_{\lambda})$ onto the second Wiener chaos $H^{:2:}$ is given by $Z:= a(G)\cdot h^{n,2}_{R,\lambda}$. Using the triangle inequality, one has that
		\begin{eqnarray*}
			{\bf W}_1\left(\frac{\G_R(u_{\lambda}) }{  | a(G) | \cdot v(2; R; \lambda)} , N \right) &\leq&  {\bf W}_1\left(\frac{Z }{  | a(G) | \cdot v(2; R; \lambda)} , N \right) \\ &&+\frac{ {\rm Var} ( \G_R(u_{\lambda})-Z )^{1/2} }{   | a(G) | \cdot v(2; R; \lambda)} := A+B.
		\end{eqnarray*}
		A direct application of \eqref{e:x} yields that $A\leq c X(R,\lambda,n)^{1/2}$, where $c$ is some absolute combinatorial constant. On the other hand, one can directly bound $B$ by exploiting the orthogonality of distinct Wiener chaoses together with the asymptotic relations put forward in Theorem \ref{thm:cltHermite}. Combining these estimates with \eqref{e:q2} yields the desired result.
		The [Proof of {\bf (3)}] follows along the same lines,
		the asymptotics of the ratio between contractions and variance being directly deduced from the ones collected in the proof of \autoref{lem:contractionslambda}.\qedhere
	\end{proof}
	
	\subsection{First application: excursion volumes at non-zero levels}\label{ssec:excursion} As before, we write $\phi$ to indicate the standard Gaussian density, and also introduce the notation $\Phi(t) := \int_{-\infty}^t \phi(s)ds$. Our aim in this Section is to use Proposition \ref{prop:cltgeneral} in order to study the asymptotic behavior of random variables of the type
	\begin{equation}\label{e:phit}
	\Phi_{R,\lambda}(t)\coloneqq \int_{B_R} \one_{(-\infty,t]}(u_\lambda(x))dm_n(x) = m_n\{ x\in B_R : u_\lambda(x)\leq t\},
	\end{equation}
	defined for all $t\in \R$ and all $R>0,\lambda\geq \sigma^2$ (where $\sigma^2$ is defined, as usual, in \eqref{e:parispectrum}). 
	
	\smallskip
	
	We will first derive the chaotic expansion of $\Phi_{R,\lambda}(t)$. A direct application of Tonelli's theorem shows immediately that
	\begin{equation*}
	\expt{\Phi_{R,\lambda}(t)}=m_n(B_R) \Phi(t).
	\end{equation*}
	Moreover, for all $t\in \R$ the Hermite polynomial expansion of the function ${\bf 1}_{(-\infty,t]}$, in the space $L^2(\R, \phi(s)ds)$, is given by
	\begin{equation}\label{e:hei}
	{\bf 1}_{(-\infty,t]}(s)=\sum_{q=0}^\infty \frac1{q!}\psi_q(t) H_q(s),
	\end{equation}
	where $\psi_q(t)=-H_{q-1}(t)\phi(t),\,\, q\geq 1$, and $\psi_0(t)=\Phi(t)$.\footnote{
		The functions $\psi_q$ are the \emph{Hermite functions}, forming an
		orthonormal basis of $L^2(\R,dx)$ that diagonalizes the Fourier transform operator on the real line.}
	An immediate, easy consequence of \eqref{e:hei} is also that, for $N$ a standard Gaussian random variable,
	\begin{equation*}
	\var(\chi_{(-\infty,t]}(N))=\Phi(t)(1-\Phi(t))= \sum_{q=1}^\infty \frac{\psi_q(t)^2}{q!}.
	\end{equation*}
	We observe that, since the functions $H_{q-1}$ are odd for even $q\geq 2$, one has that $\psi_q(0)=0$ for all $q\geq 2$ even: in particular, this implies that the Hermite expansion of ${\bf 1}_{(-\infty,t]}$ in the critical case $t=0$ uniquely involves Hermite polynomials of odd order (in such a way that this special case cannot be dealt with by using the results of the present paper). Reasoning as in the previous Section, we also deduce that, for all $t\in\R$, the Wiener chaos expansion of $\Phi_{R,\lambda}(t)$ is given by
	\begin{equation}\label{eq:areachaos}
	\Phi_{R,\lambda}(t)= \sum_{q=0}^\infty A_q(t) h^{n,q}_{R,\lambda}, 
	\quad A_0(t)=\Phi(t), \quad A_q(t)=\psi_q(t)/q!, \, q\geq 1.
	\end{equation}
	where the polyspectra $h^{n,q}_{R,\lambda}$ are defined as in \eqref{eq:polyspectra}.
	
	\smallskip
	
	Combining the above discussion \autoref{prop:cltgeneral} we deduce the following statement:
	
	\begin{proposition}\label{prop:cltexcursion}
		Let the above assumptions and notation prevail and fix $n\geq 1$ and $t\neq 0$. For fixed $R>0$ and $\lambda\to\infty$,
		\begin{equation*}
		\var(\Phi_{R,\lambda}(t)) \simeq_{n,R}  t^2\phi(t)^2\cdot \lambda^{-\sigma},
		\end{equation*}
		whereas, for fixed $\lambda\geq \sigma^2$ and $R\to\infty$,
		\begin{equation*}
		\var(\Phi_{R,\lambda}(t)) \simeq_{n,\lambda} t^2\phi(t)^2 \cdot R \, m_n(B_R).
		\end{equation*}
		Moreover, writing $\widetilde{\Phi}_{R,\lambda}(t) :=\big( \Phi_{R,\lambda}(t)- m_n(B_R)\Phi(t)\big)/ \var(\Phi_{R,\lambda}(t))^{1/2}$, one has the following explicit estimates: if $R\to \infty$, then $${\bf W}_1\left(\widetilde{\Phi}_{R,\lambda}(t)  , N(0,1) \right)\to 0$$ at a speed upper-bounded by the right-hand side of \eqref{e:v1}; if $\lambda\to \infty$, then $${\bf W}_1\left(\widetilde{\Phi}_{R,\lambda}(t)  , N(0,1) \right)\to 0$$ with an upper bound given by the right-hand side of \eqref{e:v2}.
	\end{proposition}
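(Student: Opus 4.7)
The plan is to leverage the Wiener chaos expansion \eqref{eq:areachaos} and reduce the analysis to that of the dominant second-chaos component, to which Proposition \ref{prop:cltgeneral}(2) applies once the first-chaos projection is split off as asymptotically negligible.

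\textbf{Step 1 (variance asymptotics).} By orthogonality of Wiener chaoses,
\begin{equation*}
\var(\Phi_{R,\lambda}(t)) \,=\, \sum_{q\geq 1} A_q(t)^2 \, \var(h^{n,q}_{R,\lambda}).
\end{equation*}
The explicit values $A_1(t)=-\phi(t)$ and $A_2(t)=-t\phi(t)/2$, combined with the table in Theorem \ref{thm:cltHermite}, give that the $q=1$ contribution is at most of order $\phi(t)^2\lambda^{-\sigma-1}$ (resp.\ $\phi(t)^2\,m_n(B_R)$), the $q=2$ contribution is of exact order $t^2\phi(t)^2\lambda^{-\sigma}$ (resp.\ $t^2\phi(t)^2\,R\,m_n(B_R)$), and the tail $\sum_{q\geq 3} A_q(t)^2 \var(h^{n,q}_{R,\lambda})$ is absorbed using Remark \ref{rmk:unifinq} together with the standard bound $|A_q(t)|\leq C\phi(t)/\sqrt{q!}$. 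In both regimes the second-chaos term strictly dominates (by an extra factor $\lambda$ in the high-frequency regime, resp.\ $R$ in the large-domain regime), yielding the two claimed variance asymptotics.

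\textbf{Step 2 (CLT).} Decompose
\begin{equation*}
\Phi_{R,\lambda}(t) - m_n(B_R)\Phi(t) \,=\, A_1(t)\, h^{n,1}_{R,\lambda} \,+\, \Psi_{R,\lambda}(t),
\qquad \Psi_{R,\lambda}(t):=\sum_{q\geq 2} A_q(t)\, h^{n,q}_{R,\lambda}.
\end{equation*}
The summand $\Psi_{R,\lambda}(t)$ is the integral functional $\G_R(u_\lambda)$ attached to the square-integrable function $G(s)=\mathbf{1}_{(-\infty,t]}(s)-\Phi(t)+\phi(t)\,s$, whose Hermite rank is exactly $q_0=2$ because $A_2(t)=-t\phi(t)/2\neq 0$ for $t\neq 0$. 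Proposition \ref{prop:cltgeneral}(2) applied to $\Psi_{R,\lambda}(t)$ then produces both the variance equivalence $\var(\Psi_{R,\lambda}(t))\sim (t^2\phi(t)^2/4)v^2(2;R;\lambda)$ and the quantitative Wasserstein bounds \eqref{e:v1}--\eqref{e:v2} for the suitably normalized $\Psi_{R,\lambda}(t)$.

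\textbf{Step 3 (transfer).} By Theorem \ref{thm:cltHermite}, the standard deviation of the remaining first-chaos piece $A_1(t)h^{n,1}_{R,\lambda}$ is smaller than $\var(\Psi_{R,\lambda}(t))^{1/2}$ by an extra factor of order $\lambda^{-1/2}$ (resp.\ $R^{-1/2}$). Combining the scaling property \eqref{e:wassscale}, the triangle inequality for ${\bf W}_1$, and the elementary inequality ${\bf W}_1(X+Y,X)\leq \mathbb{E}|Y|\leq \var(Y)^{1/2}$, one transfers the CLT from $\Psi_{R,\lambda}(t)$ to $\widetilde{\Phi}_{R,\lambda}(t)$. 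A direct comparison of the extra factors $\lambda^{-1/2}$ and $R^{-1/2}$ against the rates in \eqref{e:v1}--\eqref{e:v2} shows that they are already dominated by (or equal to) those bounds in every stated dimensional regime, so the final rate coincides with the one inherited from Proposition \ref{prop:cltgeneral}(2).

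The main obstacle is really just bookkeeping: one must verify, regime by regime, that the first-chaos remainder is genuinely of smaller order than the CLT rates for $\Psi_{R,\lambda}(t)$ (this reduces to comparing $\lambda^{-1/2}$ with $\sqrt{\log\lambda/\lambda^{2\sigma-1}}$ and $\lambda^{-1/4}$ in the high-frequency regime, and $R^{-1/2}$ with itself in the large-domain regime), so that the triangle-inequality step does not degrade the bounds of Proposition \ref{prop:cltgeneral}(2). No new analytic input beyond Theorem \ref{thm:cltHermite} is required.
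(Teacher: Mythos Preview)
Your proposal is correct and follows essentially the same route as the paper's proof: both split off the first-chaos projection $A_1(t)h^{n,1}_{R,\lambda}$ as asymptotically negligible (using the $q=1$ row of Theorem \ref{thm:cltHermite}) and then apply Proposition \ref{prop:cltgeneral}(2) to the rank-$2$ functional $G(s)=\one_{(-\infty,t]}(s)-\Phi(t)+\phi(t)s$, finishing with a triangle-inequality transfer. Your Step~3 makes explicit the change of normalization from $|a(G)|\,v(2;R;\lambda)$ to $\var(\Phi_{R,\lambda}(t))^{1/2}$, which the paper leaves implicit; the only minor slip is the bookkeeping formula ``$\sqrt{\log\lambda/\lambda^{2\sigma-1}}$'' in your final paragraph, which does not literally match the cases in \eqref{e:v2}, but your conclusion that $\lambda^{-1/2}$ is dominated by each of those rates is correct regardless.
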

	
	\begin{proof}
		The projection on $H^{:1:}$ of $\Phi_{R,\lambda}(t)$ is $P(t, \lambda, R) :=\phi(t)h^{n,1}_{R,\lambda}$; according to Theorem \ref{thm:cltHermite}, one has that $\var{P(t,\lambda, R) /\var(h^{n,2}_{R,\lambda})} \lesssim_{n,R} \lambda^{-1}$, as $\lambda\to \infty$ and $\var{P(t,\lambda, R) /\var(h^{n,2}_{R,\lambda})} \lesssim_{n,\lambda} R^{-2}$, as $R\to \infty$. The result now follows from an application of Proposition \ref{prop:cltgeneral} to the function $G(x) = {\bf 1}_{(-\infty, t]}(x) - \Phi(t) - \phi(t)\, x$ (which has Hermite rank equal to 2), and from the fact that the projection on $H^{:2:}$ of $\Phi_{R,\lambda}(t)$ is $\frac{t}{2}\phi(t)h^{n,2}_{R,\lambda}$.
	\end{proof}
	
	\begin{remark}
		As already recalled, in the nodal case $t=0$ all projections on even Wiener chaoses in \eqref{eq:areachaos} vanish:
		since we only have {\it upper} asymptotic bounds on odd polyspectra, in that case
		we are not able to deduce a CLT for the functional.
		In fact, this is the same issue faced in the context of 2d random spherical harmonics by \cite{Marinucci2011}, to which we refer also for an interesting chaining argument produce a CLT for the normalized functional $\Phi_{R,\lambda}(t)-m_n(B_R)\Phi(t)$ \emph{as a stochastic process indexed by} $t\in\R\setminus\set{0}$.
		Limit theorems at $t=0$ were later derived for random waves on $S^2$
		in \cite{Marinucci2011a} by means of an \emph{ad hoc} argument, that we are not yet able to replicate in our setting. 
	\end{remark}

	In the next Section, we present a direct analysis of the so-called ``Leray measures'' of nodal sets of hyperbolic waves.

	\subsection{Second application: Leray measures of nodal sets}\label{ssec:leray}
	According to our previous discussion, the samples of $u_\lambda$ are smooth functions on $\H^n$, and classical results on stationary Gaussian random fields ensure moreover that, for all $t$, the level set $u_\lambda^{-1}(t)$ is a submanifold of codimension 1 
	almost surely (\emph{cf.} Bulinskaya's Lemma, \cite[Proposition 6.12]{Azais2009}).
	One can thus consider generalized functions on $\H^n$ supported on $u_\lambda^{-1}(t)$,
	the fundamental one being defined as
	\begin{equation}\label{e:lrsmooth}
	\brak{\varphi, \delta_{u_\lambda^{-1}(t)}}= \int_{u_\lambda^{-1}(t)} \varphi(x) d\omega(x),
	\end{equation}
	where $d\omega$ indicates integration with respect to the volume form on $u_\lambda^{-1}(t)$
	induced by the metric of $\H^n$, and with
	brackets denoting duality coupling with smooth functions $\varphi\in C^\infty_c$ (we refer e.g. to \cite[III.1]{Gelfand1964}
	for the classical theory of distributions in this setting).
	
	The aim of this Section is to study the high-frequency and large domain asymptotic behavior of random variables $L_{R,\lambda}$ that are formally obtained from \eqref{e:lrsmooth} by taking $t=0$ and $\varphi (x) =\one_{B_R}(x)$.
	More precisely, for $\lambda\in \left[\pa{\frac{n-1}{2}}^2,\infty\right)$ and $R>0$ we define the {\it Leray measure} of the nodal set of $u_\lambda$ restricted to $B_R$ (also called the {\it occupation density at zero} of $u_\lambda$ on $B_R$) to be the quantity
	\begin{equation}\label{e:trueleray}
	L_{R,\lambda}
	:=\lim_{\epsilon\to 0} \frac1{2\epsilon} m_n\pa{\set{x\in B_R: |u_\lambda(x)|\leq \epsilon}}
	=:\lim_{\epsilon\to 0} L_{R,\lambda}^\epsilon
	\end{equation}
	whenever such a limit is well-defined in the sense of convergence in probability. Heuristically, the Leray measure $L_{R,\lambda}$ is the rescaled volume of the subset of $B_R$ in which $u_\lambda$ takes values that are infinitesimally close to zero --- see the classical reference \cite{GH80} for a general discussion of occupation densities, as well as \cite{Oravecz2008, Peccati2018} for similar studies of the Leray measures associated with arithmetic random waves.

	Should the limit \eqref{e:trueleray} exist in $L^2(\PP)$, one could derive the chaos expansion of $L_{R,\lambda}$ from that of $L_{R,\lambda}^\epsilon$, which is easily seen to be the following:
	\begin{equation*}
	L_{R,\lambda}^\epsilon 
	=\frac1{2\epsilon} \int_{B_R} \one_{[-\epsilon,\epsilon]}(u_\lambda(x))dm_n
	= \frac{\Phi_{R,\lambda}(\epsilon)-\Phi_{R,\lambda}(-\epsilon)}{2\epsilon}
	= \sum_{q=0}^\infty B_q^\epsilon h^{n,q}_{R,\lambda},
	\end{equation*}
	where $h^{n,q}_{R,\lambda}$ is defined in \eqref{eq:polyspectra} and, in the notation of \autoref{prop:cltexcursion},
	\begin{equation*}
	B_q^\epsilon=\frac{A_q(\epsilon)-A_q(-\epsilon)}{2\epsilon},
	\quad \mbox{with  } B_q:=\lim_{\epsilon\to 0} B_q^\epsilon=A_q'(0)=\frac{H_q(0)\phi(0)}{q!}.
	\end{equation*}
	The last asymptotic relation yields that, if \eqref{e:trueleray} holds in $L^2(\PP)$, then the chaos expansion of $L_{R,\lambda}$ is obtained from that of $L_{R,\lambda}^\epsilon$ by replacing each $B^\epsilon_q$ with $B_q$. Here are some additional (useful) remarks on the coefficients $B_q$:
	\begin{itemize}
		\item[--] one can regard the sequence $(B_q)_{q\geq 1}$ as given by the coefficients of a \emph{formal} Hermite decomposition
		\begin{equation*}
		\delta_{0}(Z)=\sum_{q\geq 0}B_q H_q(Z), \quad Z\sim N(0,1),
		\quad B_q=\brak{\delta_0, H_q/q!}_{L^2(\R,\phi(s)ds)},
		\end{equation*}
		where Dirac's delta is regarded as a generalized function on $\R$;
		\item[--] $B_q=0$ for all odd $q\geq 1$, whereas for $\ell \in \N$ one has that
		\begin{equation}\label{eq:Bq}
		B_{2\ell}=\frac{H_{2\ell}(0)}{\sqrt{2\pi (2\ell)!}}, \quad H_{2\ell}(0)=(-1)^\ell(2 \ell-1)!!,
		\end{equation}
		the latter being a consequence of the definition of $H_q$;
		\item[--] the following asymptotic relation is derived from Stirling's formula and \eqref{eq:Bq}:
		\begin{equation*}
		B_{2\ell}^2\simeq \frac1{\sqrt\ell}, \quad \ell\to\infty.
		\end{equation*}
	\end{itemize}
	In order to establish the existence in $L^2(\PP)$ of the limit \eqref{e:trueleray}, we will rely on the following result, which we state and prove in a general setting because of its independent interest.
	
	\begin{proposition}\label{prop:generalleray}
		Let $(Z, \mathscr{Z}, \mu)$ be a finite measure space, 
		and consider a real-valued centered Gaussian field $X(z)$,
		defined on a probability space $(\Omega,\F,\PP)$,
		indexed by $z\in Z$ with $X(z)\sim N(0,1)$ for all $z\in Z$ 
		and covariance function $R(z,z')=\cov(X(z),X(z'))$ (which therefore takes values in $[-1,1]$). For every measurable $C\subset Z\times Z$, one has that
		\begin{equation}\label{e:leray1}
		\sum_{\ell=0}^\infty B^2_{2\ell} \int_{C}  R(z,z')^{2\ell} d\mu^{2}(z,z')
		= \frac{1}{2\pi} \int_{C} \frac{1}{\sqrt{1-R(z,z')^2 }} d\mu^{ 2}(z,z').
		\end{equation}
		Moreover, the limit
		\begin{equation}\label{e:intdelta}
		\mathcal{R}=\int_Z \delta_0(X(z)) d\mu(z) 
		:=\lim_{\epsilon\to 0}\frac1{2\epsilon} \int_Z \one_{[-\epsilon, \epsilon]}(X(z))  d\mu(z),
		\end{equation}
		exists in $L^2(\PP)$ if and only if either side of \eqref{e:leray1} is finite when $C = Z\times Z$.
		In this case, one has that
		\begin{equation}\label{e:leraymean}
		\expt{\mathcal{R}}=\frac{\mu(Z)}{ \sqrt{2\pi}},\quad
		\expt{\mathcal{R}^2}=\frac{1}{2\pi} \int_{Z^2} \frac{1}{\sqrt{1-R(z,z')^2 }} d\mu^2(z,z').
		\end{equation}
	\end{proposition}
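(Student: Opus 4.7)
My plan is to derive the identity \eqref{e:leray1} via Mehler's formula plus Tonelli, establish $L^2$-existence of $\mathcal{R}$ by directly computing $\expt{L^\epsilon L^{\epsilon'}}$ through the bivariate Gaussian density (using Lebesgue differentiation and dominated convergence), and finally obtain the converse through Fatou's lemma. Throughout I write $L^\epsilon$ for the approximating functional featured in \eqref{e:trueleray}.

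For \eqref{e:leray1}, I would start from Mehler's formula for probabilists' Hermite polynomials evaluated at the origin, namely $\sum_{\ell\geq 0} H_{2\ell}(0)^2 \rho^{2\ell}/(2\ell)! = (1-\rho^2)^{-1/2}$ (odd-index terms drop out since $H_{2\ell+1}(0)=0$), valid for $|\rho|<1$ and with both sides diverging to $+\infty$ at $|\rho|=1$. Dividing by $2\pi$ and using the expression for $B_{2\ell}$ in \eqref{eq:Bq} yields the pointwise equality $\sum_\ell B_{2\ell}^2 \rho^{2\ell} = (2\pi\sqrt{1-\rho^2})^{-1}$ as an identity in $[0,+\infty]$. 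Since each $R(z,z')^{2\ell}$ is nonnegative, Tonelli's theorem lets me swap sum and integral on any measurable $C\subset Z\times Z$, producing \eqref{e:leray1}.

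For the forward direction of the equivalence, I would assume $\int_{Z^2}(1-R^2)^{-1/2}d\mu^2<\infty$ and show that $\{L^\epsilon\}_\epsilon$ is Cauchy in $L^2(\PP)$. Via Fubini one has $\expt{L^\epsilon L^{\epsilon'}}=\int_{Z^2}(4\epsilon\epsilon')^{-1}\PP(|X(z)|\leq\epsilon,\,|X(z')|\leq\epsilon')\,d\mu^2(z,z')$. For a centred bivariate Gaussian with unit variances and correlation $R\in(-1,1)$, the joint density $p_R$ is pointwise maximized at the origin, yielding the uniform upper bound $(4\epsilon\epsilon')^{-1}\PP(\cdots)\leq p_R(0,0)=(2\pi\sqrt{1-R^2})^{-1}$. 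The hypothesis makes this majorant $\mu^2$-integrable and forces $|R|<1$ $\mu^2$-a.e.; on this set, Lebesgue differentiation gives pointwise convergence of the integrand to $p_R(0,0)$ as $\epsilon,\epsilon'\to 0$. Dominated convergence then yields $\expt{L^\epsilon L^{\epsilon'}}\to(2\pi)^{-1}\int(1-R^2)^{-1/2}d\mu^2$, so $\expt{(L^\epsilon-L^{\epsilon'})^2}\to 0$ via the polarization identity, and $\mathcal{R}$ exists in $L^2(\PP)$ with the claimed second moment.

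The converse and the mean are handled by symmetric arguments. If $L^\epsilon$ converges in $L^2$ then $\sup_\epsilon \expt{(L^\epsilon)^2}<\infty$, and Fatou applied to the same nonnegative integrand, whose $\liminf$ equals $p_R(0,0)$ on $\{|R|<1\}$ and $+\infty$ on $\{|R|=1\}$ (since there $\PP(|X(z)|\leq\epsilon,|X(z')|\leq\epsilon)=\PP(|X(z)|\leq\epsilon)\sim 2\epsilon\phi(0)$), forces both $\mu^2\{|R|=1\}=0$ and the finiteness of the right-hand side of \eqref{e:leray1}. The mean $\expt{\mathcal{R}}=\mu(Z)/\sqrt{2\pi}$ drops out from a one-variable DCT with the uniform bound $(2\epsilon)^{-1}\int_{-\epsilon}^\epsilon\phi(t)\,dt\leq\phi(0)$. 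The only mild technical obstacle is the uniform pointwise domination of the integrand, which reduces to the fact that the centred bivariate Gaussian density attains its maximum at the origin; beyond this the proof is routine DCT/Fatou bookkeeping.
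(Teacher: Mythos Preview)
Your argument is correct, but it proceeds along a genuinely different route from the paper. For the identity \eqref{e:leray1} the two proofs coincide (the paper writes out the binomial series directly, you invoke Mehler at the origin; these are the same computation). For the equivalence and the moment formulae, however, the paper works entirely through the Wiener chaos expansion: it defines the candidate limit $\mathcal{X}=\sum_\ell B_{2\ell}\int_Z H_{2\ell}(X(z))\,d\mu$, checks it lies in $L^2(\PP)$ under the finiteness hypothesis, and then proves $\expt{(\mathcal{R}(\epsilon)-\mathcal{X})^2}\to 0$ by establishing the coefficient bound $|B^\epsilon_{2\ell}|\leq |B_{2\ell}|$ (via a pointwise inequality for Hermite polynomials) and applying dominated convergence termwise in the chaos. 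The converse is then read off from continuity of chaos projections. You bypass chaos altogether: you compute $\expt{L^\epsilon L^{\epsilon'}}$ directly from the bivariate Gaussian density, dominate by $p_R(0,0)$, and run DCT/Fatou.

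Your approach is more elementary and self-contained, requiring no Hermite inequalities or chaos machinery. What the paper's route buys is that it simultaneously identifies the chaos expansion $\mathcal{R}=\sum_\ell B_{2\ell}\,h^{n,2\ell}_{R,\lambda}$, which is used immediately afterwards in \autoref{prop:leray}; with your proof one would still need a short separate argument (e.g.\ $L^2$-continuity of chaos projections applied to the limit you have constructed) to recover that expansion.
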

	
	\begin{remark}
		A by-product of Proposition \ref{prop:generalleray} is that,
		if $\mathcal{R}$ is well-defined as a limit in $L^2(\PP)$, the set of those $(z,y)\in Z^2$ such that $R(z,y) = \pm 1$
		is necessarily $\mu^2$-negligible.
	\end{remark}
	
	\begin{proof}[Proof of \autoref{prop:generalleray}]
		From \eqref{eq:Bq}, it follows that
		\begin{equation}\label{eq:powerseries}
		\sum_{\ell=0}^\infty B_{2\ell}^2 x^{2\ell} = 
		\frac{1}{2\pi} \sum_{\ell=0}^\infty\binom{2\ell}{\ell} \frac{x^{2\ell}}{4^\ell} 
		= \frac{1}{2\pi} \frac{1}{\sqrt{1-x^2}}, \quad x\in (-1,1).
		\end{equation}
		This directly implies \eqref{e:leray1} by monotone convergence.
		To prove the second part of the statement, 
		for every $\epsilon >0$ denote by $\mathcal{R}(\epsilon)$ the argument of the limit on the right-hand side of \eqref{e:intdelta}:
		$\mathcal{R}(\epsilon)$ is a square-integrable variable and its chaos decomposition is given as before by
		\begin{equation}\label{eq:chaosReps}
		\mathcal{R}(\epsilon)=\sum_{q=0}^\infty B^\epsilon_q \int_Z H_q(X(z))d\mu(z).
		\end{equation}
		We recall that $B^\epsilon_q = B_q=0$ for all $\epsilon >0$ and odd $q$. If either side of \eqref{e:leray1} is finite for $C=Z\times Z$, 
		recalling the elementary relation
		\begin{equation*}
		\expt{H_q(X(z))H_{q'}(X(z'))}= q!R(z,z')^q\cdot \one_{q=q'}, 
		\end{equation*}
		we deduce that the series
		\begin{equation*}
		\mathcal{X} =  \sum_{\ell = 0}^\infty B^2_{2\ell} \int_Z  H_{2\ell}(X(z)) \mu(dz)
		\end{equation*}
		converges in $L^2(\PP)$, and
		\begin{equation*}
		\expt{(\mathcal{R}(\epsilon) -\mathcal{X})^2}
		=\sum_{\ell = 0}^\infty \pa{B^\epsilon_{2\ell}-B_{2\ell}}^2
		\int_Z  \int_Z R(z,y)^{2l} \mu(dz)\mu(dy).
		\end{equation*}
		Since $B^\epsilon_q\xrightarrow{\epsilon\to 0}B_q$
		one can apply the well-known inequality for Hermite polynomials (see \cite[22.14.16]{Abramowitz1965}),
		\begin{equation*}
		\abs{H_{2\ell-1}(x)}\leq \frac{x e^{x^2/4}(2\ell)!}{2^\ell \ell!}, \quad \ell\geq 1,
		\end{equation*}
		from which one infers the estimates
		\begin{equation*}
		\abs{B^\epsilon_{2\ell}}\leq \frac{\phi(\epsilon) e^{\epsilon^2/4}}{2^\ell \ell!}
		\leq \frac{\phi(0)}{2^\ell \ell!}=
		\phi(0)\frac{(2\ell-1)!!}{(2\ell)!}=\abs{B_{2\ell}}, \quad l\geq 1,
		\end{equation*} 
		so that
		\begin{equation*}
		\pa{B^\epsilon_{2\ell}-B_{2\ell}}^2\leq 2 B_{2\ell}^2.
		\end{equation*}
		By dominated convergence, this yields that $\expt{(\mathcal{R}(\epsilon) -\mathcal{X})^2}\to 0$.
		This last relation implies that $\mathcal{R}$ is well-defined in $L^2(\PP)$ and that $\mathcal{R} = \mathcal{X}$. Conversely, if $\mathcal{R}$ is well-defined in $L^2(\PP)$, then its projection on the $q$-th Wiener chaos
		is obtained from the one of $\mathcal{R}(\epsilon)$, as given in \eqref{eq:chaosReps}, by taking the limit $\epsilon\to 0$
		(because of the continuity of the projection map from $L^2(\PP)$ to a closed subspace).
		The conclusions in \eqref{e:leraymean} now follow immediately by combining the relation $B_0=\frac1{\sqrt{2\pi}}$ with \eqref{e:leray1}.
	\end{proof}
	
	The next statement contains the main results of the Section.
	
	\begin{proposition}\label{prop:leray}
		The functional $L_{R,\lambda}$ is well-defined as the $L^2(\PP)$-limit 
		of $L_{R,\lambda}^\epsilon$ (as $\epsilon \to 0$; see \eqref{e:trueleray}) and its chaos expansion is given by
		\begin{equation*}
		L_{R,\lambda} = \sum_{\ell=1}^\infty B_{2\ell} h^{n,2\ell}_{R,\lambda}
		=\frac{m_n(B_R)}{\sqrt{2\pi}}-\frac1{2\sqrt{2\pi}}h^{n,2}_{R,\lambda}+\dots,
		\end{equation*}
		where the series converges in $L^2(\PP)$. The following asymptotic relations hold:
		\begin{equation*}
		\var(L_{R,\lambda})
		\begin{cases}
		\simeq_{n,R}\lambda^{-\sigma}, &\lambda\to \infty,\\
		\simeq_{n,\lambda} R\cdot m_n(B_R), &R\to \infty.
		\end{cases}
		\end{equation*}
		Moreover, setting $\widetilde{L}_{R,\lambda} :=\left(L_{R,\lambda} -\frac{m_n(B_R)}{\sqrt{2\pi}}\right)\var(L_{R,\lambda})^{-1/2}$, one has that $\widetilde{L}_{R,\lambda}$ converges in distribution towards a standard Gaussian random variable, both as
		$\lambda\to \infty$, with $R>0$ fixed, and as $R\to\infty$, with $\lambda\geq\sigma^2$ fixed.
	\end{proposition}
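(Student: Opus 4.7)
The plan is to reduce the statement to three building blocks already available in the paper: the general representation of occupation densities in Proposition \ref{prop:generalleray}, the variance and contraction estimates for polyspectra in Theorem \ref{thm:cltHermite}, and the Wiener-chaos normal approximation theorem (Theorem \ref{thm:fourthmoment}). The four assertions -- $L^2$-existence, chaos expansion, variance asymptotics and CLT -- will be established in this order.

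First, I would apply Proposition \ref{prop:generalleray} with $Z=B_R$, $\mu=m_n$, $X(x)=u_\lambda(x)$, so that the covariance is $R(x,y)=F_{n,\lambda}(d(x,y))$. The integrability of $(x,y)\mapsto 1/\sqrt{1-F_{n,\lambda}(d(x,y))^2}$ on $B_R\times B_R$ is the only non-trivial point. From the ODE \eqref{eq:Fode} one has the Taylor expansion $F_{n,\lambda}(r)=1-\frac{\lambda}{2n}r^2+O(r^4)$, so $\sqrt{1-F_{n,\lambda}^2}\gtrsim r$ near $r=0$, and the polar volume element $\sinh(r)^{n-1}dr\simeq r^{n-1}dr$ absorbs the $1/r$ singularity whenever $n\geq 2$. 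Away from the diagonal, the decay/oscillation estimates for $F_{n,\lambda}$ from Section \ref{sec:moments} show that $\{|F_{n,\lambda}(d(\cdot,\cdot))|=1\}$ intersected with $B_R^2$ has empty interior and that $|F_{n,\lambda}|$ is bounded strictly below $1$ outside a shrinking neighborhood of the diagonal. Proposition \ref{prop:generalleray} therefore yields at once both $L_{R,\lambda}\in L^2(\PP)$ and its chaos expansion $L_{R,\lambda}=\sum_{\ell\geq 0}B_{2\ell}h^{n,2\ell}_{R,\lambda}$.

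For the variance asymptotics I would start from the identity provided by \eqref{e:leraymean}:
\begin{equation*}
\var(L_{R,\lambda})=\frac{1}{2\pi}\int_{B_R\times B_R}\left[\frac{1}{\sqrt{1-F_{n,\lambda}(d(x,y))^2}}-1\right]dm_n^2(x,y).
\end{equation*}
The lower bound $\var(L_{R,\lambda})\geq B_2^2\,\var(h^{n,2}_{R,\lambda})$ follows from orthogonality of Wiener chaoses and matches the claimed order by Theorem \ref{thm:cltHermite}. For the matching upper bound, I would split $B_R^2$ according to $|F_{n,\lambda}|\leq 1/2$ or $|F_{n,\lambda}|>1/2$: on the former, $1/\sqrt{1-F^2}-1\leq cF^2$, contributing an integral proportional to $\var(h^{n,2}_{R,\lambda})$ by \eqref{e:varh}; on the latter, which localizes near the diagonal, the Taylor expansion controls the integrand and the shrinking measure of the bad region (extracted from the fine estimates on $F_{n,\lambda}$ in Section \ref{sec:moments}) produces a term of strictly smaller order in both regimes.

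Finally, for the CLT I would apply Theorem \ref{thm:fourthmoment} to the chaos expansion of $L_{R,\lambda}-m_n(B_R)/\sqrt{2\pi}$. Theorem \ref{thm:cltHermite} shows that, in either regime, the second-chaos term strictly dominates: every even $q\geq 4$ contributes a variance of smaller order than $\var(h^{n,2}_{R,\lambda})$ (the borderline case being $n=2,q=4$ as $\lambda\to\infty$, where $\log\lambda/\lambda=o(\lambda^{-1/2})$). The uniform bound of Remark \ref{rmk:unifinq}, combined with $B_{2\ell}^2\,(2\ell)!\simeq 1/\sqrt\ell$ (a consequence of Stirling's formula), lets me absorb the tail $\sum_{\ell\geq 3}B_{2\ell}^2\var(h^{n,2\ell}_{R,\lambda})$ within a single constant times $\var(h^{n,6}_{R,\lambda})$, which is again of smaller order. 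The contraction estimates supplied by Theorem \ref{thm:cltHermite} give hypothesis (4) of Theorem \ref{thm:fourthmoment}, and the conclusion follows with $T=\{2\}$ and $T_0$ the remaining even indices.

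The main obstacle is the upper bound in the variance step, since it requires precise control of $\int_{B_R^2}[1/\sqrt{1-F_{n,\lambda}^2}-1]dm_n^2$ both where $F_{n,\lambda}$ is close to $\pm 1$ (small scales, treated via the ODE and polar coordinates) and on the bulk where it oscillates and decays (large scales, treated via the estimates of Section \ref{sec:moments}). Once this is in place, the CLT is essentially a corollary of the CLT for the second polyspectrum.
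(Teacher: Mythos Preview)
Your overall strategy matches the paper's, and the first two steps---$L^2$-existence via Proposition~\ref{prop:generalleray}, and the variance asymptotics via the splitting $\{|F_{n,\lambda}|\le 1/2\}$ versus $\{|F_{n,\lambda}|>1/2\}$---are essentially what the paper does through Corollary~\ref{cor:sqrtlambda} and Lemma~\ref{lem:sqrtR}.

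The CLT step, however, contains a genuine gap. You claim that Remark~\ref{rmk:unifinq} together with $B_{2\ell}^2(2\ell)!\simeq 1/\sqrt{\ell}$ lets you bound the tail $\sum_{\ell\ge3}B_{2\ell}^2\var(h^{n,2\ell}_{R,\lambda})$ by a constant times $\var(h^{n,6}_{R,\lambda})$. But Remark~\ref{rmk:unifinq} only gives $(2\ell)!^{-1}\var(h^{n,2\ell})\le 6!^{-1}\var(h^{n,6})$, so the bound you actually obtain is
\[
\sum_{\ell\ge3}B_{2\ell}^2\var(h^{n,2\ell})\ \le\ \frac{\var(h^{n,6})}{6!}\sum_{\ell\ge3}B_{2\ell}^2(2\ell)!\ \simeq\ \var(h^{n,6})\sum_{\ell\ge3}\frac1{\sqrt\ell},
\]
and the last series diverges. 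This does not give a finite constant, and in particular it does not verify hypothesis~(1) of Theorem~\ref{thm:fourthmoment} for any admissible choice of the $\sigma_q^2$. (Relatedly, Proposition~\ref{prop:cltgeneral} does not cover the Leray functional precisely because $\delta_0\notin L^2(\R,\phi(s)ds)$, which is the same obstruction.)

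The fix is already implicit in your variance paragraph: applying the splitting $\{|F|\le1/2\}$ versus $\{|F|>1/2\}$ not to $1/\sqrt{1-F^2}-1$ but to the tail series $g_6(x)=\sum_{\ell\ge3}\binom{2\ell}{\ell}x^{2\ell}/4^\ell$ gives $g_6(F)\le C F^6$ on the first region (since $x^{-6}g_6(x)$ is bounded on $[-1/2,1/2]$) and is controlled near the diagonal on the second, exactly as in Lemma~\ref{lem:taillambda}. This yields directly $\var(L_{R,\lambda})-B_2^2\var(h^{n,2}_{R,\lambda})=o\!\big(\var(h^{n,2}_{R,\lambda})\big)$, which is the content of Corollary~\ref{cor:sqrtlambda} and Lemma~\ref{lem:sqrtR}. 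Once you have this, Theorem~\ref{thm:fourthmoment} is unnecessary: the centered $L_{R,\lambda}$ equals $B_2 h^{n,2}_{R,\lambda}$ plus an $L^2$-negligible remainder, and the CLT follows from \eqref{e:cltpoly} with $q=2$---which is precisely the route the paper takes.
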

	
	\begin{proof} We start by recalling that, according to the table in the statement of \autoref{thm:cltHermite} one has that
		\begin{equation*}
		\var(h^{n,2}_{R,\lambda})
		\begin{cases}
		\simeq_{n,R}\lambda^{-\sigma}, &\lambda\to \infty,\\
		\simeq_{n,\lambda} R\cdot m_n(B_R), &R\to \infty.
		\end{cases}
		\end{equation*}
		Also, \autoref{cor:sqrtlambda} and \autoref{lem:sqrtR} below imply the following bounds:
		\begin{equation*}
		\abs{\int_{B_R^2}
			\frac{dm_n^{\otimes 2}(x,y)}{\sqrt{1-F_{n,\lambda}(d(x,y))^2}}
			-m_n(B_R)^2- \frac12 \var h^{n,2}_{R,\lambda}}
		\begin{cases}
		\lesssim_{n,R}\lambda^{-\sigma-1/2}, &\lambda\to \infty,\\
		\lesssim_{n,\lambda} m_n(B_R), &R\to \infty.
		\end{cases}
		\end{equation*}
		Combining these asymptotic relations, one infers that:
		\begin{itemize}
			\item[--] the random variable $L_{R,\lambda}$ is well-defined in $L^2(\PP)$ (as a direct application of \autoref{prop:generalleray} in the case $Z = B_R$, $\mu = m_n$ and $X = u_\lambda$);
			\item[--] denoting $P[R,\lambda\, ;\, \geq 3]$ the projection of $L_{R,\lambda}$ onto the direct sum $\bigoplus_{q\geq 3} H^{:q:}$, one has that
			$$
			\var(P[R,\lambda\, ;\,  \geq 3]) = o(\var(h^{n,2}_{R,\lambda})),
			$$
			both as $\lambda\to \infty$, with $R>0$ fixed, and as $R\to\infty$, with $\lambda\geq\sigma^2$ fixed. 
		\end{itemize}
		The conclusion now follows immediately from the second part of the statement of \autoref{thm:cltHermite}.
	\end{proof}

	\section{Covariance Functions and their Moments}\label{sec:moments}
	
	The arguments in the previous Section are based on asymptotic estimates of multiple integrals involving
	the covariance function $F_{n,\lambda}(d(x,y))$ of $u_\lambda$, such as the moments
	\begin{equation}\label{eq:momentscovariance}
	C_{R,\lambda}^{n,q}=\var(h^{n,q}_{R,\lambda})=
	\int_{B_R}\int_{B_R} F_{n,\lambda}(d(x,y))^q dm_n(x)dm_n(y),\quad q\geq 0,
	\end{equation}
	and 4-fold integrals appearing in \eqref{eq:contractionsK} as representations of kernel contractions
	in \autoref{thm:fourthmoment}.
	In order to obtain bounds on these integrals we need precise estimates on $F_{n,\lambda}$ itself,
	which we derive in the next paragraph. The proof of \autoref{prop:localbehavior} also requires such estimates,
	so we report it at the end of Subsection \ref{ssec:rasymptotic}, before moving to the main technical arguments of the paper
	in the remainder of the Section.
	
	\subsection{Approximating Covariance Functions} \label{ssec:rasymptotic}
	The following statement collects approximations of $F_{n,\lambda}$ we will employ
	to derive estimates on the double integrals $C_{R,\lambda}^{n,q}$ defined in \eqref{eq:momentscovariance}.
	
	\begin{lemma}\label{lem:Fasymp}
		Let $n\geq 2$, $\alpha\in\R^\ast$, $\lambda=\sigma^2+\alpha^2$, $r\geq 0$.
		It holds
		\begin{enumerate}
			\item (uniform bound)
			\begin{equation}\label{eq:Funif}
			\abs{F_{n,\lambda}(r)}\lesssim_n e^{-\sigma r},
			\end{equation}
			uniformly in $r\geq 0$; 
			moreover, for any $r_0>0$, uniformly in $r\geq r_0$,
			\begin{equation}\label{eq:Fremainder}
			\abs{\sinh(r)^{\sigma} F_{n,\lambda}(r)- \re\bra{c_n(\alpha) \sinh(r)^{i\alpha }}}
			\lesssim_n |\alpha|^{-2}\sinh(r)^{-2},
			\end{equation}
			where
			\begin{equation}\label{eq:hcfunc}
			c_n(\alpha)=\frac{2^{2\sigma-1}\Gamma(i\alpha)\Gamma(\sigma+1/2)}{\sqrt\pi \Gamma(\sigma+i\alpha)},
			\end{equation}
			is bounded for $\alpha\in\R^\ast$ away from zero, and $c_n(\alpha)\simeq_n |\alpha|^{-\sigma}$
			as $|\alpha|\to\infty$;
			\item (decay in $\alpha$ at fixed $r$) for all $r>0$ it holds, as $|\alpha|\to\infty$,
			\begin{equation}\label{eq:Fasymp}
			F_{n,\lambda}(r)= C_{n} \frac{\re[\cosh(r)^{i\alpha}]}{|\alpha|^\sigma \sinh(r)^\sigma}
			+o_{n,r}(|\alpha|^{-\sigma});
			\end{equation}
			\item (approximation with Bessel functions) as $|\alpha|\to\infty$, uniformly in $r> 0$,
			\begin{multline}\label{eq:Fasympclose}
			F_{n,\lambda}(r)=\frac{(2\pi)^{n/2}}{\omega_{n-1}} \sqrt{\frac{r}{\sinh r}}\\
			\cdot \pa{\alpha-\frac1{2i}}^{1-n/2}(\sinh r)^{1-n/2}
			J_{n/2-1}(\alpha r)\pa{1+O_n(1/|\alpha|)}.
			\end{multline}	
		\end{enumerate}
	\end{lemma}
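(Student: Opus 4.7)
My plan is to combine two complementary tools: the integral representation \eqref{eq:Fdef2} (best suited for the large-$|\alpha|$ asymptotics) and the ODE \eqref{eq:Fode} (best suited for the large-$r$ asymptotics). The natural first move is the Liouville substitution $G(r) := \sinh(r)^{\sigma} F_{n,\lambda}(r)$, which removes the expected exponential decay and, after a routine computation using $(n-1)=2\sigma$, turns \eqref{eq:Fode} into the Schr\"odinger-type equation
\begin{equation*}
G''(r) + \bigl[\alpha^2 - \sigma(1-\sigma) \sinh^{-2} r\bigr] G(r) = 0,
\end{equation*}
i.e.\ a perturbation of the harmonic oscillator whose potential decays exponentially at infinity and has a single regular singularity at $r=0$ inherited from \eqref{eq:Fode}. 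This is the normal form in which both the $r\to\infty$ and $|\alpha|\to\infty$ analyses become transparent.

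For item (1), I would first establish \eqref{eq:Funif} directly from \eqref{eq:Fdef2}: bounding the modulus gives $|F_{n,\lambda}(r)|\leq \frac{\omega_{n-2}}{\omega_{n-1}}\int_0^\pi(\cosh r-\sinh r\cos\theta)^{-\sigma}(\sin\theta)^{n-2}d\theta$, and the change of variable $u=\tan(\theta/2)/\tanh(r/2)$ reduces this to a Beta-type integral of size $e^{-\sigma r}$. For the refined expansion \eqref{eq:Fremainder}, I would apply variation of parameters to the Schr\"odinger equation above, writing $G(r) = A(r)e^{i\alpha r}+B(r)e^{-i\alpha r}$ with coefficients solving an integral equation driven by $\sigma(1-\sigma)\sinh^{-2} r$; one integration by parts gains a factor $|\alpha|^{-1}$ and a second yields the claimed remainder of order $|\alpha|^{-2}\sinh^{-2} r$. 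The constant $c_n(\alpha)$ in \eqref{eq:hcfunc} would then be pinned down by connecting the large-$r$ representation with the boundary conditions $F(0)=1$, $F'(0)=0$ at the regular singular point: this is cleanest via the hypergeometric representation $F_{n,\lambda}(r)={}_2F_1(\sigma+i\alpha,\sigma-i\alpha;n/2;-\sinh^2(r/2))$ combined with Gauss' connection formula, from which the Gamma-function ratio in \eqref{eq:hcfunc} falls out directly.

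For items (2)--(3), I would return to \eqref{eq:Fdef2} and perform endpoint stationary phase. The oscillatory factor $(\cosh r -\sinh r\cos\theta)^{i\alpha}$ has its two critical points at the boundary $\theta\in\{0,\pi\}$, and the density $(\sin\theta)^{n-2}$ vanishes there to order $n-2=2\sigma-1$; the standard endpoint stationary-phase expansion thus yields a leading contribution of order $|\alpha|^{-((n-2)+1)/2}=|\alpha|^{-\sigma}$, with the phase at $\theta=0$ producing $\cosh(r)^{i\alpha}$ and its conjugate contribution coming from $\theta=\pi$, consistent with \eqref{eq:Fasymp}. To upgrade this to the uniform Bessel approximation \eqref{eq:Fasympclose}, I would compare the Schr\"odinger ODE for $G$ with the one satisfied by $u(r)=\sqrt{\alpha r}\,J_{n/2-1}(\alpha r)$, namely $u''+[\alpha^2+(1/4-(n/2-1)^2)/r^2]u=0$: the two equations differ by the bounded symbol $\sigma(1-\sigma)[r^{-2}-\sinh^{-2}r]+\tfrac1{4r^2}$, and a Liouville--Green / WKB comparison in weighted norms propagates an $O(1/|\alpha|)$ error uniformly in $r>0$, the prefactor $\sqrt{r/\sinh r}$ arising precisely from the Liouville transformation; the normalization is again fixed by matching at $r\to0$ using $J_{n/2-1}(\alpha r)\sim(\alpha r/2)^{n/2-1}/\Gamma(n/2)$.

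The main technical obstacle is making the error estimate in \eqref{eq:Fasympclose} genuinely uniform on all of $(0,\infty)$: both ODEs are singular at $r=0$, so the comparison must be done in a coordinate system that absorbs the singularity (hence the weight $\sqrt{r/\sinh r}$), while simultaneously absorbing the exponential growth of $\sinh r$ to maintain uniformity for $r\to\infty$. A secondary, more bookkeeping-type difficulty is the correct identification of $c_n(\alpha)$ in \eqref{eq:hcfunc}: the Gamma factors encode the change-of-basis between the Frobenius solutions of the hypergeometric equation adapted to $r=0$ and those adapted to $r=\infty$, so special care with Gauss' formula and the duplication identity for $\Gamma$ is needed to recover exactly the constant displayed in \eqref{eq:hcfunc}.
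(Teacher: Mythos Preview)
Your overall plan is sound and largely parallels the paper's. For item (2), both you and the paper use stationary phase on \eqref{eq:Fdef2} with the two critical points at $\theta\in\{0,\pi\}$ where the amplitude $(\sin\theta)^{n-2}$ vanishes to order $n-2$. For the remainder estimate \eqref{eq:Fremainder} in item (1), the paper does exactly your Liouville substitution and variation-of-parameters (Duhamel) argument, though they close the estimate with van der Corput plus Gr\"onwall rather than two integrations by parts. A small computational slip: the Liouville form is $G''+\bigl[\alpha^2-\sigma(\sigma-1)\sinh^{-2}r\bigr]G=0$, not $-\sigma(1-\sigma)$; the sign is irrelevant for the argument but propagates into your later formula for the difference of potentials, which should simply be $\sigma(\sigma-1)\bigl[\sinh^{-2}r - r^{-2}\bigr]$ with no residual $1/(4r^2)$ term.

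There are two genuine differences in approach. First, for \eqref{eq:Funif} the paper does not bound the integral directly; instead it uses the hypergeometric connection formula \eqref{eq:hypergeomlin1} to obtain the large-$r$ expansion \eqref{eq:harishchandra} (which simultaneously identifies $c_n(\alpha)$), and then appeals to smoothness near $r=0$. Your direct Beta-integral estimate is a valid alternative that avoids special-function identities but then requires a separate computation to pin down $c_n(\alpha)$. Second, and more substantially, for item (3) the paper bypasses the ODE comparison entirely: it recognises \eqref{eq:Fdef2} as a Legendre function $P^{1-n/2}_{i\alpha-1/2}(\cosh r)$ via \eqref{eq:legendreINT} and then quotes the tabulated uniform Legendre--Bessel asymptotic \cite[14.15.13]{Olver2010}. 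Your Liouville--Green comparison is more self-contained and explains \emph{why} the formula holds (the potentials differ by a bounded symbol), but it is considerably more work to make the $O(1/|\alpha|)$ error uniform on all of $(0,\infty)$, precisely the obstacle you flagged; the paper sidesteps this by outsourcing it to the NIST reference.
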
  
	
	\begin{remark} The \emph{Harish-Chandra function} $c_n(\alpha)$ (\emph{cf.} \cite[I.4]{Helgason2000})
		is closely related to the spectral density (introduced in \autoref{thm:spectrum}) by
		\begin{equation*}
		\rho_n(\alpha)=\frac{2^{n-2}}{\omega_{n-1}^2 |c_n(\alpha)|^2}.
		\end{equation*}
		As a function of $\alpha$, $c_n(\alpha)$ can be extended to a meromorphic function on $\C$ with poles on $i\N$.
	\end{remark}
	
	In what follows we regard $F_{\lambda,n}(r)$ as a hypergeometric function
	and make use of a number of known facts on that kind of special functions.
	Section \ref{sec:specialfunctions} collects the formulae we are using, together with precise
	bibliographic references.
	
	We begin by recalling that $F_{\lambda,n}(r)$ is the unique (smooth) solution of the ODE 
	\begin{equation}\label{eq:FODE}
	F_{n,\lambda}(r)''+2\sigma\coth(r)F_{n,\lambda}(r)'+\lambda F_{n,\lambda}(r)=0,
	\quad F_{n,\lambda}(0)=1,\, F_{n,\lambda}'(0)=0,
	\end{equation}
	on the positive real axis $r>0$.
	This is the hyperbolic analogue of the ODE characterizing radial solutions of Helmholtz equation on $\R^n$,
	the Euclidean case being recovered by replacing $\coth(r)$ with $r$.
	
	\begin{remark}
		A relevant difference with Euclidean setting:
		the function $z^{-\nu}J_\nu(z)$ appearing in the covariance of Berry's model \eqref{eq:berrycov} 
		is an entire function on $\C$ coinciding with its power series expansion at $0$, 
		whereas a power series of $F_{n,\lambda}$ at $0$ that one can derive from \eqref{eq:FODE}
		has a finite radius of convergence of order $O(\lambda^{-2\sigma})$ as $\lambda\to\infty$
		(\emph{cf.} \cite[15.2]{Olver2010}).
	\end{remark}
	
	When rewritten in terms of the variable $\sinh^2(r)$, \eqref{eq:FODE} 
	becomes a special case of the hypergeometric equation \eqref{eq:hypergeomODE}:
	it is thus possible to represent $F_{n,\lambda}$ with the principal branch of the hypergeometric function
	\eqref{eq:hypergeomDEF},
	\begin{equation}\label{eq:Fhypergeom}
	F_{n,\lambda}(r)={}_2F_1\pa{\frac{\sigma+i\alpha}2,\frac{\sigma-i\alpha}2,\frac{n}2,-\sinh^2(r)}
	\end{equation}
	(the branch cut of ${}_2F_1$, with respect to its last variable, is $[1,\infty]$).
	We refer to \cite[Section 4.1-4.2]{Borthwick2016} for more details on the representation with hypergeometric functions
	and asymptotics at the boundary on the half-plane model for $n=2$.
	
	\begin{remark}
		The arguments in the remainder of the paper rely on asymptotic properties of $F_{n,\lambda}(r)$,
		which we often deduce comparing \eqref{eq:Fhypergeom} with other special functions, 
		namely Legendre and Bessel functions.
		It is of course possible to obtain those asymptotics directly from the definition and basic properties
		of hypergeometric functions: we refer to \cite{Jones2001} for a proper discussion.
		We choose to proceed through comparison with other special functions appearing in
		the study of random waves on different geometries in order to emphasize analogies
		for readers who are familiar with those other models.
	\end{remark}
	
	\begin{proof}[Proof of \autoref{lem:Fasymp}, item (2)]
		Rewrite the representation \eqref{eq:Fdef2} as an oscillatory integral:
		denoting $t(r)=\tanh(r)$ to lighten notation,
		\begin{gather*}
		F_{n,\lambda}(r)=\frac{\omega_{n-2}}{2\omega_{n-1}} \cosh(r)^{i\alpha-\sigma}
		\int_{-\pi}^\pi e^{i\beta \phi(r,\theta)}\psi_n(r,\theta)d\theta,\\
		\beta=\alpha t(r),\quad
		\phi(r,\theta)=\frac{\log(1+t(r)\cos\theta)}{t(r)},\quad 
		\psi_n(r,\theta)=\frac{\sin(\theta)^{n-2}}{\pa{1+t(r)\cos\theta}^\sigma}.
		\end{gather*}
		Since $|t(r)|<1$ for all $r\in\R$ and $t(r)\sim r$ as $r\to 0$, the phase $\phi$ is uniformly bounded in both variables.
		As a function of $\theta$, it has two simple stationary points at $\theta=0$ and $\pi$, in which
		\begin{equation*}
		\left.\frac{d}{d\theta}\phi(r,\theta)\right|_{\theta=0,\pi}=0,\quad  
		\left.\frac{d^2}{d^2\theta}\phi(r,\theta)\right|_{\theta=0,\pi}=\frac{t(r)\pm 1}{(1\pm t(r))^2}\neq 0, \quad r\in\R.
		\end{equation*}
		For all $r\in\R$, the amplitude $\psi_n(r,\theta)$ is a smooth function of $\theta$, vanishing at
		critical points of the phase, $\theta=0,\pi$, with order $n-2$ and leading coefficient $(1\pm t(r))^{-\sigma}$.
		By the standard stationary phase method, see \cite[Section 6.1]{Bleistein1986}, we deduce the following asymptotic
		\begin{equation*}
		\int_{-\pi}^\pi e^{i\beta \phi(r,\theta)}\psi(r,\theta)d\theta=C_n \beta^{-\sigma}+ o_n(\beta^{-\sigma}),
		\quad \beta\to\infty
		\end{equation*}
		(here $C_n\in\C\setminus\set{0}$, depending on $n$ only).
	\end{proof}
	
	\begin{proof}[Proof of \autoref{lem:Fasymp}, item (1)]
		A power series expansion at large $r$ for ${}_2F_1$ can be deduced from \eqref{eq:hypergeomDEF}
		and \eqref{eq:hypergeomlin1},
		yielding an asymptotic for $r\to\infty$ at any fixed $\alpha\in\R\setminus\set{0}$,
		\begin{equation}\label{eq:harishchandra}
		F_{n,\lambda}(r)=\re\pa{c_n(\alpha)(\sinh r)^{i\alpha-\sigma}} +O_\lambda(\sinh(r)^{1-\sigma}),
		\end{equation}
		where $c_n(\alpha)$ is the \emph{Harish-Chandra function} defined in \eqref{eq:hcfunc}.
		Since $\sinh(r)\simeq e^r$ as $r\to\infty$,
		we can rewrite the asymptotic expression \eqref{eq:harishchandra} as
		\begin{equation*}
		F_\lambda(r)=\sinh(r)^{-\sigma}\pa{c_1(\alpha)\cos(\alpha r)+c_2(\alpha)\sin(\alpha r)}+O_\lambda(e^{-(\sigma+1)r}), 
		\end{equation*}
		with $c_1$ and $c_2$ real functions of $\alpha$. 
		By standard properties of Gamma function (see \cite[5.3]{Olver2010}) 
		we have that $|c_n(\alpha)|$ is uniformly bounded\footnote{
			We refer to \cite[Eqs. 4.4',4.4'']{Strichartz1989} for representations of $|c_n(\alpha)|$,
			for instance in terms of infinite products.} 
		in $\alpha$ away from $0$, so functions $c_1,c_2$ are uniformly bounded away from $0$. 
		
		Since $F_{n,\lambda}$ is a smooth solution of the ODE \eqref{eq:FODE}
		with $F_{n,\lambda}(0)=1$, it is bounded in a neighborhood of $0$:
		combined with the last displayed equation this proves \eqref{eq:Funif}.
		We will now bound the deviation from the main asymptotic by means of the ODE.
		Equation \eqref{eq:FODE} is put into canonical form by the substitution
		\begin{equation*}
		H(r)=\sinh(r)^\sigma F_\lambda(r), \quad H''(r)+b(r)H(r)=0, \quad b(r)=\alpha^2 -\frac{\sigma(\sigma-1)}{\sinh(r)^2}.
		\end{equation*}
		This is the reason why we expressed the exponential behavior of 
		$F_\lambda$ in $r$ in terms of $\sinh(r)$ above.
		Since $b(r)$ converges to $\alpha^2$ for large $r$, it is easy to compare $H$ with the harmonic oscillator of frequency $\alpha$.
		We thus define the remainder
		\begin{equation}\label{eq:remainder}
		R_\lambda(r)=\sinh(r)^\sigma F_\lambda(r)-\pa{c_1(\alpha)\cos(\alpha r)+c_2(\alpha)\sin(\alpha r)},
		\end{equation}
		which we know to be $O_\lambda(e^{-r})$ by the asymptotics above, and that satisfies
		\begin{equation*}
		R_\lambda''(r)+\alpha^2 R_\lambda(r)=\sigma(\sigma-1)\sinh(r)^{\sigma-2}F_\lambda(r),
		\end{equation*}
		as an immediate consequence of the ODE for $H$. The general solution for $R_\lambda(r)$ is given by
		\begin{equation*}
		R_\lambda(r)=A_1 \cos(\alpha r)+A_2 \sin(\alpha r) -\frac1\alpha\int_r^\infty \frac{\sin(\alpha(t-r))}{\sinh(t)^{2-\sigma}}F_\lambda(t)dt,
		\end{equation*}
		in which we can immediately tell that $A_1=A_2=0$ because of the known asymptotics of $F_\lambda$ and $R$ as $r\to\infty$.
		Substituting the definition \eqref{eq:remainder} of $R_\lambda$, we obtain the integral equation
		\begin{multline*}
		R_\lambda(r)=\frac1\alpha \int_r^\infty 
		\frac{\sin(\alpha(t-r))\pa{c_1(\alpha)\cos(\alpha t)+c_2(\alpha)\sin(\alpha t)}}{\sinh(t)^2}dt\\
		-\frac1\alpha \int_r^\infty 
		\frac{\sin(\alpha(t-r))R_\lambda(t)}{\sinh(t)^2}dt.
		\end{multline*}
		The first summand on the right-hand side can be controlled by means of van der Corput Lemma
		(see \cite[Chap. VIII, Sec. 1.2]{Stein1993}, we omit the elementary computation), leading to the estimate
		\begin{equation*}
		|R_\lambda(r)|\lesssim \frac{1}{\alpha^2\sinh(r)^2}+\frac1\alpha \int_r^\infty 
		\frac{|R_\lambda(r)|}{\sinh(t)^2}dt,
		\end{equation*}
		to which we apply Gr\"onwall inequality (we omit again an elementary computation) concluding
		\begin{equation*}
		|R_\lambda(r)|\lesssim \frac{1}{\alpha^2\sinh(r)^2}.
		\end{equation*}
		
		We thus obtained, for $r\geq 0$,
		\begin{equation}\label{eq:errorcontrol}
		\abs{\sinh(r)^\sigma F_\lambda(r)-\pa{c_1(\alpha)\cos(\alpha r)+c_2(\alpha)\sin(\alpha r)}}
		\lesssim \frac{1}{\alpha^2\sinh(r)^2}.\qedhere
		\end{equation}
	\end{proof}
	
	\begin{proof}[Proof of \autoref{lem:Fasymp}, item (3)]
		For any $n\geq 2$, the integral expression \eqref{eq:Fdef2} of $F_{n,\lambda}$
		coincides with a Legendre function of complex degree: from \eqref{eq:legendreINT} we derive
		\begin{equation}\label{eq:Flegendre}
		F_{n,\lambda}(r)=\pa{\frac{2}{\sinh r}}^{n/2-1} \Gamma\pa{\frac{n}{2}} P^{1-n/2}_{i\alpha-1/2}(\cosh r).
		\end{equation}
		Expressing the latter in terms of hypergeometric functions can be done combining \eqref{eq:legendreDEF}
		and \eqref{eq:legendreINT}, thus deriving the hypergeometric representation \eqref{eq:Fhypergeom} in a different way.
		From \eqref{eq:Flegendre}, the thesis follows by a straightforward application of \eqref{eq:legendreandbessel}.
	\end{proof}
	
	Item (3) of \autoref{lem:Fasymp} conveniently relates the hyperbolic spherical function
	with Bessel's function appearing in \eqref{eq:berrycov}, in the high-frequency limit.
	In fact, \eqref{eq:Fasympclose} plays a role akin to the one of Hilb's asymptotic for Legendre polynomials
	in the context of random wave models on the sphere $S^n$, see \cite{Marinucci2015},
	and we can immediately deduce the result on local behavior of $u_\lambda$ we stated in \autoref{prop:localbehavior}.

	\begin{proof}[Proof of \autoref{prop:localbehavior}]
		Denote
		\begin{equation*}
		d_{r,\lambda}= d\pa{\exp_x(v/\sqrt \lambda),\exp_x(v'/\sqrt \lambda)}.
		\end{equation*}
		By invariance under rotations we reduce ourselves to the case
		in which $v,v'\in\R^n$ lie on the plane $\R^2\times \set{0}\subset\R^n$ and 
		have polar coordinates respectively $(r,0),(cr,\theta)$, 
		with $r>0$, $0<c<1$ and $\theta\in [0,\pi]$. 
		The exponential map becomes identity when expressed in polar coordinates
		both on its domain and on the image $\H^n$ (\emph{i.e.} with hyperbolic polar coordinates centered at $x$).
		By the first hyperbolic law of cosines
		\begin{align*}
		\cosh {d_{r,\lambda}}
		&=\cosh\frac{r}{\sqrt\lambda}\cosh\frac{cr}{\sqrt\lambda}
		-\sinh\frac{r}{\sqrt\lambda}\sinh\frac{cr}{\sqrt\lambda}\cos\theta\\
		&=1+\frac{r^2}{2\lambda}\pa{1+c^2-2c\cos\theta}+O\pa{\frac{r^4}{\lambda^2}},
		\end{align*}
		the second step coming from first order expansion in $r/\sqrt\lambda=o(1)$. 
		Notice that $1+c^2-2c\cos\theta=|1-ce^{i\theta}|^2$ is uniformly bounded by 2.
		We are thus expressing the fact that the local behavior of hyperbolic and Euclidean distance is the same:
		\begin{equation*}
		d_{r,\lambda}
		=\frac{r}{\sqrt\lambda}|1-ce^{i\theta}|+O\pa{\frac{r^3}{\lambda^{3/2}}}
		=\frac{|v-v'|}{\sqrt\lambda}+O\pa{\frac{r^3}{\lambda^{3/2}}}, \quad \lambda\to\infty.
		\end{equation*}
		
		We conclude the proof combining \eqref{eq:Fasympclose} with the asymptotic we obtained for $d_{r,\lambda}$:
		\begin{multline*}
		F_\lambda(d_{r,\lambda})= \frac{(2\pi)^{n/2}}{\omega_{n-1}}
		\pa{\frac{\alpha|v-v'|}{\sqrt\lambda}}^{1-n/2}
		J_{n/2-1}\pa{\frac{\alpha|v-v'|}{\sqrt\lambda}}\pa{1+O(r/|\alpha|)}\\
		=C_{n,1}(v,v') \pa{1+O(r/|\alpha|)},
		\end{multline*}
		where the last step simply uses the definition of Berry's covariance function \eqref{eq:berrycov}
		and $\sqrt{\lambda}=\sqrt{\alpha^2+\sigma^2}\simeq \alpha$.
	\end{proof}
	
	\subsection{Double Integrals on Hyperbolic Balls}\label{ssec:changeofvariables}
	Given a measurable function $f:[0,\infty)\to [0,\infty)$, we are interested in the integral of $f(d(z,w))$
	over $x,y\in B_R$, the ball of $\H^n$ of radius $R$
	(to fix ideas one can consider the one centered at the origin)
	with respect to the hyperbolic volume, that is
	\begin{equation*}
	I(f,R)=\int_{B_R}\int_{B_R} f(d(x,y)) dm_n(x)dm_n(y).
	\end{equation*}
	To deal with this kind of integrals, we use once again hyperbolic polar coordinates.
	Let $(r,\vartheta)$ be polar coordinates of $x$ with respect to the center of $B_R$,
	and $(s,\vartheta')\in [0,\infty)\times S^{n-1}$ be coordinates of $y$ with respect to $x$;
	we can write
	\begin{equation*}
	I(f,R)
	= C_n \int_{S^{n-1}}d\varsigma_{n-1}(\vartheta)\int_0^R \sinh(r)^{n-1}dr \int_{B_R}  dm_n(y) f(d(x,y)),
	\end{equation*}
	in which the innermost integral does not depend on $\vartheta\in S^{n-1}$,
	thus neither does the integral in $dr$.
	We can thus fix a particular (arbitrary) $\vartheta$, 
	replace the outer integral with $\omega_{n-1}$ and write
	\begin{equation*}
	I(f,R)=C_n \int_0^R \sinh(r)^{n-1}dr \int_{S^{n-1}}d\varsigma_{n-1}(\vartheta') \int_0^{R(r,\vartheta')} \sinh(s)^{n-1}ds f(s),
	\end{equation*}
	where $R(r,\vartheta')$ is the hyperbolic length of 
	the geodesic arc leaving $x=(r,\vartheta)$ in direction $\vartheta'$
	and ending and the boundary $\partial B_R$.
	The function $R(r,\vartheta')$ of course varies between its minimum $R(r,\vartheta)=R-r$ and maximum $R(r,-\vartheta)=R+r$.
	If $f$ is non-negative, replacing $R(r,\vartheta')$ with one of these two values
	(and integrating out the now muted variable $\vartheta'$) 
	provides respectively a lower and an upper bound for the integral.
	
	\begin{remark}
		By means of hyperbolic trigonometry (we refer to \cite[Section 5.6]{Anderson2005}) 
		one can explicitly represent the function $R(r,\vartheta')$;
		for instance in the case $n=2$, fixing $\vartheta=0\in [0,2\pi]\simeq S^1$, it holds
		\begin{equation*}
		\cosh\pa{R(r,\vartheta')}=\frac{\cosh(R)\cosh(r)+\cos(\vartheta') \sinh(r)\sqrt{\sinh^2(R)-\sin^2(\vartheta')\sinh^2(r)}}{1+\sin^2(\vartheta') \sinh^2(r)},
		\end{equation*}
		$r>0, \vartheta'\in [0,2\pi]\simeq S^1$.
	\end{remark}
	
	\subsection{Asymptotics for large \texorpdfstring{$\lambda$}{lambda}, fixed \texorpdfstring{$R$}{R}}
	When the domain of integration is fixed, for moments of order $q\geq 2$
	we can determine the asymptotic of $C^{n,q}_{R,\lambda}$ as $\lambda\uparrow\infty$
	by separating the contributions of regions in which integrating variables $x,y\in B_R$ are respectively closer 
	or farther than $1/\alpha$ with respect to each other.
	To this end, the above change of variables reduces our task to control one-dimensional integrals
	close to and far from the lower extremum of integration.
	We can proceed this way since for $q\geq 2$ we can derive good controls simply from the exponential decay of
	$F_{n,\lambda}$. 
	
	For odd $q\geq 3$, oscillations provide additional cancellations
	and we will thus establish only an upper bound that suffices to our needs.
	However, in the special case of $q=1$ the decay of $F_{n,\lambda}$ is outweighed by
	the volume element, so the oscillations play a determinant role and can not be neglected.
	We thus need a more careful analysis in that case, 
	so we will discuss it separately by means of Fourier analysis on $\H^n$.
	
	\begin{lemma}\label{lem:momentlambda}
		As $\lambda\to\infty$, for a fixed $R>0$, we have the following asymptotics:
		\begin{itemize}
			\item ($q=1$, any $n\geq 2$) $C_{R,\lambda}^{n,1}\lesssim_R |\alpha|^{-2\sigma-2}$;
			\item ($q=2$, any $n\geq 2$) $C_{R,\lambda}^{n,2}\simeq_R |\alpha|^{-2\sigma}$;
			\item (even $q\geq 2$, any $n\geq 2$) $C_{R,\lambda}^{n,q}\simeq_R |\alpha|^{-2\sigma-1}$
			\emph{except for the single case} ($n=2,q=4$) for which $C_{R,\lambda}^{2,4}\simeq_R |\alpha|^{-2}\log(|\alpha|)$;
			\item (odd $q\geq 2$, any $n\geq 2$) $C_{R,\lambda}^{n,q}\lesssim_R |\alpha|^{-2\sigma-1}$
			\emph{except for the single case} ($q=n=3$) for which $C_{R,\lambda}^{3,3}\lesssim_R |\alpha|^{-3}\log(|\alpha|)$.
		\end{itemize}
	\end{lemma}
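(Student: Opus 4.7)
The strategy is to reduce the double integral $C^{n,q}_{R,\lambda}$ to a one-dimensional radial integral via the polar change of variables of subsection \ref{ssec:changeofvariables}: writing $y$ in hyperbolic polar coordinates based at $x$, one obtains, for $q\geq 2$, the comparison $C^{n,q}_{R,\lambda}\simeq_R \int_0^{2R}\sinh(s)^{n-1}F_{n,\lambda}(s)^q\,ds$, where the upper bound uses $R(r,\vartheta')\leq R+r\leq 2R$ and the lower bound comes from restricting to $r\leq R/2$ (on which $R(r,\vartheta')\geq R/2$). The entire analysis then rests on the asymptotic expansions of $F_{n,\lambda}$ collected in \autoref{lem:Fasymp}, split at the critical scale $s=|\alpha|^{-1}$: on $[0,|\alpha|^{-1}]$ the Bessel approximation \eqref{eq:Fasympclose} and the uniform bound \eqref{eq:Funif} give $|F_{n,\lambda}|\lesssim 1$, so the contribution is at most $|\alpha|^{-n}=|\alpha|^{-2\sigma-1}$, which is harmless; on $[|\alpha|^{-1},2R]$ one substitutes the sharp asymptotic \eqref{eq:Fremainder}, namely
\[
F_{n,\lambda}(s)=\sinh(s)^{-\sigma}\re\bigl[c_n(\alpha)\sinh(s)^{i\alpha}\bigr]+O\bigl(|\alpha|^{-2}\sinh(s)^{-\sigma-2}\bigr),
\]
with $|c_n(\alpha)|\simeq|\alpha|^{-\sigma}$.

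Raising the principal term to the $q$-th power and expanding via $(2\cos\theta)^q=\sum_k\binom{q}{k}e^{i(q-2k)\theta}$ produces a constant coefficient of size $|c_n(\alpha)|^q\simeq|\alpha|^{-q\sigma}$ (present only for even $q$) plus oscillatory cosines of $k\alpha\log\sinh s$ with $k$ odd (resp.\ nonzero even) when $q$ is odd (resp.\ even). Setting $\mu=(n-1)(q/2-1)$ so that the amplitude becomes $\sinh(s)^{-\mu}$, the constant piece is tested against $\int_{|\alpha|^{-1}}^{2R}\sinh(s)^{-\mu}\,ds$, which is $\simeq_R 1$ for $\mu=0$ (case $q=2$), $\simeq\log|\alpha|$ for $\mu=1$ (which among even $q$ forces $n=2,q=4$), and $\simeq_R|\alpha|^{\mu-1}$ for $\mu>1$ (generic even $q\geq 4$). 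After multiplication by the $|\alpha|^{-q\sigma}$ prefactor this yields respectively $|\alpha|^{-2\sigma}$, $|\alpha|^{-2}\log|\alpha|$, and $|\alpha|^{-2\sigma-1}$; the oscillatory contributions, estimated by one integration by parts against the phase $\alpha\log\sinh s$ (whose derivative $\alpha\coth s$ is monotone and nonvanishing), are smaller by a factor $|\alpha|^{-1}$. Matching lower bounds follow from the positivity of $\binom{q}{q/2}/2^q|c_n(\alpha)|^q$ combined with the polar lower envelope.

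For odd $q\geq 3$ the principal term has no constant piece, so every summand oscillates. The trivial absolute-value estimate $|F_{n,\lambda}(s)|^q\lesssim|\alpha|^{-q\sigma}\sinh(s)^{-q\sigma}$ already yields the target $|\alpha|^{-2\sigma-1}$ whenever $\mu>1$ strictly, while the single odd boundary $\mu=1$ is $n=q=3$, giving the logarithmic correction $|\alpha|^{-3}\log|\alpha|$. The only subcase not covered by the trivial estimate is $q=3,n=2$ (where $\mu=1/2<1$, so direct bounding gives only $|\alpha|^{-3/2}$); here one integration by parts on each oscillatory summand, controlling boundary terms at $s=|\alpha|^{-1}$ and the resulting derivative integrand on the bulk, extracts the additional factor $|\alpha|^{-1}$ needed to reach $|\alpha|^{-2\sigma-1}=|\alpha|^{-2}$.

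The case $q=1$ is structurally different because the volume factor $\sinh(r)^{n-1}$ grows faster than $|F_{n,\lambda}|$ decays. One instead uses rotational symmetry: from the stochastic integral representation \eqref{eq:stochint} together with the fact that $\int_{B_R}\e_n(x,\alpha,\vartheta)\,dm_n(x)$ is, by the $SO(n)$-isotropy of $B_R$, constant in $\vartheta\in S^{n-1}$ and equal (upon averaging over $\vartheta$ and invoking \eqref{eq:Fdef1}) to a constant multiple of $M_{n,\lambda}(R):=\int_0^R F_{n,\lambda}(r)\sinh(r)^{n-1}\,dr$, one deduces $C^{n,1}_{R,\lambda}\simeq_R|M_{n,\lambda}(R)|^2$. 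Multiplying the ODE \eqref{eq:Fode} by $\sinh(r)^{n-1}$ yields the conservation law $\tfrac{d}{dr}\bigl[\sinh(r)^{n-1}F_{n,\lambda}'(r)\bigr]=-\lambda\sinh(r)^{n-1}F_{n,\lambda}(r)$, so integration from $0$ to $R$ (using $F_{n,\lambda}'(0)=0$) collapses the radial integral to $M_{n,\lambda}(R)=-\lambda^{-1}\sinh(R)^{n-1}F_{n,\lambda}'(R)$; differentiating \eqref{eq:Fremainder} gives $|F_{n,\lambda}'(R)|\lesssim_R|\alpha|^{1-\sigma}$, whence $|M_{n,\lambda}(R)|\lesssim_R|\alpha|^{-\sigma-1}$ and the claim. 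The main obstacle throughout is the careful bookkeeping at the matching scale $s\asymp|\alpha|^{-1}$: one must verify that the Bessel (inner) and Harish-Chandra-type (outer) expansions glue without producing spurious error terms, and confirm that the two exceptional logarithmic cases emerge from precisely the boundary relation $(n-1)(q/2-1)=1$ and from no other source.
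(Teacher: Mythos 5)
Your treatment of $q\geq 2$ follows the paper's route essentially verbatim: the polar reduction of Subsection \ref{ssec:changeofvariables}, the split at $s=|\alpha|^{-1}$, the inner contribution of order $|\alpha|^{-2\sigma-1}$, and an outer contribution governed by $\int_{1/|\alpha|}^{2R}\sinh(s)^{-\mu}ds$ with $\mu=\sigma(q-2)$, so that the logarithmic exceptions are exactly the boundary cases $\mu=1$. Your $q=1$ argument, however, is genuinely different and rather elegant: the paper computes $C^{n,1}_{R,\lambda}=C_n|f_R(\alpha)|^2$ and then estimates the radial oscillatory integral by stationary phase, whereas your conservation law $\tfrac{d}{dr}\bigl[\sinh(r)^{n-1}F_{n,\lambda}'(r)\bigr]=-\lambda\sinh(r)^{n-1}F_{n,\lambda}(r)$ collapses that integral exactly to $-\lambda^{-1}\sinh(R)^{n-1}F_{n,\lambda}'(R)$, replacing stationary phase by a pointwise derivative bound. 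The one step there that is not legitimate as written is ``differentiating \eqref{eq:Fremainder}'': a sup-norm asymptotic for a function does not bound its derivative. The estimate $|F_{n,\lambda}'(R)|\lesssim_R|\alpha|^{1-\sigma}$ you need is true, but must be obtained separately, e.g.\ by differentiating the integral equation for the remainder $R_\lambda$ in the proof of \autoref{lem:Fasymp}, or from the derivative of the Bessel approximation \eqref{eq:Fasympclose}.

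Two further points. For the matching lower bounds with even $q\geq 4$ (other than $n=2$, $q=4$), deriving them from the positive constant Fourier coefficient $\binom{q}{q/2}2^{-q}|c_n(\alpha)|^{q}$ is delicate: writing $F_{n,\lambda}=P+E$ with $P$ the principal oscillatory term and $E$ the remainder in \eqref{eq:Fremainder}, the cross terms $|P|^{q-1}|E|\sinh(s)^{2\sigma}$ integrate over $[1/|\alpha|,2R]$ to $O(|\alpha|^{-\sigma-1})$, which \emph{dominates} the target $|\alpha|^{-2\sigma-1}$, so the stated error bound cannot certify that the constant piece survives. The paper sidesteps this: for even $q$ the integrand is nonnegative, and the inner region $[0,1/|\alpha|]$, where $F_{n,\lambda}^{q}\geq 1-\epsilon$, already yields the lower bound $\gtrsim_R|\alpha|^{-2\sigma-1}$; the constant-coefficient mechanism is only needed for $q=2$ and $(n,q)=(2,4)$, where the outer region dominates. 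Conversely, you are right that for $q=3$, $n=2$ the crude absolute-value bound only gives $|\alpha|^{-3/2}$ and an integration by parts against the phase is required to reach $|\alpha|^{-2}$ --- a point on which your write-up is actually more careful than the paper's.
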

	
	\begin{remark}\label{rmk:n=q=3}
		The two exceptional cases ($n=2,q=4$) and ($q=n=3$) correspond, 
		as we show below, to critical choices of the parameters.
		However, for ($n=2,q=4$) a logarithmic correction appears in the \emph{true} asymptotic, 
		whereas in the other case the factor $\log \alpha$ appearing in the upper bound we state
		is \emph{not} present in the real asymptotic, 
		as it is canceled by oscillations that persist due to $q=3$ being odd.
		This last fact is irrelevant in our scope, a careful (and lengthy) estimate would be required to prove it,
		so we do not discuss it any further.
	\end{remark}
	\noindent
	We will assume $\alpha>0$, the negative case being identical.
	
	\begin{proof}[Proof of case $q=1$.]
		Assume that $B_R$ is centered at the origin of $\H^n$;
		since the indicator function $\one_{B_R}(x)=\one_{[0,R]}(d(x,o))$ is radial, so is its Fourier transform
		\begin{align*}
		f_R(\alpha)&=\F\one_{B_R}(\alpha,\vartheta)=\int_{B_R} \e_n(x,-\alpha,\vartheta)dm_n(x)
		=c_n \int_0^R F_{n,\lambda}(r) (\sinh r)^{n-1} dr\\
		&=c_n \int_0^R \int_0^\pi \pa{\cosh r-\sinh r \cos\theta}^{-\sigma+i\alpha} (\sin \theta)^{n-2}(\sinh r)^{n-1} d\theta dr, 
		\end{align*}
		(the latter descending directly from definitions in Section \ref{sec:Hd} and change of variables).
		Standard stationary phase analysis (see \cite[8.4]{Bleistein1986})
		allows to obtain asymptotics of the oscillatory integral in display:
		we have $f_R(\alpha)=O_R(\alpha^{-\sigma-1})$ for $\alpha\to\infty$
		(see Subsection \ref{ssec:rasymptotic} for completely analogous computations with details).
		
		We recall \eqref{eq:Fdef1} in the form
		\begin{equation*}
		F_{n,\lambda}(d(x,y))
		=\frac1{\omega_{n-1}}\int_{S^{n-1}} \e_n(x,\alpha,\vartheta)\e_n(y,-\alpha,\vartheta) d\varsigma_{n-1}(\vartheta).
		\end{equation*}
		Finally, we will use the (formal) orthogonality relation
		\begin{equation*}
		\rho_n(\alpha)\int_{\H^n} \e_n(x,\alpha,\vartheta)\e_n(x,-\alpha',\vartheta')dm_n(x)
		=\delta_{\alpha=\alpha'}\delta_{\vartheta=\vartheta'}
		\end{equation*}
		in our computation: it can be derived by applying $\F \F^{-1}$ to the right-hand
		side, regarded as a Dirac delta on $\R^+\times S^{n-1}$. 
		A standard mollification argument 
		--or the extension of Fourier transform to distributions-- allows to make it rigorous,
		we omit details for the sake of brevity.
		Combining the formulas above and Fourier inversion (\autoref{prop:fourier}), the thesis follows from:
		\begin{align*}
		C_{R,\lambda}^{n,1}
		&= \int_{(\H^n)^2}\one_{B_R}(x)\one_{B_R}(y) F_{n,\lambda}(d(x,y)) dm_n(x)dm_n(y)\\
		&= C_n \int_{(\H^n)^2}dm_n^2(x,y) \int_{(S^{n-1})^3} d\varsigma_{n-1}^3(\vartheta,\vartheta',\vartheta'')
		\int_{(\R^+)^2}\rho_n(\alpha')\rho_n(\alpha'')d\alpha'd\alpha''\\
		&\qquad \times f_R(\alpha')f_R(\alpha'') \e_n(x,\alpha',\vartheta') \e_n(y,\alpha'',\vartheta'')
		\e_n(x,\alpha,\vartheta)\e_n(y,-\alpha,\vartheta)\\
		&= C_n \int_{(S^{n-1})^3} d\varsigma_{n-1}^3(\vartheta,\vartheta',\vartheta'')
		\int_{(\R^+)^2}\rho_n(\alpha')\rho_n(\alpha'')d\alpha'd\alpha''\\
		&\qquad \times f_R(\alpha')f_R(\alpha'') \delta_{\alpha=-\alpha'}\delta_{\vartheta=\vartheta'}
		\delta_{\alpha=\alpha''}\delta_{\vartheta=\vartheta''} \rho_n(\alpha)^{-2} = C_n |f_R(\alpha)|^2.
		\qedhere
		\end{align*}
	\end{proof}
	
	\begin{remark}\label{rmk:zerosvariance}
		The Fourier transform $f_R(\alpha)$ of $\one_{B_R}$ is in fact an oscillating function,
		and it has a discrete, countable set of zeros on $\R^+$.
		In sight of the discussion in Subsection \ref{ssec:polyspectra}, this means that the variance of $h^{n,1}_{R,\lambda}$
		vanishes for infinite values of $\alpha$, at which one can not divide by the variance in order to
		study a Central Limit Theorem.
		The same might happen for all other odd orders $q\geq 3$, and this is an open question
		even for random wave models on other geometries: it was conjectured in \cite{Marinucci2011} that variances of odd polyspectra do not vanish in the high 
		of random spherical
		Substantiating this claim would require a much more careful control of $C_{R,\lambda}^{n,q}$ for odd orders $q$,
		which we are not yet able to produce.
	\end{remark}
	
	\begin{proof}[Proof of case $q\geq 2$, upper bound.]
		By Subsection \ref{ssec:changeofvariables} and \autoref{lem:Fasymp},
		\begin{align}\label{eq:momentcomp1}
		C_{R,\lambda}^{n,q}	&\lesssim_{n,R} \int_0^{2R} |F_{n,\lambda}(s)|^q \sinh(s)^{2\sigma} ds\\ \nonumber
		&\lesssim_n \int_0^{1/\alpha} \sinh(s)^{2\sigma} ds
		+\frac1{\alpha^{q\sigma}} \int_{1/\alpha}^{2R} \sinh(s)^{\sigma (2-q)}ds=:(A)+(B),
		\end{align}
		where the second step uses the fact that $F_{n,\lambda}$ is uniformly bounded 
		to control the integral over small $s<1/\alpha$,
		and \eqref{eq:Fasymp} for large values of $s$, neglecting trigonometric oscillations.
		The asymptotic behavior as $\alpha\to \infty$ 
		is then determined by recalling that $\sinh(x)\sim x$ as $x\to 0$:
		summand $(A)$ on the right-hand side is of order $\alpha^{-2\sigma-1}$, 
		so we need to discuss whether $(B)$ provides a relevant correction.
		
		The integrand of $(B)$ is integrable at $0$ when $\sigma (2-q)>-1$,
		which is the case only when:
		\begin{itemize}
			\item ($q=2$, any $n\geq 2$) in this case it actually is $\sigma (2-q)=0$, the second integral
			can be bounded with the one over $[0,2R]$, thus it is of order $|\alpha|^{-2\sigma}$,
			and it prevails in the asymptotic;
			\item ($q=3,n=2$) that is $\sigma (2-q)=-1/2<0$,
			so summand $(B)$ does not provide a correction to $(A)$.
		\end{itemize}
		The case $\sigma (2-q)=-1$ is realized in two cases:
		\begin{itemize}
			\item ($q=4,n=2$)
			\begin{equation*}
			C_{R,\lambda}^{2,4}\lesssim_R \frac1{\alpha^2} +
			\frac1{\alpha^2} \int_{1/\alpha}^{2R} \sinh(s)^{-1}ds\simeq_R \frac{\log \alpha}{\alpha^2};
			\end{equation*}
			\item ($q=n=3$)
			\begin{equation*}
			C_{R,\lambda}^{3,3}\lesssim_R \frac1{\alpha^3} +
			+\frac1{\alpha^3} \int_{1/\alpha}^{2R} \sinh(s)^{-1}ds\simeq_R \frac{\log \alpha}{\alpha^3} \lesssim_R \frac1{\alpha^3}.
			\end{equation*}
		\end{itemize}
		Finally, if $\sigma (2-q)<-1$,
		\begin{equation}\label{eq:momentcomp2}
		(B)=\frac1{\alpha^{q\sigma}} \int_{1/\alpha}^{2R} \sinh(s)^{\sigma (2-q)}ds\simeq_R \frac{1}{\alpha^{2\sigma+1}}
		\end{equation}
		has the same asymptotic behavior of $(A)$.
	\end{proof}
	
	\begin{proof}[Proof of case $q\geq 2$ and even, lower bound.]
		Once again we apply the argument of Subsection \ref{ssec:changeofvariables}:
		this time we replace $R(r,\theta)$ with its minimum $R-r$, 
		\begin{multline}\label{eq:lowerboundC}
		C_{R,\lambda}^{n,q}
		\gtrsim \int_0^R \int_0^{R-r} F_{n,\lambda}(s)^q \sinh(s)^{2\sigma} \sinh(r)^{2\sigma} ds dr\\
		=\int_0^R g_R(s) \sinh(s)^{2\sigma} F_{n,\lambda}(s)^q ds,\\
		g_{n,R}(s)=\int_0^{R-s} \sinh(r)^{2\sigma}dr.
		\end{multline}
		Once again we divide the integral in $ds$ over intervals $[0,1/\alpha]$ and $[1/\alpha,R]$.
		In sight of the upper bound, in the cases $q=2$, all $n\geq 2$, 
		and $q=4,n=2$ we can actually neglect the contribution of $[0,1/\alpha]$:
		applying \eqref{eq:Fremainder} for $s>1/\alpha$ we obtain
		\begin{align*}
		(q=2)&\quad C_{R,\lambda}^{n,2}\gtrsim_R \frac1{\alpha^{2\sigma}} 
		\int_{1/\alpha}^{R} g_{n,R}(s) \re\bra{\cosh(s)^{i\alpha}}^2  ds\simeq_R \frac1{\alpha^{2\sigma}},\\
		(q=4, n=2)&\quad C_{R,\lambda}^{2,4}\gtrsim_R \frac1{\alpha^{2}} \int_{1/\alpha}^{R} \frac{g_{n,R}(s)}{\sinh(s)}
		\re\bra{\cosh(s)^{i\alpha}}^4 ds \simeq_R \frac{\log\alpha}{\alpha^2}.
		\end{align*}
		Derivation of asymptotics on the right-hand side is as follows:
		since $g_{n,R}(s)$ vanishes at $s=R$,
		once again the relevant contribution comes from the lower integration extremum,
		close to which $g_{n,R}$ is bounded from below (in terms of $n,R$),
		while $\cosh(s)^{i\alpha}\sim 1$ for small $s$.
		We omit the tedious, but elementary computation.

		When $q\geq 6$, the contribution relative to $s\in [0,1/\alpha]$ is the relevant one, 
		matching the asymptotic obtained in the upper bound. 
		Indeed, for all $q\geq 2$ and $\epsilon>0$, $|F_{n,\lambda}(s)|^q\geq 1-\epsilon$ on $s\in [0,1/\alpha]$
		if $\alpha$ is large enough, since $F_{n,\lambda}(0)=1$ and $F_{n,\lambda}$ is continuous, so we can bound	
		\begin{align*}
		C_{R,\lambda}^{n,q}
		\gtrsim \int_0^R g_R(s) \sinh(s)^{2\sigma} F_{n,\lambda}(s)^q ds 
		\gtrsim_R (1-\epsilon) \int_0^{1/\alpha} \sinh(s)^{2\sigma} ds\simeq \frac1{\alpha^{2\sigma+1}}.
		\end{align*}
		Therefore, there is actually no need to estimate a lower bound on the contribution of $s\in [1/\alpha,R]$.
	\end{proof}
	
	\begin{lemma}\label{lem:taillambda}
		Let $n\geq 2$, $R>0$, $q\in 2\N$ and
		\begin{equation*}
		g_q(x)=\sum_{\ell=q/2}^\infty \binom{2\ell}{\ell}\frac{x^{2\ell}}{4^\ell},
		\quad x\in (-1,1).
		\end{equation*}
		As $\lambda\to\infty$ it holds
		\begin{equation}\label{eq:analyticlambda}
		\int_{B_R^2} g_q(F_{n,\lambda}(d(x,y))) dm_n^{\otimes 2}(x,y)
		\lesssim_{n,R} C^{n,q}_{R,\lambda}.
		\end{equation}
	\end{lemma}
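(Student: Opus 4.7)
The plan is to reduce the estimate to a bound on $\int_{B_R^2} F^q(1-F^2)^{-1/2}\, dm_n^{\otimes 2}$ by exploiting the pointwise inequality
\[
g_q(x) \leq \frac{x^q}{\sqrt{1-x^2}}, \qquad |x|<1,\ q\in 2\N,
\]
which follows from the observation that the sequence $c_\ell := \binom{2\ell}{\ell}4^{-\ell}$ is non-increasing in $\ell$ (so $c_{m+q/2}\leq c_m$ for every $m\geq 0$), combined with the identity $\sum_{m\geq 0}c_m x^{2m} = (1-x^2)^{-1/2}$ already recorded in \eqref{eq:powerseries}. Writing $g_q(x) = x^q \sum_{m\geq 0} c_{m+q/2}x^{2m}$ and majorising termwise yields the claim.

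Next I would split the domain of integration according to whether $|F_{n,\lambda}(d(x,y))| \leq 1/2$ or $> 1/2$. On the first region $(1-F^2)^{-1/2}\leq 2/\sqrt{3}$ is bounded, so its contribution is trivially $\lesssim C^{n,q}_{R,\lambda}$. On the second region one has $F^q\leq 1$ and, by the high-frequency estimates of \autoref{lem:Fasymp}, the set $\{s\geq 0 : |F_{n,\lambda}(s)|>1/2\}$ is contained in an interval $[0,r^\ast(\alpha)]$ with $r^\ast(\alpha) = O(1/\alpha)$ as $\alpha\to\infty$; furthermore the Taylor expansion $F_{n,\lambda}(s) = 1 - \lambda s^2/(2n) + O(\lambda^2 s^4)$ (the coefficient $F_{n,\lambda}''(0) = -\lambda/n$ being read off the indicial analysis of the ODE \eqref{eq:FODE}) combined with the Bessel approximation \eqref{eq:Fasympclose} yields the anti-concentration bound $1 - F_{n,\lambda}(s)^2 \gtrsim_n (\alpha s)^2 \wedge 1$ throughout the near-diagonal regime $s\leq r^\ast(\alpha)$. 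The polar-coordinate reduction of \autoref{ssec:changeofvariables} then produces
\[
\int_{\{|F|>1/2\}}\frac{dm_n^{\otimes 2}}{\sqrt{1-F^2}}\lesssim_{n,R} \int_0^{r^\ast(\alpha)}\frac{\sinh(s)^{n-1}}{\alpha s \wedge 1}\,ds \lesssim_n \alpha^{-n}.
\]

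To conclude it remains to observe that $\alpha^{-n}\lesssim_{n,R} C^{n,q}_{R,\lambda}$ for every even $q\geq 2$: indeed the table of \autoref{lem:momentlambda} gives $C^{n,2}_{R,\lambda}\simeq \alpha^{-(n-1)}$ and $C^{n,q}_{R,\lambda}\simeq \alpha^{-n}$ for even $q\geq 4$ (with a logarithmic correction further enhancing the estimate in the borderline case $n=2,\ q=4$), each of which dominates $\alpha^{-n}$. The main technical obstacle will be the quantitative anti-concentration estimate $1 - F_{n,\lambda}(s)^2 \gtrsim (\alpha s)^2\wedge 1$ on $\{|F|>1/2\}$: since both the leading Taylor coefficient and the threshold $r^\ast(\alpha)$ scale with $\alpha$, the bound must be uniform in the high-frequency limit, and this is where the Bessel-function approximation of \autoref{lem:Fasymp}(3) is essential, serving to control $F_{n,\lambda}$ on the intermediate scale $s\sim 1/\alpha$ where the quadratic Taylor term and its remainder are of the same order.
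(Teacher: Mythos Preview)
Your argument is correct and follows essentially the same route as the paper's proof. Both proofs reduce to the same near-diagonal estimate $\int_0^{O(1/\alpha)} \sinh(s)^{2\sigma}(\alpha s)^{-1}\,ds \simeq \alpha^{-2\sigma-1}$ via the Bessel approximation \eqref{eq:Fasympclose}, and both handle the off-diagonal part by the uniform bound $|F_{n,\lambda}|\le C_{n,R}<1$ away from $s=0$. The only differences are cosmetic: you split according to the value of $|F_{n,\lambda}|$ rather than at the fixed scale $s=1/\alpha$, and you use the single pointwise inequality $g_q(x)\le x^q(1-x^2)^{-1/2}$ (from monotonicity of $\binom{2\ell}{\ell}4^{-\ell}$) in place of the paper's two separate bounds $g_q\le (1-x^2)^{-1/2}$ near the diagonal and $g_q\lesssim x^q$ away from it. Your inequality is slightly cleaner, but the resulting estimates coincide.
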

	
	\begin{remark}\label{rmk:Fnever1}
		It is worth observing that $F_{n,\lambda}(r)=0$ if and only if $r=0$, for any $n,\lambda$;
		we report here a direct argument to prove it.
		Take points $x\in \H^n$ and $y=(1,0,\dots,0)\in\H^n$ with $x_0=r$, so that $F_{n\lambda}(d(x,y))=F_{n\lambda}(r)$,
		and assume the latter to equal 1. Then, by \eqref{eq:Fdef1}, we have
		\begin{equation*}
		1=F_{n\lambda}(r)=\frac{1}{\omega_{n-1}}\int_{S^{n-1}}[x,(1,\vartheta)]^{-\sigma+i\alpha}d\varsigma_{n-1}(\vartheta),
		\end{equation*} 
		and since $[x,y]\geq 1$ for any $x,y\in \H^n$, thus $|[x,(1,\vartheta)]^{-\sigma+i\alpha}|\leq 1$,
		we deduce that actually it must hold $|[x,(1,\vartheta)]^{-\sigma+i\alpha}|=1$, thus $|[x,(1,\vartheta)]|= 1$,
		for almost all $\vartheta\in S^{n-1}$, which implies that $r=x_0=0$.
	\end{remark}
	
	\begin{proof}
		The power series defining $g_q$ has radius of convergence 1, 
		hence the integrand in \eqref{eq:analyticlambda} is well-defined for all distinct
		$x,y$, that is $m_n^{\otimes 2}$-almost everywhere, thanks to \autoref{rmk:Fnever1}.
		Denote by $S=S(n,R,\lambda,q)$ the left-hand side of \eqref{eq:analyticlambda}.
		As in \eqref{eq:momentcomp1} we split the integration domain,
		\begin{align*}
		S &\lesssim_{n,R} \int_0^{2R} g_q(F_{n,\lambda}(s))\sinh(s)^{2\sigma} ds  \\ \nonumber
		&\lesssim_{n,R} \int_0^{1/\alpha} \frac{\sinh(s)^{2\sigma} ds}{\sqrt{1-F_{n,\lambda}(s)^2}}
		+\int_{1/\alpha}^{2R} F_{n,\lambda}(s)^q \sinh(s)^{2\sigma} ds:=(A)+(B),
		\end{align*}
		in which:
		\begin{itemize}
			\item we applied the inequality $|g_q(x)|\leq 1/\sqrt{1-x^2}$ on the interval $[0,1/\alpha]$
			($g_q$ is the tail of the series \eqref{eq:powerseries} having only positive terms);
			\item we replaced $g_q(F_{n,\lambda})$ with $F_{n,\lambda}^q$ in the integral over $[1/\alpha,R]$
			thanks to the fact that $|F_{n,\lambda}(s)|\leq C_{n,R}<1$ uniformly on $s\in [1/\alpha,R]$,
			as it directly descends from either \eqref{eq:Fremainder} or \eqref{eq:Fasymp},
			and observing that $x^{-q}g_q(x)$ is uniformly bounded on compacts of $[0,1)$.
		\end{itemize}
		It holds $(B)\simeq_{n,R} C^{n,q}_{R,\lambda}$ (see the proof of \autoref{lem:momentlambda}),
		so we are left to control $(A)$.
		
		We apply the asymptotic \eqref{eq:Fasympclose} to control $F_{n,\lambda}$ close to $s=0$.
		From \eqref{eq:Fasympclose} and the definition of $\sinh(r)$ we derive
		\begin{multline*}
		F_{n,\lambda}(s)=\frac{(2\pi)^{n/2}}{\omega_{n-1}}(\alpha s)^{1-n/2}J_{n/2-1}(\alpha s)(1+O(1/\alpha))\\
		=:f(\alpha s)(1+O(1/\alpha)), 
		\quad s\in [0,1/\alpha],
		\end{multline*}
		where we stress that Landau's $O$ does not depend on parameters.
		From the definition of Bessel functions in \eqref{eq:besselDEF} we know that 
		$f(\alpha s)=1-\alpha^2 s^2+\dots$ is (a power series in $\alpha s$) analytic on the whole $\R$.
		This, once again recalling that $\sinh(r)\simeq r$, $r\to 0$, 
		implies that the integral $(A)$ converges and that
		\begin{equation*}
		(A)\lesssim_{n,R} \int_0^{1/\alpha} \frac{\sinh(s)^{2\sigma} ds}{\alpha s}
		\simeq_{n,R} \frac{1}{\alpha^{2\sigma+1}}=O(C^{n,q}_{R,\lambda}) 
		\end{equation*}
		for all $q$, the last step following from \autoref{lem:momentlambda}.
	\end{proof}
	
	Recalling the power series expansion \eqref{eq:powerseries}, \autoref{lem:taillambda} implies:
	
	\begin{corollary}\label{cor:sqrtlambda}
		Let $n\geq 2$, $R>0$;
		as $\lambda\to\infty$ it holds
		\begin{equation}\label{eq:sqrtlambda}
		\abs{\int_{B_R^2}
			\frac{dm_n^{\otimes 2}(x,y)}{\sqrt{1-F_{n,\lambda}(d(x,y))^2}}
			-m_n(B_R)^2- \frac12 \var h^{n,2}_{R,\lambda}}
		\lesssim_{n,R} \var h^{n,4}_{R,\lambda},
		\end{equation}
		in particular, the first summand of the left-hand side is uniformly bounded in $\lambda$.
	\end{corollary}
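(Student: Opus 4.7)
The proof should follow almost immediately from \autoref{lem:taillambda} combined with the power-series identity \eqref{eq:powerseries}. The plan is to isolate the first two Taylor coefficients of $x\mapsto (1-x^2)^{-1/2}$ and absorb the rest into the tail estimate already established.

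First, I would recall from \eqref{eq:powerseries} that for every $x\in(-1,1)$ one has the convergent expansion
\begin{equation*}
	\frac{1}{\sqrt{1-x^2}}
	=\sum_{\ell=0}^\infty \binom{2\ell}{\ell}\frac{x^{2\ell}}{4^\ell}
	=1+\frac{x^2}{2}+g_4(x),
\end{equation*}
where $g_4$ is the tail introduced in \autoref{lem:taillambda}. By \autoref{rmk:Fnever1} we have $F_{n,\lambda}(d(x,y))\in(-1,1)$ for all $x\neq y$, so this decomposition is valid pointwise $m_n^{\otimes 2}$-almost everywhere on $B_R\times B_R$ when we substitute $x=F_{n,\lambda}(d(x,y))$.

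Next, I would integrate the three summands separately over $B_R^2$. The constant term yields $m_n(B_R)^2$; the quadratic term yields $\tfrac12\int_{B_R^2}F_{n,\lambda}(d(x,y))^2\,dm_n^{\otimes 2}(x,y)$, which is $\tfrac12\var h^{n,2}_{R,\lambda}$ by the identification \eqref{eq:momentscovariance}. Monotone convergence justifies the term-by-term integration since every summand in the power series is non-negative. Rearranging, the quantity whose absolute value appears on the left-hand side of \eqref{eq:sqrtlambda} equals exactly $\int_{B_R^2}g_4(F_{n,\lambda}(d(x,y)))\,dm_n^{\otimes 2}(x,y)$, and \autoref{lem:taillambda} (applied with $q=4$) bounds this by $C^{n,q}_{R,\lambda}\lesssim_{n,R}\var h^{n,4}_{R,\lambda}$, yielding the desired estimate.

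For the final statement, the uniform-in-$\lambda$ boundedness of $\int_{B_R^2}dm_n^{\otimes 2}/\sqrt{1-F_{n,\lambda}^2}$ follows immediately by combining the inequality just proved with \autoref{lem:momentlambda}, which shows that both $\var h^{n,2}_{R,\lambda}$ and $\var h^{n,4}_{R,\lambda}$ tend to $0$ as $\lambda\to\infty$ (for $R$ fixed), so the left-hand side of \eqref{eq:sqrtlambda} stays bounded by $m_n(B_R)^2+o(1)$. There is no real obstacle in this argument: the genuine technical work has already been carried out in the proof of \autoref{lem:taillambda}, which handled the subtle issue that $F_{n,\lambda}$ can approach $1$ near the diagonal and required the Bessel approximation \eqref{eq:Fasympclose} together with the analyticity of $f(\alpha s)$ to integrate $1/\sqrt{1-F_{n,\lambda}^2}$ on the scale $s\lesssim 1/\alpha$.
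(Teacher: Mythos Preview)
Your proof is correct and follows exactly the route indicated by the paper: the corollary is an immediate consequence of the power-series identity \eqref{eq:powerseries} (isolating the $\ell=0$ and $\ell=1$ terms) together with \autoref{lem:taillambda} applied with $q=4$. There is nothing to add.
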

	
	Moving to contractions in \autoref{eq:contractionsK}, we have the following:
	
	\begin{lemma}\label{lem:contractionslambda}
		Let $n\geq 2$, $R>0$, $p,p'\geq 1$ and even $2\leq q\leq p+p'$.
		As $\lambda\to\infty$ it holds
		\begin{multline}\label{eq:contractionbiglambda}
		\int_{B_R^4}
		F_{n,\lambda}(d(x,y))^p F_{n,\lambda}(d(y,z))^{p'}\\ 
		\cdot F_{n,\lambda}(d(z,w))^p F_{n,\lambda}(d(w,x))^{p'}
		dm_n^{\otimes 4}(x,y,z,w)
		=o_R\pa{\var (h^{n,q}_{R,\lambda})^2}.
		\end{multline}
	\end{lemma}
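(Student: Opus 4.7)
The main tool is Cauchy--Schwarz applied to the cyclic structure of the integrand. Grouping it into the two ``antipodal'' factors $[F_{n,\lambda}(d(x,y))^p F_{n,\lambda}(d(z,w))^p]$ and $[F_{n,\lambda}(d(y,z))^{p'} F_{n,\lambda}(d(w,x))^{p'}]$, Cauchy--Schwarz in $dm_n^{\otimes 4}$ gives
\begin{equation*}
|I(p,p')| \leq \left(\int_{B_R^4} F_{n,\lambda}(d(x,y))^{2p} F_{n,\lambda}(d(z,w))^{2p} dm_n^{\otimes 4}\right)^{1/2} \cdot (\text{analogous factor with } p').
\end{equation*}
Since $F_{n,\lambda}(d(x,y))$ does not involve $(z,w)$, the integrand separates and each 4-fold integral factors as a product of 2-fold integrals, yielding $|I(p,p')| \leq C^{n,2p}_{R,\lambda}\cdot C^{n,2p'}_{R,\lambda}$. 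Inserting the asymptotics of \autoref{lem:momentlambda} and permuting $p$ and $p'$ to optimize, this bound already covers all subcases in which the constraint $q \leq p+p'$ is loose enough to leave a polynomial margin, e.g.\ when $q=2$ and $\max(p,p') \geq 2$: then $C^{n,2p}C^{n,2p'} \lesssim_{n,R} |\alpha|^{-2\sigma}\cdot|\alpha|^{-2\sigma-1}=o(|\alpha|^{-4\sigma})=o((C^{n,2}_{R,\lambda})^2)$.

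The main obstacle is the ``critical'' subcases where Cauchy--Schwarz only matches the target up to a constant, namely $p=p'=1,\, q=2$ and, more generally, the cases $p+p'=q$ (such as $(p,p')=(2,2)$ and $(1,3)$ for $q=4$, where the Cauchy--Schwarz bound gives $(C^{n,4}_{R,\lambda})^2$ or $C^{n,2}_{R,\lambda}C^{n,6}_{R,\lambda}$, both of the same order $|\alpha|^{-4\sigma-1}$ or $|\alpha|^{-4\sigma-2}$ as $(\var h^{n,q}_{R,\lambda})^2$). For these I would refine the analysis by applying the radial reduction of Subsection \ref{ssec:changeofvariables} directly to $I(p,p')$. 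After the changes of variables, the 4-fold integral reduces to a triple radial integral whose integrand is a product of factors of $F_{n,\lambda}$ weighted by $\sinh(\cdot)^{n-1}$. Splitting each radial variable at the threshold $1/|\alpha|$, as in the proof of \autoref{lem:momentlambda}, any region containing at least one ``small'' radial distance contributes at most a volume factor of order $|\alpha|^{-n}$ times a main term, producing the required extra decay over the Cauchy--Schwarz benchmark.

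The hardest region is the one where all radial distances exceed $1/|\alpha|$. There the expansion \eqref{eq:Fremainder}, i.e.\ $\sinh(r)^{\sigma} F_{n,\lambda}(r) = \re[c_n(\alpha)\sinh(r)^{i\alpha}] + O(|\alpha|^{-2}\sinh(r)^{-2})$, represents the integrand as a sum of oscillatory terms with phases of the form $\alpha(\epsilon_1 r_1 + \epsilon_2 r_2 + \epsilon_3 r_3)$, $\epsilon_j\in\{\pm 1\}$. Most sign combinations are non-stationary in at least one $r_j$, so integration by parts in that variable (or a van der Corput argument analogous to the one yielding \eqref{eq:errorcontrol}) extracts an extra factor of $|\alpha|^{-1}$. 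The remaining ``stationary'' combinations correspond to degenerate geometric configurations of $(x,y,z,w)$ that concentrate on a lower-dimensional subset of $B_R^4$, whose contribution is controlled directly by the radial volume estimate and yields the same $o(1)$ improvement. Putting all regions together, $I(p,p') = o((\var h^{n,q}_{R,\lambda})^2)$ for every even $2\leq q\leq p+p'$, as required.
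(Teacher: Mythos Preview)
Your Cauchy--Schwarz bound $|I(p,p')|\le C^{n,2p}_{R,\lambda}\,C^{n,2p'}_{R,\lambda}$ is correct, but it leaves far more critical cases than you indicate: for every $p,p'\ge 2$ one has $C^{n,2p}C^{n,2p'}\simeq\alpha^{-4\sigma-2}$, which \emph{exactly matches} $\var(h^{n,q}_{R,\lambda})^2$ for all even $q\ge 4$ (and is even larger when $p=1,p'\ge 3$). So the non-oscillatory part of your argument is conclusive essentially only when $q=2$ and $\max(p,p')\ge 2$; everything else falls into your ``refined'' branch.

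The paper avoids this by a much simpler device that you are missing. Instead of Cauchy--Schwarz (which preserves the 4-cycle), apply the GM--QM inequality to two \emph{adjacent} factors,
\[
|F(d(z,w))|^{p}\,|F(d(w,x))|^{p'}\ \le\ \tfrac12\bigl(|F(d(z,w))|^{2p}+|F(d(w,x))|^{2p'}\bigr),
\]
which \emph{breaks the cycle} into a 3-chain $x\!-\!y\!-\!z\!-\!w$. The resulting integral factorises by iterated polar integration into $m_n(B_R)\cdot J_p\,J_{p'}\,J_{2p}$ with $J_k=\int_0^{2R}|F_{n,\lambda}|^k\sinh^{2\sigma}$. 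A product of \emph{three} one-dimensional moments beats $\var^2$ after a short case check, with no oscillatory analysis whatsoever. For instance, for $q=4$, $p=p'=2$ the paper obtains $J_2^2 J_4\simeq\alpha^{-6\sigma-1}$ (resp.\ $\alpha^{-4}\log\alpha$ when $n=2$), which is $o(\var^2)$, whereas your $(C^{n,4})^2$ only matches $\var^2$.

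Your fallback plan is only a sketch, and one of its steps is not right as stated: the cyclic integral does \emph{not} reduce to a ``triple radial integral'', because once you parametrise $y,z,w$ by their successive distances $r_1,r_2,r_3$ the fourth distance $d(w,x)$ is a nontrivial function of all previous radii \emph{and} angles, so the angular variables cannot be integrated out trivially. The stationary-phase step (classifying sign patterns $\alpha\sum\epsilon_j r_j$, controlling the stationary combinations by a ``lower-dimensional'' argument) is asserted rather than carried out. This route may be salvageable, but it is substantially harder than the paper's; the key idea you should use instead is the cycle-breaking GM--QM step.
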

	
	\begin{proof}
		Denote by $I(\lambda)=I_{n,R,p,p'}(\lambda)$ the left-hand side of \eqref{eq:contractionbiglambda},
		assume without loss of generality $p\leq p'$
		and drop dependences of constants on $n,R,p,p'$ to lighten notation.
		Applying GM-QM inequality to the last two factors of the integrand we obtain
		\begin{equation*}
		I(\lambda) \lesssim \int_{B_R^4}
		|F_{n,\lambda}(d(x,y))|^p |F_{n,\lambda}(d(y,z))|^{p'}
		|F_{n,\lambda}(d(z,w))|^{2p} dm_n^{\otimes 4}(x,y,z,w)+\dots,
		\end{equation*}
		where dots stand for an analogous term with higher exponents (and thus lower asymptotic order). 
		Integration variables can now be decoupled: moving to polar coordinates
		we can write
		\begin{equation*}
		I(\lambda)\lesssim J_p(\lambda)J_{p'}(\lambda)J_{2p}(\lambda),\quad
		J_p(\lambda)=\int_0^R |F_{n,\lambda}(r)|^p\sinh(r)^{2\sigma}dr,
		\end{equation*}
		where we can apply the asymptotic estimates obtained in the proof of the previous Lemma,
		\begin{equation*}
		J_p(\lambda)
		\begin{cases}
		\lesssim \lambda^{-\sigma-1} &p=1,\\
		\simeq \lambda^{-\sigma} &p=2,\\
		\lesssim \lambda^{-\sigma-1/2} &p\geq 3\text{ excluding }n=2,p=4\text{ and } n=p=3,\\
		\lesssim \log(\lambda)\lambda^{-\sigma-1/2} &n=2,p=4\text{ or } n=p=3.
		\end{cases}
		\end{equation*}
		The thesis now follows from a case-by-case check, which we summarize in the following table
		(the right-most column coming once again from \autoref{lem:momentlambda}).
		\begin{center}
			\begin{tabular}{c|c|c|c}
				$q,n$			&	possible $p,p'$	&	$J_pJ_{p'}J_{2p}\lesssim$	&	$\var(h^{n,q}_{R,\lambda})^2\simeq$\\
				\hline
				$q=2$, all $n$	&	1,1		&	$\lambda^{-3\sigma-2}$	&	$\lambda^{-2\sigma}$	\\
				\hline
				$q=4,n=2$		&	1,3		&	$\lambda^{-3}$	&	$\lambda^{-2}\log(\lambda)^2$	\\
				&	2,2		&	$\lambda^{-2}$	&		\\
				\hline
				$q=4,n=3$		&	1,3		&	$\lambda^{-9/2}\log(\lambda)^2$	&	$\lambda^{-3}$	\\
				&	2,2		&	$\lambda^{-7/2}$	&		\\
				\hline
				$q=4,n\geq 4$,		 &	1,3		&	$\lambda^{-3\sigma-3/2}$	&	$\lambda^{-2\sigma-1}$	\\
				thus $\sigma\geq 3/2$&	2,2		&	$\lambda^{-3\sigma-1/2}$	&		\\
				\hline
				$q\geq 6$, all $n$&	$1,\geq 5$	&	$\lambda^{-3\sigma-3/2}$	&	$\lambda^{-2\sigma-1}$	\\
				&	$2,\geq 4$			&	$\lambda^{-3\sigma-1}\log(\lambda)^2$	&		\\
				&	both $\geq 3$		&	$\lambda^{-3\sigma-3/2}\log(\lambda)^4$	&		\\
			\end{tabular}
		\end{center}
		The thesis follows since asymptotics in the third column vanish faster then the ones in the fourth one;
		notice that in the case $q\geq 6$ we included in the third column possible logarithmic corrections
		occurring in low dimension since $\var(h^{n,q}_{R,\lambda})^2$ in this case vanishes much slower
		for all $n$.
		On the other hand, one should be aware that in the case $q=4,n=2$ a careful account of the logarithmic
		correction in the asymptotic of the variance is crucial.
	\end{proof}
	
	\subsection{Asymptotics for large $R$, fixed \texorpdfstring{$\lambda$}{lambda}}
	The main difference between the analysis in high-frequency regimes outlined above, and the one on large domains we carry through in this paragraph, is that in the latter case the oscillations of $F_{n,\lambda}$ (more pronounced close to $r=0$) actually play no role, and the determinant factor is the exponential decay at $r\to\infty$.
	
	\begin{lemma}\label{lem:momentr}
		As $R\to\infty$, for a fixed $\lambda> \sigma^2$ (\emph{i.e.} for fixed $\alpha\in \R^\ast$), 
		we have the following asymptotics:
		\begin{itemize}
			\item ($q=1$, any $n\geq 2$) $C_{R,\lambda}^{n,1}\lesssim_{n,\lambda} e^{2\sigma R}$;
			\item ($q=2$, any $n\geq 2$) $C_{R,\lambda}^{n,2}\simeq_{n,\lambda} R e^{2\sigma R}$;
			\item (even $q\geq 2$, any $n\geq 2$) $C_{R,\lambda}^{n,q}\simeq_{n,\lambda} e^{2\sigma R}$;
			\item (odd $q\geq 2$, any $n\geq 2$) $C_{R,\lambda}^{n,q}\lesssim_{n,\lambda} e^{2\sigma R}$.
		\end{itemize}
	\end{lemma}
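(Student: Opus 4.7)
The plan is to reduce $C_{R,\lambda}^{n,q}$ to a one-dimensional integral via the change of variables of Subsection~\ref{ssec:changeofvariables}: switching to hyperbolic polar coordinates around a generic point $x \in B_R$ yields
\begin{equation*}
C_{R,\lambda}^{n,q} = C_n \int_0^R \sinh(r)^{n-1} dr \int_{S^{n-1}} d\varsigma_{n-1}(\vartheta') \int_0^{R(r,\vartheta')} \sinh(s)^{n-1} F_{n,\lambda}(s)^q ds,
\end{equation*}
with $R-r \leq R(r,\vartheta') \leq R+r$. Recalling $n-1 = 2\sigma$ and invoking the uniform bound \eqref{eq:Funif}, the integrand of the innermost integral is dominated by $e^{(2-q)\sigma s}$, while $\int_0^R \sinh(r)^{n-1} dr \simeq_n e^{2\sigma R}$.

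For the upper bounds with $q \geq 3$, the exponent $(2-q)\sigma$ is strictly negative, so the inner integral is bounded by the convergent improper integral $\int_0^\infty \sinh(s)^{n-1} |F_{n,\lambda}(s)|^q ds < \infty$, yielding $C_{R,\lambda}^{n,q} \lesssim_{n,\lambda} e^{2\sigma R}$. When $q=2$ the exponent vanishes and the inner integral grows at most linearly in $R(r,\vartheta') \leq R+r$, which delivers $C_{R,\lambda}^{n,2} \lesssim_{n,\lambda} R e^{2\sigma R}$.

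For matching lower bounds in the even-$q$ cases, nonnegativity of $F_{n,\lambda}^q$ lets one replace $R(r,\vartheta')$ by $R-r$. For $q \geq 4$ even, the improper integral $A_q(\lambda) := \int_0^\infty \sinh(s)^{n-1} F_{n,\lambda}(s)^q ds$ converges and is strictly positive (since $F_{n,\lambda}$ is continuous with $F_{n,\lambda}(0)=1$, the integrand behaves like $\sinh(s)^{n-1}>0$ on a right-neighborhood of $0$); picking $R_0(\lambda)$ such that $\int_0^{R_0} \sinh(s)^{n-1} F_{n,\lambda}(s)^q ds \geq A_q(\lambda)/2$ and restricting the outer integration to $r \in [0, R-R_0]$ yields $C_{R,\lambda}^{n,q} \gtrsim_{n,\lambda} e^{2\sigma R}$. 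The $q=2$ lower bound requires the sharper expansion \eqref{eq:Fremainder}: squaring $\sinh(s)^\sigma F_{n,\lambda}(s) = \re[c_n(\alpha)\sinh(s)^{i\alpha}] + O(\sinh(s)^{-2})$ gives $\sinh(s)^{n-1}F_{n,\lambda}(s)^2 = |c_n(\alpha)|^2/2 + (\text{bounded oscillating term with zero mean}) + O_\lambda(\sinh(s)^{-2})$, so that $\int_0^L \sinh(s)^{n-1} F_{n,\lambda}(s)^2 ds = |c_n(\alpha)|^2 L/2 + O_\lambda(1)$ and consequently $C_{R,\lambda}^{n,2} \gtrsim_{n,\lambda} R e^{2\sigma R}$.

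The hard part will be $q=1$: the naive bound using only decay would give $\int_0^{R+r}\sinh(s)^{n-1}|F_{n,\lambda}(s)| ds \simeq e^{\sigma(R+r)}$, and then $\int_0^R \sinh(r)^{n-1} e^{\sigma(R+r)} dr \simeq e^{4\sigma R}$, far too large. One must instead exploit the oscillations of $F_{n,\lambda}$; the cleanest route is the Fourier-analytic identity $C_{R,\lambda}^{n,1} = C_n |f_R(\alpha)|^2$ with $f_R(\alpha) = c_n \int_0^R F_{n,\lambda}(r) \sinh(r)^{n-1} dr$ derived in the proof of \autoref{lem:momentlambda} (which is valid for any $\lambda$, not just in the high-frequency regime). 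Substituting \eqref{eq:Fremainder} and using that $\sinh(r)^{n-1-\sigma} = \sinh(r)^\sigma \simeq 2^{-\sigma} e^{\sigma r}$ for large $r$, the leading contribution to $f_R(\alpha)$ is proportional to $\int_0^R e^{(\sigma+i\alpha)r} dr = O_\lambda(e^{\sigma R})$, while the remainder contributes a smaller quantity; hence $|f_R(\alpha)|^2 \lesssim_\lambda e^{2\sigma R}$, which completes the proof.
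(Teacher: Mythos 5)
Most of your argument is correct and follows the paper's own route: the $q=1$ case via the Fourier identity $C^{n,1}_{R,\lambda}=C_n|f_R(\alpha)|^2$ combined with \eqref{eq:Fremainder}, the upper bounds via the polar change of variables and the uniform decay $|F_{n,\lambda}(s)|\lesssim_n e^{-\sigma s}$, and the lower bound for even $q\geq 4$ via positivity and convergence of $\int_0^\infty\sinh(s)^{2\sigma}F_{n,\lambda}(s)^q\,ds$ (your packaging of this last step is in fact a bit cleaner than the paper's, which goes through the asymptotic expansion of $F_{n,\lambda}$ instead of just continuity at $0$).

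The gap is in the lower bound for $q=2$. Your one-dimensional computation $\int_0^L\sinh(s)^{2\sigma}F_{n,\lambda}(s)^2\,ds=\tfrac12|c_n(\alpha)|^2L+O_\lambda(1)$ is fine, but the conclusion $C^{n,2}_{R,\lambda}\gtrsim_{n,\lambda} R\,e^{2\sigma R}$ does not follow if, as announced at the start of that paragraph, you insert it with $L=R-r$ after replacing $R(r,\vartheta')$ by its minimum. Indeed
\begin{equation*}
\int_0^R\sinh(r)^{2\sigma}\,(R-r)\,dr\;\simeq_n\;\int_0^R e^{2\sigma r}(R-r)\,dr\;=\;\frac{e^{2\sigma R}-1-2\sigma R}{4\sigma^2}\;\simeq_n\;e^{2\sigma R},
\end{equation*}
with no factor of $R$: the hyperbolic volume concentrates near $r=R$, where $R-r=O(1)$, so the crude replacement $R(r,\vartheta')\mapsto R-r$ destroys exactly the extra growth you are trying to capture. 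This is the one point where the paper has to work harder: it exhibits a fixed solid angle $\Sigma\subset S^{n-1}$ (of directions pointing back toward the centre of the ball) such that $R(r,\vartheta')\geq R$ for all $r\in[0,R]$ and $\vartheta'\in\Sigma$, so that the inner integral can be taken over the full interval $[0,R]$ uniformly in $r$, yielding $\varsigma_{n-1}(\Sigma)\cdot\tfrac12|c_n(\alpha)|^2\,R\cdot\int_0^R\sinh(r)^{2\sigma}dr\simeq_{n,\lambda}R\,e^{2\sigma R}$. You need this, or some equivalent lower bound on the angular average of $R(r,\vartheta')$ that is $\gtrsim R$ uniformly in $r$, to close the $q=2$ case.
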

	
	\begin{proof}
		Starting from the case $q=1$, we established in the previous Section that 
		\begin{gather*}
		C_{R,\lambda}^{n,1}=C_n \pa{\int_0^R F_{n,\lambda}(r) (\sinh r)^{2\sigma} dr}^2,
		\end{gather*}
		so we simply discuss the limiting behavior of the right-hand side as $R\to\infty$
		instead of $\lambda\to\infty$. In the large $R$ case, we are not dealing with an oscillatory integral,
		and the estimate is actually easier: by \eqref{eq:Fremainder} it holds
		\begin{multline*}
		\int_0^R F_{n,\lambda}(r) \sinh (r)^{2\sigma} dr\\
		=C_{n,\lambda} \int_0^R \re[c_n(\alpha) \sinh(r)^{i\alpha}]\sinh (r)^{\sigma}dr
		+O_{n,\lambda} \pa{\int_1^R \sinh(r)^{\sigma-2} dr}\\
		\lesssim_{n,\lambda} \int_0^R e^{\sigma r} \cos(\alpha r+\phi_n) dr  
		\end{multline*}
		for some phase $\phi_n\in [0,2\pi]$, from which the statement for $q=1$ directly follows.
		
		Moving to $q\geq 2$, let us establish upper bounds by neglecting oscillations as above.
		From the discussion in Subsection \ref{ssec:changeofvariables} we deduce
		\begin{equation*}
		C_{R,\lambda}^{n,q}\lesssim_n  \int_0^R dr \sinh(r)^{2\sigma} \int_0^{R+r} ds \sinh(s)^{2\sigma} |F_{n,\lambda}(s)|^q ,
		\end{equation*}
		At this point we can just apply a rough consequence of \eqref{eq:Fremainder},
		$|F_{n,\lambda}(s)|\lesssim_{n,\lambda} e^{-\sigma s}$, to deduce
		\begin{equation*}
		C_{R,\lambda}^{n,q} \lesssim_{n,\lambda} \int_0^R dr e^{2\sigma r} \int_0^{R+r} ds e^{\sigma(2-q)s},
		\end{equation*}
		from which upper bounds coherent with the statement directly follow.
		Just as in the high-frequency case, we only claimed exact asymptotics only for even $q\geq 2$,
		since lower bounds for odd $q$ are once again made harder to derive by oscillations of $F_{n,\lambda}$.

		As for lower bounds for \emph{even} $q\geq 2$, starting from the change of variables of Subsection \ref{ssec:changeofvariables}, this time we replace $R(r,\theta)$ with its minimum $R-r$, 
		\begin{gather*}
		C_{R,\lambda}^{n,q}
		\gtrsim \int_0^R \int_0^{R-r} F_{n,\lambda}(s)^q \sinh(s)^{2\sigma} \sinh(r)^{2\sigma} ds dr
		=\int_0^R g_R(s) \sinh(s)^{2\sigma} F_{n,\lambda}(s)^q ds,\\
		g_{n,R}(s)=\int_0^{R-s} \sinh(r)^{2\sigma}dr\simeq_n m_n(B_{R-s})\simeq_n e^{2\sigma (R-s)}.
		\end{gather*}
		The case $q\geq 4$ is the easier one: combining \eqref{eq:Fremainder} and the last displayed formulae,
		\begin{equation*}
		C_{R,\lambda}^{n,q}\gtrsim_{n,\lambda} 
		e^{2\sigma R}\int_0^R \re[c_n(\alpha) \sinh(s)^{i\alpha}]^q \sinh(s)^{\sigma q}ds
		\simeq_{n,\lambda}  e^{2\sigma R},
		\end{equation*}
		since the integrand consists in a positive power of a trigonometric function
		(as opposed to the case $q=1$ above) and a rapidly decreasing function,
		thus the integral is overall $O(1)$ as $R\to\infty$.
		
		The last estimate clearly holds true also for $q=2$, but in that case it does not capture the
		correct asymptotic behavior, due to the rough control from below in replacing $R(r,\vartheta)\mapsto R-r$.
		In this case we proceed more carefully: in the notation of Subsection \ref{ssec:changeofvariables},
		an easy geometric argument reveals that there exists a solid angle\footnote{
			Recall that, given a point $x$ whose distance from the origin is $r$, 	
			$R(r,\vartheta)$ is the geodesic distance of the boundary of $B_R$ from $x$ in direction $\vartheta$,
			so the solid angle $\Sigma$ is centered around the geodesic arc joining $x$ and the origin.
			As $R$ increases, one can actually take larger and larger $\Sigma$.}
		$\Sigma\subset S^{n-1}$ such that $R(r,\theta)\geq R$ for all $r\in[0,R]$ and $\vartheta\in\Sigma$,
		so that we can control
		\begin{align*}
		&\int_{B_R}dm_n(x)\int_{B_R}dm_n(y)F_{n,\lambda}(d(x,y))^2\\
		&\qquad\simeq_n \int_0^R \sinh(r)^{2\sigma}dr\int_{S^{n-1}}d\varsigma_{n-1}(\vartheta)
		\int_0^{R(r,\vartheta)} F_{n,\lambda}(s)^2\sinh(s)^{2\sigma}ds\\
		&\qquad\geq \varsigma_{n-1}(\Sigma) \int_0^R \sinh(r)^{2\sigma}dr
		\int_0^{R} F_{n,\lambda}(s)^2\sinh(s)^{2\sigma}ds\\
		&\qquad \simeq_{n,\lambda} m_n(B_R)
		\int_0^R \re[c_n(\alpha) \sinh(s)^{i\alpha}]^2ds\simeq_{n,\lambda} R\cdot m_n(B_R).\qedhere
		\end{align*}
	\end{proof}
	
	\begin{lemma}\label{lem:sqrtR}
		Let $n\geq 2$, $\lambda>\sigma^2$;
		as $R\to\infty$ it holds
		\begin{equation}\label{eq:sqrtR}
		\int_{B_R^2}
		\frac{dm_n^{\otimes 2}(x,y)}{\sqrt{1-F_{n,\lambda}(d(x,y))^2}}
		-m_n(B_R)^2- \frac12 \var h^{n,2}_{R,\lambda}
		\lesssim_{n,\lambda} m_n(B_R).
		\end{equation}
	\end{lemma}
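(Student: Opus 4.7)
The plan is to use the power series expansion \eqref{eq:powerseries} of $1/\sqrt{1-x^2}$ with $x=F_{n,\lambda}(d(x,y))$, and integrate term by term; since all the coefficients $\binom{2\ell}{\ell}4^{-\ell}$ are non-negative, monotone convergence applies. Isolating the $\ell=0$ and $\ell=1$ contributions,
\begin{equation*}
\int_{B_R^2}\frac{dm_n^{\otimes 2}(x,y)}{\sqrt{1-F_{n,\lambda}(d(x,y))^2}} = m_n(B_R)^2 + \frac{1}{2}\int_{B_R^2} F_{n,\lambda}(d(x,y))^2\,dm_n^{\otimes 2}(x,y) + \int_{B_R^2} g(d(x,y))\,dm_n^{\otimes 2}(x,y),
\end{equation*}
with $g(s):=\sum_{\ell\geq 2}\binom{2\ell}{\ell}4^{-\ell}F_{n,\lambda}(s)^{2\ell}\geq 0$. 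By \eqref{e:varh} the middle summand is a fixed multiple of $\var h^{n,2}_{R,\lambda}$, matching (up to an absolute constant) the subtraction in the statement, so the task reduces to proving $\int_{B_R^2} g(d(x,y))\,dm_n^{\otimes 2}(x,y)\lesssim_{n,\lambda} m_n(B_R)$. Via the polar-coordinate reduction of Subsection \ref{ssec:changeofvariables}, and since $g\geq 0$ admits the trivial upper bound $R(r,\vartheta)\leq 2R$, this in turn follows from
\begin{equation*}
\int_0^{\infty} g(s)\sinh(s)^{2\sigma}\,ds \lesssim_{n,\lambda} 1,
\end{equation*}
the left-hand side being independent of $R$.

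I would bound this one-dimensional integral on three regimes. Near $s=0$, the hypergeometric form \eqref{eq:Fhypergeom} (or equivalently the ODE \eqref{eq:FODE}) yields $F_{n,\lambda}(s)=1-\frac{\lambda}{2n}s^{2}+O_{\lambda}(s^{4})$, hence $1-F_{n,\lambda}(s)^{2}\gtrsim_{\lambda} s^2$; using the trivial pointwise bound $g\leq 1/\sqrt{1-F_{n,\lambda}^2}$ gives $g(s)\sinh(s)^{2\sigma}\lesssim_\lambda s^{2\sigma-1}$, integrable on $(0,1]$ precisely because $2\sigma=n-1\geq 1$ for $n\geq 2$. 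For large $s$, the uniform decay \eqref{eq:Funif} yields $|F_{n,\lambda}(s)|\lesssim_n e^{-\sigma s}$, so $|F_{n,\lambda}(s)|\leq 1/2$ for $s\geq s_0(n,\lambda)$; factoring $F_{n,\lambda}(s)^4$ out of the series defining $g$ one obtains $g(s)\lesssim_n F_{n,\lambda}(s)^4\lesssim_n e^{-4\sigma s}$, so $g(s)\sinh(s)^{2\sigma}\lesssim_n e^{-2\sigma s}$ is integrable at $+\infty$. On the remaining compact intermediate interval, $|F_{n,\lambda}(s)|<1$ by \autoref{rmk:Fnever1}, so $g$ is continuous and therefore bounded. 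Summing the three contributions yields the required uniform bound.

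The main subtlety is the near-diagonal analysis: $g(s)$ genuinely blows up like $1/s$ as $s\to 0$ (inherited from the $1/\sqrt{1-F_{n,\lambda}^2}$ singularity), and only the hyperbolic volume factor $\sinh(s)^{n-1}$ renders the integral convergent, using exactly the hypothesis $n\geq 2$. Since we only target an upper bound of order $m_n(B_R)$ (and not an exact asymptotic), the precise large-$R$ behavior $\var h^{n,2}_{R,\lambda}\simeq_{n,\lambda} Rm_n(B_R)$ of the subtracted middle term plays no role in the argument: the polar-coordinate factorization $\lesssim m_n(B_R)\cdot\int_0^\infty g(s)\sinh(s)^{2\sigma}\,ds$ directly yields the conclusion, while the possible mismatch between the coefficient $\frac{1}{2}$ in the statement and the coefficient $\frac{1}{2}\int F^2 = \frac{1}{4}\var h^{n,2}_{R,\lambda}$ produced by the expansion is absorbed into a bounded (in fact $O(Rm_n(B_R))$) correction that trivially satisfies the stated $\lesssim_{n,\lambda} m_n(B_R)$ bound once combined with the fact that $\var h^{n,2}_{R,\lambda}\simeq Rm_n(B_R)\gtrsim m_n(B_R)$ --- or, more simply, by redefining the coefficient on the subtracted variance term.
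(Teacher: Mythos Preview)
Your core argument is correct and coincides with the paper's own (sketched) proof: rewrite the left-hand side as the integral of the tail series $g(F_{n,\lambda})$, reduce to a one-dimensional integral via the polar-coordinate argument of Subsection~\ref{ssec:changeofvariables}, and then split into a near-diagonal piece (controlled by the Taylor expansion of $F_{n,\lambda}$ at $0$ and the volume factor $\sinh(s)^{2\sigma}$), a far piece (controlled by the exponential decay $|F_{n,\lambda}|\lesssim e^{-\sigma s}$ and the comparison $g(x)\lesssim x^{4}$ on $|x|\le 1/2$), and a compact intermediate piece (where $|F_{n,\lambda}|<1$ by \autoref{rmk:Fnever1}). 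This is exactly the paper's route.

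The only genuine problem is your final paragraph. You assert that a residual term of order $O(R\,m_n(B_R))$ ``trivially satisfies the stated $\lesssim_{n,\lambda} m_n(B_R)$ bound''; this is false as written, since $R\,m_n(B_R)$ is \emph{not} $\lesssim m_n(B_R)$. What is actually going on is a normalization inconsistency in the paper itself: equation~\eqref{e:varh} gives $\var h^{n,2}_{R,\lambda}=2\int_{B_R^2}F_{n,\lambda}^{2}$, whereas equation~\eqref{eq:momentscovariance} and the paper's own proof of Lemma~\ref{lem:sqrtR} implicitly use $\var h^{n,2}_{R,\lambda}=\int_{B_R^2}F_{n,\lambda}^{2}$. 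Under the latter convention the $\tfrac12\var h^{n,2}_{R,\lambda}$ subtracted in the statement matches the $\ell=1$ term of the expansion exactly, and your tail estimate then proves precisely the claimed bound. The correct resolution is therefore the one you mention last --- read the coefficient so that it matches the expansion --- and you should simply drop the erroneous ``$O(R\,m_n(B_R))\lesssim m_n(B_R)$'' sentence rather than try to argue around it.
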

	
	\begin{proof}
		We only provide a sketch of the proof: we can argue in the case $R\to \infty$
		just like we did for obtaining \autoref{cor:sqrtlambda}, that is using
		the arguments of \autoref{lem:momentlambda} on single polyspectra applied to
		power series in \autoref{lem:taillambda}.
		The thesis can be rephrased as
		\begin{equation*}
		\int_{B_R^2} g(F_{n,\lambda}(d(x,y))) dm_n^{\otimes 2}(x,y)
		\lesssim_{n,\lambda} C^{n,4}_{R,\lambda}, 
		\quad g(x)=\sum_{\ell=2}^\infty \binom{2\ell}{\ell}\frac{x^{2\ell}}{4^\ell},
		\quad x\in (-1,1).
		\end{equation*}
		Close to $d(x,y)=0$ we can apply an expansion
		deduced from \eqref{eq:hypergeomDEF} and \eqref{eq:Fhypergeom},
		\begin{equation*}
		F_{n,\lambda}(r)=1-\frac{\lambda}{2n}\sinh(r)^2+O_{n,\lambda}(\sinh(r)^4)
		\end{equation*}
		(keep in mind that $\lambda$ here is fixed),
		from which convergence of the integral follows (thanks to \autoref{rmk:Fnever1}).
		We can then restrict the integration domain to $D_R=B_R^2\setminus\set{d(x,y)>1}$,
		since the latter provides the relevant contribution as $R\to \infty$.
		By \autoref{lem:Fasymp}, $0<C_{n,\lambda}\leq F_{n,\lambda}(d(x,y))\leq C_{n,\lambda}'<1$ 
		for all $(x,y)\in D_R$, and moreover $F_{n,\lambda}(r)\lesssim_{n,\lambda}e^{-\sigma r}$
		as $r\to\infty$; this, together with the fact that $x^{-4}g(x)$ is bounded on compacts of $[0,1)$, 
		leads to the desired estimate with a direct computation.
	\end{proof}
	
	\begin{lemma}\label{lem:contractionsr}
		Let $n\geq 2$, $R>0$, $p,q\in\N_0$. As $R\to\infty$ it holds
		\begin{multline}\label{eq:contractionbigR}
		\int_{B_R^4}
		F_{n,\lambda}(d(x,y))^p F_{n,\lambda}(d(y,z))^q\\ 
		\cdot F_{n,\lambda}(d(z,w))^p F_{n,\lambda}(d(w,x))^q dm_n^{\otimes 4}(x,y,z,w)\\
		=
		\begin{cases}
		o_\lambda(R^2m_n(B_R)^2), \,\, \text{if }p=q=1,\\
		o_\lambda(m_n(B_R)^2), \quad \text{if }p+q>2.
		\end{cases}
		\end{multline}
	\end{lemma}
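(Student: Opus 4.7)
The plan is to replicate the structure of the proof of \autoref{lem:contractionslambda}, adapted to the large-$R$ regime (with $\lambda$ fixed). I will assume throughout $p,q\geq 1$ (the cases relevant to the Wiener-chaos contractions) and denote by $I(R)$ the left-hand side of \eqref{eq:contractionbigR}.

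First, I apply AM--GM to the pair of factors meeting at the vertex $w$,
\begin{equation*}
|F_{n,\lambda}(d(z,w))|^p\,|F_{n,\lambda}(d(w,x))|^q \;\leq\; \tfrac12\bigl(|F_{n,\lambda}(d(z,w))|^{2p} + |F_{n,\lambda}(d(w,x))|^{2q}\bigr),
\end{equation*}
which decouples $w$ from one of $x,z$ in each summand. Using polar coordinates centered at an arbitrary $p_0\in B_R$ and extending the radial range to $[0,2R]$, one gets the uniform bound
\begin{equation*}
\int_{B_R}|F_{n,\lambda}(d(p_0,w))|^{k}\,dm_n(w) \;\leq\; C_n\, J_{k}(R), \qquad J_{k}(R):= \int_0^{2R}|F_{n,\lambda}(r)|^{k}\sinh(r)^{n-1}\,dr.
\end{equation*}
Integrating $w$ out first, then integrating the decoupled variable ($x$ or $z$) against a single $|F_{n,\lambda}|$-factor to produce another $J_p$ (or $J_q$), and finally bounding the remaining 2-fold integral $\int_{B_R^2}|F_{n,\lambda}(d(y,z))|^q\,dm_n^{\otimes 2} \leq m_n(B_R)\,J_q(R)$ (or its $p$-analogue), I arrive, after combining both AM--GM summands, at
\begin{equation*}
I(R) \;\lesssim_{n,\lambda}\; m_n(B_R)\,J_p(R)\,J_q(R)\,\bigl(J_{2p}(R) + J_{2q}(R)\bigr).
\end{equation*}

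The asymptotics of $J_k(R)$ follow directly from the uniform bound \eqref{eq:Funif} and $\sinh(r)^{n-1}\lesssim e^{2\sigma r}$, since $|F_{n,\lambda}(r)|^k\sinh(r)^{n-1}\lesssim_n e^{(2-k)\sigma r}$: one finds $J_1(R)\lesssim_\lambda e^{\sigma R}\simeq m_n(B_R)^{1/2}$, $J_2(R)\lesssim_\lambda R$, and $J_k(R)\lesssim_\lambda 1$ for every $k\geq 3$. Plugging these into the display above and running through the cases: for $p=q=1$ this yields $I(R)\lesssim R\cdot m_n(B_R)^2 = o(R^2 m_n(B_R)^2)$; for $p+q>2$, the worst case (up to the symmetry $p\leftrightarrow q$) is $(p,q)=(1,2)$, producing $I(R)\lesssim R^2 m_n(B_R)^{3/2} = o(m_n(B_R)^2)$, while all remaining choices of $p,q$ give strictly smaller orders.

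The only subtle point will be the tightness of the case $p=q=1$: the bound $R\cdot m_n(B_R)^2$ lies only a factor $R^{-1}$ below the claimed target $R^2\cdot m_n(B_R)^2$, so in particular the naive Cauchy--Schwarz estimate $I(R)\leq C^{n,2p}_{R,\lambda}\cdot C^{n,2q}_{R,\lambda}$ matches the target exactly for $p=q=1$ and fails to yield the required $o$-statement. The AM--GM decoupling is essential here: it handles the loop structure sharply enough to gain the missing factor of $R^{-1}$, by integrating the ``free'' variables against 1-dimensional integrals $J_k(R)$ rather than over the full 2-dimensional domains appearing in the Hilbert--Schmidt splitting.
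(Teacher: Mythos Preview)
Your overall strategy (AM--GM at the vertex $w$, then decouple into one-dimensional radial integrals) is the same as the paper's, but there is a concrete slip that breaks the argument in every case with $p=1$, and in particular in the delicate case $p=q=1$ that you yourself single out.

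You define $J_k(R)=\int_0^{2R}|F_{n,\lambda}(r)|^k\sinh(r)^{n-1}\,dr$ and then assert $J_1(R)\lesssim_\lambda e^{\sigma R}\simeq m_n(B_R)^{1/2}$. This is false: since $|F_{n,\lambda}(r)|\sinh(r)^{n-1}\lesssim e^{-\sigma r}e^{2\sigma r}=e^{\sigma r}$, integrating up to $2R$ gives $J_1(R)\simeq_\lambda e^{2\sigma R}\simeq m_n(B_R)$ (the oscillations of $F_{n,\lambda}$ are lost once you take absolute values, so this order is sharp). Feeding the correct $J_1$ into your master bound $I(R)\lesssim m_n(B_R)\,J_pJ_q(J_{2p}+J_{2q})$ yields, for $p=q=1$, $I(R)\lesssim e^{2\sigma R}\cdot e^{2\sigma R}\cdot e^{2\sigma R}\cdot R=R\,e^{6\sigma R}$, which is \emph{not} $o(R^2e^{4\sigma R})$; and for $(p,q)=(1,2)$ one gets $R^2e^{4\sigma R}$, which is \emph{not} $o(e^{4\sigma R})$. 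So the naive enlargement $B_R\subset B_{2R}(p_0)$ is too crude when the exponent equals $1$.

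The fix is to replace $J_1$ by the sharper uniform bound
\[
\sup_{y\in B_R}\int_{B_R}|F_{n,\lambda}(d(x,y))|\,dm_n(x)\ \lesssim_{n,\lambda}\ e^{\sigma R},
\]
which is what the paper effectively uses (it writes the radial integrals with upper limit $R$). To see why this holds: in polar coordinates centred at $y$, with $R(\rho,\vartheta)$ as in Subsection~\ref{ssec:changeofvariables}, one has $\int_{B_R}|F_{n,\lambda}(d(x,y))|\,dm_n(x)\lesssim\int_{S^{n-1}}e^{\sigma R(\rho,\vartheta)}\,d\varsigma(\vartheta)$, and by Cauchy--Schwarz over $\vartheta$ this is at most $C_n\bigl(\int_{S^{n-1}}e^{2\sigma R(\rho,\vartheta)}\,d\varsigma\bigr)^{1/2}$. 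But the latter angular integral is $\simeq_n m_n(B_R)$ for \emph{every} $y\in B_R$, since $e^{2\sigma T}\lesssim_n 1+\int_0^T\sinh(s)^{n-1}ds$ and the double integral $\int_{S^{n-1}}\int_0^{R(\rho,\vartheta)}\sinh(s)^{n-1}ds\,d\varsigma$ is exactly $C_n^{-1}m_n(B_R)$. With this corrected ingredient your chain of inequalities recovers the paper's bounds $I(R)\lesssim R\,e^{4\sigma R}$ for $p=q=1$ and $I(R)\lesssim R^2e^{3\sigma R}$ for $p=1<q$, both of which suffice.
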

	
	\begin{proof}
		As in the proof of \autoref{lem:contractionslambda}, we denote by $I(R)=I_{n,\lambda,p,q}(R)$ the left-hand side of \eqref{eq:contractionbigR},
		we assume without loss of generality $p\leq q$ and drop dependencies of constants on $n,\lambda,p,q$
		to lighten notation, and we apply GM-QM inequality to the last two factors of the integrand:
		\begin{equation*}
		I(R) \lesssim \int_{B_R^4}
		|F_{n,\lambda}(d(x,y))|^p |F_{n,\lambda}(d(y,z))|^q
		|F_{n,\lambda}(d(z,w))|^{2p} dm_n^{\otimes 4}(x,y,z,w)+\dots,
		\end{equation*}
		where dots stand for an analogous term with higher exponents. 
		Integration variables can now be decoupled: moving to polar coordinates and 
		recalling the uniform bound $|F_{n,\lambda}(r)|\leq e^{-\sigma r}$, $r\geq 0$,
		we control
		\begin{multline*}
		I(R) \lesssim m_n(B_R) \pa{\int_0^R e^{-p\sigma r}\sinh(r)^\sigma dr}\\
		\cdot \pa{\int_0^R e^{-q\sigma r}\sinh(r)^\sigma dr}
		\pa{\int_0^R e^{-2p\sigma r}\sinh(r)^\sigma dr}.
		\end{multline*}
		Since $p\geq 1$, the last factor is always $O(R)$ as $R\to\infty$.
		Similarly, the other integrals into parentheses are $O(R)$ or $O(e^{\sigma R})$
		respectively when the exponent $p,q$ equals 1 or not.
		Thus, recalling that $m_n(B_R)\simeq e^{2\sigma R}$ as $R\to \infty$,
		our estimate suffices to conclude the thesis, since it implies:
		\begin{equation*}
		I(R)\lesssim 
		\begin{cases}
		R e^{4\sigma R} &\text{if } p=q=1\\
		R^2 e^{3\sigma R} &\text{if } 1=p<2\leq q\\
		e^{2\sigma R} &\text{if } p,q\geq 2\\
		\end{cases}
		=
		\begin{cases}
		o(R^2 e^{4\sigma R}) &\text{if } p=q=1\\
		o(e^{4\sigma R}) &\text{otherwise }\\
		\end{cases}.	
		\qedhere
		\end{equation*}
	\end{proof}

	\appendix
	
	\section{Repository of Formulae for Special Functions}\label{sec:specialfunctions}
	
	\subsection{Bessel Functions}
	For $\nu\in [0,\infty)$, the Bessel function of first kind $J_\nu(z)$ is defined as,
	\cite[10.2.2]{Olver2010},
	\begin{equation}\label{eq:besselDEF}
	J_\nu(z)=\left(\frac{z}{2}\right)^\nu \sum_{k=0}^{\infty} \frac{\left(-z^2/4\right)^k}{k ! \Gamma(\nu+k+1)},
	\end{equation}
	where the power series is convergent and analytic on the whole $\C$,
	and when $\nu$ is not an integer the principal branch of $z^\nu$ (with branch cut $(-\infty,0]$)
	is customarily chosen for the prefactor.
	The analytic function $z^{-\nu} J_\nu(z)$ provides a representation
	for the Fourier transform of the sphere as reported in \autoref{eq:berrycov}.
	
	The modified Bessel function of first kind $I_\nu$ (often appearing instead of $J_\nu$ in the references below)
	is simply obtained from $J_\nu$ by the relation $I_\nu(z)=i^{\pm \nu}J_\nu(i^{\mp 1}z)$.
	
	\subsection{Hypergeometric Functions}
	For $a,b,c\in\C$, the hypergeometric function ${}_2F_1(a,b,c,z)$ is defined as the analytic continuation of
	\begin{gather}\label{eq:hypergeomDEF}
	{}_2F_1(a,b,c,z)=\sum_{n=0}^\infty \frac{(a)_n(b_n)}{(c)_nn!}z^n,\quad |z|<1,\\
	\nonumber (a)_0=1,\quad (a)_n=a(a+1)\cdots(a+n-1),
	\end{gather}
	on $\C\setminus [1,\infty]$, \cite[15.2.1]{Olver2010}.
	The following linear transformation properties hold: for $z\notin [0,\infty]$, \cite[15.8.2]{Olver2010},
	\begin{multline}\label{eq:hypergeomlin1}
	\frac{\sin(\pi(b-a))}{\pi\Gamma(c)}{}_2F_1(a,b,c,z)\\
	=\frac{(-z)^{-a}}{\Gamma(b)\Gamma(c-a)\Gamma(a-b+1)}{}_2F_1(a,a-c+1,a-b+1,1/z)\\
	-\frac{(-z)^{-b}}{\Gamma(a)\Gamma(c-b)\Gamma(b-a+1)}{}_2F_1(b,b-c+1,b-a+1,1/z),
	\end{multline}
	and for $z\notin [1,\infty]$, \cite[15.8.1]{Olver2010},
	\begin{multline}\label{eq:hypergeomlin2}
	{}_2F_1(a,b,c,z)=(1-z)^{-a}{}_2F_1(a,c-b,c,z/(z-1))\\
	=(1-z)^{-b}{}_2F_1(c-a,b,c,z/(z-1))
	=(1-z)^{c-a-b}{}_2F_1(c-a,c-b,c,z).
	\end{multline}
	
	The hypergeometric equation
	\begin{equation}\label{eq:hypergeomODE}
	z(1-z) \frac{d^2 f}{d z^2}+(c-(a+b+1) z) \frac{d f}{d z}-a b f=0
	\end{equation}
	is a complex-valued ODE with regular singularities at $z=0,1,\infty$. 
	When $c,a-b,c-a-b$ are not integers, the above power series provides two linearly independent solutions
	close to $z=0$,
	\begin{equation*}
	f_1(z)={}_2F_1(a,b,c,z),\quad f_2(z)=z^{1-c}{}_2F_1(a-c+1,b-c+1,2-c,z),
	\end{equation*}
	and the aforementioned analytic extension of ${}_2F_1(a,b,c,z)$ solves \ref{eq:hypergeomODE} on $(-\infty,0]$.
	
	\subsection{Relations with Legendre Functions}
	For a particular choice of parameters, the hypergeometric function provides a representation
	for Legendre functions (we are interested in particular to those of the first kind).
	For $x\in (0,\infty)$, $\mu,\nu\in \C$, \cite[14.3.6]{Olver2010} and \cite[pag. 122, (3)]{Erdelyi1981},
	\begin{equation}\label{eq:legendreDEF}
	P^\mu_\nu(x)=\frac1{\Gamma(1-\mu)}\pa{\frac{x+1}{x-1}}^{\mu/2}{}_2F_1(\nu+1,-\nu,1-\mu,(1-x)/2).
	\end{equation}
	We have the following integral representation for the Legendre function $P^\mu_\nu$ when $\re\mu<1/2$, $r>0$,
	\cite[pag. 156, (7)]{Erdelyi1981},
	\begin{equation}\label{eq:legendreINT}
	P^\mu_\nu(\cosh(r))=\frac{2^\mu}{\sqrt\pi \sinh(r)^\mu \Gamma(1/2-\mu)}
	\int_0^\pi \frac{\pa{\cosh r+\sinh r \cos t}^{\mu+\nu}}{(\sin t)^{-2\mu}}dt.
	\end{equation}
	For fixed $\mu\in\R^+$, as $\nu\to\infty$, \cite[14.15.13]{Olver2010}
	\begin{equation}\label{eq:legendreandbessel}
	P_\nu^{-\mu}(\cosh r)=\frac{1}{(i\nu)^\mu}\left(\frac{r}{\sinh r}\right)^{1 / 2} J_\mu\left(\left(\nu+\frac{1}{2}\right) ir\right)\left(1+O\left(\frac{1}{\nu}\right)\right)
	\end{equation}
	uniformly in $r\in (0,\infty)$.

\bibliographystyle{plain}

\end{document}